
\documentclass[12pt,reqno]{amsart}
\usepackage{fullpage}

\newtheorem{theorem}{Theorem}[section]
\newtheorem{lemma}[theorem]{Lemma}

\newtheorem{conj}[theorem]{Conjecture}

\newtheorem{prob}[theorem]{Problem}
\usepackage{graphicx}
\usepackage{color}
\usepackage[dvipsnames]{xcolor}
\usepackage{subfigure}
\usepackage{amssymb}
\usepackage{amsmath,mathrsfs}
\usepackage{colonequals}
\usepackage{hyperref}

\theoremstyle{definition}
\newtheorem{definition}[theorem]{Definition}

\newtheorem{remark}[theorem]{Remark}


\renewcommand{\subset}{\subseteq}
\renewcommand{\supset}{\supseteq}
\renewcommand{\epsilon}{\varepsilon}

\newcommand{\abs}[1]{\left|#1\right|}                   
\newcommand{\vnorm}[1]{\left\|#1\right\|}    
\newcommand{\vnormf}[1]{\|#1\|}                         
\newcommand{\vnormt}[1]{\left\|#1\right\|}    

\newcommand{\Z}{\mathbb{Z}}                             

\newcommand{\E}{\mathbb{E}}

\renewcommand{\d}{\mathrm{d}}

\newcommand{\R}{\mathbb{R}}


\newcommand{\embolden}[1]{\textbf {#1}}
\newcommand{\redA}{\Sigma}
\newcommand{\redb}{\partial^{*}}

\newcommand{\sdimn}{n}
\newcommand{\adimn}{n+1}

\begin{document}

\title{Three Candidate Plurality is Stablest for Small Correlations}

\author{Steven Heilman}
\author{Alex Tarter}
\address{Department of Mathematics, University of Southern California, Los Angeles, CA 90089-2532}
\email{stevenmheilman@gmail.com}
\email{atarter@usc.edu}
\date{\today}
\thanks{S. H. is Supported by NSF Grant CCF 1911216}

\begin{abstract}
Using the calculus of variations, we prove the following structure theorem for noise stable partitions: a partition of $n$-dimensional Euclidean space into $m$ disjoint sets of fixed Gaussian volumes that maximize their noise stability must be $(m-1)$-dimensional, if $m-1\leq n$.  In particular, the maximum noise stability of a partition of $m$ sets in $\mathbb{R}^{n}$ of fixed Gaussian volumes is constant for all $n$ satisfying $n\geq m-1$.  From this result, we obtain:
\begin{itemize}
\item[(i)] A proof of the Plurality is Stablest Conjecture for $3$ candidate elections, for all correlation parameters $\rho$ satisfying $0<\rho<\rho_{0}$, where $\rho_{0}>0$ is a fixed constant (that does not depend on the dimension $n$), when each candidate has an equal chance of winning.
\item[(ii)] A variational proof of Borell's Inequality (corresponding to the case $m=2$).
\end{itemize}
The structure theorem answers a question of De-Mossel-Neeman and of Ghazi-Kamath-Raghavendra.  Item (i) is the first proof of any case of the Plurality is Stablest Conjecture of Khot-Kindler-Mossel-O'Donnell (2005) for fixed $\rho$, with the case $\rho\to1^{-}$ being solved recently.  Item (i) is also the first evidence for the optimality of the Frieze-Jerrum semidefinite program for solving MAX-3-CUT, assuming the Unique Games Conjecture.  Without the assumption that each candidate has an equal chance of winning in (i), the Plurality is Stablest Conjecture is known to be false.
\end{abstract}


\maketitle
\setcounter{tocdepth}{1}
\tableofcontents
%
%
%
%
%

\section{Introduction}\label{secintro}

\subsection{An Informal Introduction}

A \textbf{voting method} or \textbf{social choice function} with $m$ candidates and $n$ voters is a function
$$f\colon\{1,\ldots,m\}^{n}\to\{1,\ldots,m\}.$$
From the social choice theory perspective, the input of the function $f$ is a list of votes of $n$ people who are choosing between $m$ candidates.  Each of the $m$ candidates is labelled by the integers $1,\ldots,m$.   If the votes are $x\in\{1,\ldots,m\}^{n}$, then $x_{i}$ denotes the vote of person $i\in\{1,\ldots,n\}$ for candidate $x_{i}\in\{1,\ldots,m\}$.   Given the votes $x\in\{1,\ldots,m\}^{n}$, $f(x)$ is interpreted as the winner of the election.

It is both natural and desirable to find a voting method whose output is most likely to be unchanged after votes are randomly altered.  One could imagine that malicious third parties or miscounting of votes might cause random vote changes, so we desire a voting method $f$ whose output is stable to such changes.  In addition to voting motivations, finding a voting method that is stable to noise has applications to the Unique Games Conjecture \cite{khot07,mossel10,khot15}, to semidefinite programming algorithms such as MAX-CUT \cite{khot07,isaksson11}, to learning theory \cite{feldman12}, etc.  For some surveys on this and related topics, see  \cite{odonnell14b,khot10b,heilman20b}.

The output of a constant function $f$ is never altered by changes to the votes.  Also, if the function $f$ only depends on one of its $n$ inputs, then the output of $f$ is rarely changed by independent random changes to each of the votes.  In these cases, the function $f$ is rather ``undemocratic'' from the perspective of social choice theory.  In the case of a constant function, the outcome of the election does not depend at all on the votes.  In the case of a function that only depends on one of its inputs, the outcome of the election only depends on one voter (so $f$ is called a dictatorship function).

Among ``democratic'' voting methods, it was conjectured in \cite{khot07} and proven in \cite{mossel10} that the majority voting method is the voting method that best preserves the outcome of the election.  Below is an informal statement of the main result of \cite{mossel10}.

\begin{theorem}[\embolden{Majority is Stablest, Informal Version}, {\cite[Theorem 4.4]{mossel10}}]\label{misinf}
Suppose we run an election with a large number $n$ of voters and $m=2$ candidates.  We make the following assumptions about voter behavior and about the election method.
\begin{itemize}
\item Voters cast their votes randomly, independently, with equal probability of voting for either candidate.
\item Each voter has a small influence on the outcome of the election.  (That is, all influences from Definition \ref{infdef} are small for the voting method.)
\item Each candidate has an equal chance of winning the election.
\end{itemize}
Under these assumptions, the majority function is the voting method that best preserves the outcome of the election, when votes have been corrupted independently each with probability less than $1/2$.
\end{theorem}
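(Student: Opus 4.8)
Although this statement is phrased qualitatively, what one actually proves is a quantitative inequality: for each fixed $\rho\in(0,1)$ there is a function $\varepsilon_{\rho}(\tau)\to 0$ as $\tau\to 0^{+}$ such that every $f\colon\{-1,1\}^{n}\to[-1,1]$ with $\E f=0$ and $\max_{1\le i\le n}\mathrm{Inf}_{i}(f)\le\tau$ satisfies $\mathrm{Stab}_{\rho}(f)\le 1-\tfrac{2}{\pi}\arccos\rho+\varepsilon_{\rho}(\tau)$, where $\mathrm{Stab}_{\rho}(f)=\E[f(X)f(Y)]$ for a $\rho$-correlated pair $(X,Y)$ of uniform points of $\{-1,1\}^{n}$, and the constant $1-\tfrac{2}{\pi}\arccos\rho$ is attained in the limit $n\to\infty$ by $f=\mathrm{Maj}_{n}$. (Writing a per-vote flip probability as $p=(1-\rho)/2$, the hypothesis ``corrupted with probability less than $1/2$'' is exactly $\rho>0$.) The plan is to combine three ingredients: an invariance principle reducing the Boolean optimization to a Gaussian one; Borell's inequality, which solves the Gaussian optimization; and a central-limit computation showing that majority saturates the resulting bound.

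For the reduction, first apply a vanishingly small extra amount of noise, replacing $f$ by $T_{1-\delta}f$: this perturbs $\mathrm{Stab}_{\rho}(f)$, $\E f$, and the influences by amounts controlled by $\delta$, while making the Fourier tail above degree $k\asymp\delta^{-1}\log(1/\delta)$ negligible, so up to error terms one may assume $f$ is a multilinear polynomial of degree $\le k$. Now invoke the invariance principle of Mossel--O'Donnell--Oleszkiewicz: for a multilinear polynomial $p$ of degree $\le k$ with all influences $\le\tau$, the law of $p(X)$ with $X$ uniform on $\{-1,1\}^{n}$ and the law of $p(G)$ with $G$ a standard Gaussian vector are close, with error of the form $C\,k\,9^{k}\,\tau^{\Omega(1)}$ against smooth test functionals. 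This is proved by a Lindeberg-type hybrid argument: interpolate between $X$ and $G$ by swapping coordinates one at a time, Taylor expand the (smoothed) test functional to third order at each swap, and bound the remainder via hypercontractivity of low-degree polynomials, which converts it into a power of the influence of the swapped coordinate; summing the $n$ per-coordinate errors against the single bound $\tau$ is what keeps the total error dimension-free. Applying this to the pair $(X,Y)$ and the functional $(u,v)\mapsto p(u)p(v)$ yields $\mathrm{Stab}_{\rho}(f)=\mathrm{Stab}_{\rho}^{\mathrm{G}}(\widetilde f)+\varepsilon_{\rho}(\tau)$ and $\E\widetilde f=\E f+\varepsilon(\tau)$, where $\widetilde f\colon\R^{n}\to[-1,1]$ is obtained from $f$ by replacing each Fourier--Walsh character with the corresponding product of Hermite polynomials and $\mathrm{Stab}_{\rho}^{\mathrm{G}}$ is Ornstein--Uhlenbeck noise stability.

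It remains to solve the Gaussian problem and exhibit a matching example. By Borell's inequality --- the $m=2$ case, listed as item (ii) above --- among all $g\colon\R^{n}\to[-1,1]$ with $\E g=0$ the quantity $\mathrm{Stab}_{\rho}^{\mathrm{G}}(g)$ is maximized by $g=\mathbf{1}_{H}-\mathbf{1}_{H^{c}}$ for a halfspace $H$ through the origin, with value $1-\tfrac{2}{\pi}\arccos\rho$; feeding this into the previous step gives the claimed upper bound. For tightness, $\mathrm{Maj}_{n}$ is balanced, has $\mathrm{Inf}_{i}(\mathrm{Maj}_{n})=\Theta(n^{-1/2})$ for every $i$, and by the central limit theorem (Sheppard's formula) $\mathrm{Stab}_{\rho}(\mathrm{Maj}_{n})\to 1-\tfrac{2}{\pi}\arccos\rho$; hence no balanced low-influence voting method asymptotically beats majority, which is the assertion. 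The main obstacle is the invariance principle: one must make the coordinate-swap error both explicit and \emph{uniform in $n$}, which forces a delicate balance between the degree truncation --- whose $9^{k}$-type factor must be overwhelmed by $\tau^{\Omega(1)}$ --- and the hypercontractive estimate that allows the $n$ per-coordinate errors to be summed against one small influence bound.
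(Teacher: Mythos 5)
This theorem is not proved in the paper at all: it is quoted from Mossel--O'Donnell--Oleszkiewicz \cite[Theorem 4.4]{mossel10} (both in this informal form and in the formal form of Theorem \ref{misg}), and the present paper only uses it as background. Your sketch accurately reproduces the proof strategy of that cited reference --- smoothing/degree truncation, the Lindeberg-style invariance principle with hypercontractivity giving a dimension-free error in terms of the maximal influence, Borell's Gaussian isoperimetric inequality for the extremal value $1-\tfrac{2}{\pi}\arccos\rho=\tfrac{2}{\pi}\arcsin\rho$, and Sheppard's formula for the matching asymptotics of majority --- so it is correct and follows essentially the same route as the source being cited.
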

We say a vote $x_{i}\in\{1,2\}$ is corrupted with probability $0<\delta<1$ when, with probability $\delta$, the vote $x_{i}$ is changed to a uniformly random element of $\{1,2\}$, and with probability $1-\delta$, the vote $x_{i}$ is unchanged.

For a formal statement of Theorem \ref{misinf}, see Theorem \ref{misg} below.

The primary interest of the authors of  \cite{khot07} in Theorem \ref{misinf} was proving optimal hardness of approximation for the MAX-CUT problem.  In the MAX-CUT problem, we are given a finite undirected graph on $n$ vertices, and the objective of the problem is to find a partition of the vertices of the graph into two sets that maximizes the number of edges going between the two sets.  The MAX-CUT problem is MAX-SNP hard, i.e. if $P\neq NP$, there is no polynomial time (in $n$) approximation scheme for this problem.  Nevertheless, there is a randomized polynomial time algorithm  \cite{goemans95} that achieves, in expectation, at least $.87856\ldots$ times the maximum value of the MAX-CUT problem.  This algorithm uses semidefinite programming.  Also, the exact expression for the $.87856\ldots$ constant is
$$\min_{-1\leq\rho\leq 1}\frac{2}{\pi}\frac{\arccos(\rho)}{1-\rho}=.87856\ldots$$

The authors of \cite{khot07} showed that, if the Unique Games Conjecture is true, then Theorem \ref{misinf} implies that the Goemans-Williamson algorithm's .87856$\ldots$ constant of approximation cannot be increased.  Assuming the validity of the Unique Games Conjecture is a fairly standard in complexity theory, though the conjecture remains open.  See \cite{odonnell14b,khot10b} and the references therein for more discussion on this conjecture, and see \cite{khot18} for some recent significant progress.

Theorem \ref{misinf} (i.e. Theorem \ref{misg}) gives a rather definitive statement on the two candidate voting method that is most stable to corruption of votes.  Moreover, the applcation of Theorem \ref{misinf} gives a complete understanding of the optimal algorithm for solving MAX-CUT, assuming the Unique Games Conjecture.  Unfortunately, the proof of Theorem \ref{misinf} says nothing about elections with $m>2$ candidates.  Moreover, Theorem \ref{misinf} fails to prove optimality of the Frieze-Jerrum \cite{frieze95} semidefinite programming algorithm for the MAX-$m$-CUT problem.  In the MAX-$m$-CUT problem, we are given a finite undirected graph on $n$ vertices, and the objective of the problem is to find a partition of the vertices of the graph into $m$ sets that maximizes the number of edges going between the two sets.  So, MAX-CUT is the same as MAX-2-CUT.

In order to prove the optimality of the Frieze-Jerrum \cite{frieze95} semidefinite programming algorithm for the MAX-$m$-CUT problem, one would need an analogue of Theorem \ref{misinf} for $m>2$ voters, where the plurality function replaces the majority function.  For this reason, it was conjectured \cite{khot07,isaksson11} that the plurality function is the voting method that is most stable to independent, random vote corruption.

\begin{conj}[\embolden{Plurality is Stablest, Informal Version}, {\cite{khot07}, \cite[Conjecture 1.9]{isaksson11}}]\label{pisinf}
Suppose we run an election with a large number $n$ of voters and $m\geq3$ candidates.  We make the following assumptions about voter behavior and about the election method.
\begin{itemize}
\item Voters cast their votes randomly, independently, with equal probability of voting for each candidate.
\item Each voter has a small influence on the outcome of the election.  (That is, all influences from Definition \ref{infdef} are small for the voting method.)
\item Each candidate has an equal chance of winning the election.
\end{itemize}
Under these assumptions, the plurality function is the voting method that best preserves the outcome of the election, when votes have been corrupted independently each with probability less than $1/2$.
\end{conj}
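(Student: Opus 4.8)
The plan is to convert this discrete statement into a continuous optimization problem over partitions of Gaussian space, and then attack that problem with the calculus of variations; the full conjecture being out of reach, the goal is to establish it for $m=3$ and $0<\rho<\rho_{0}$. The first step is the standard reduction, in the spirit of the proof of Theorem \ref{misinf}: by an invariance principle, a voting method $f\colon\{1,\dots,m\}^{n}\to\{1,\dots,m\}$ all of whose influences are small, and for which each candidate wins with probability $1/m$, has $\rho$-noise stability arbitrarily close to that of a measurable partition $\Omega_{1},\dots,\Omega_{m}$ of some Euclidean space $\R^{N}$, each $\Omega_{i}$ of Gaussian measure $1/m$, where the relevant quantity is $\sum_{i=1}^{m}\langle \mathbf{1}_{\Omega_{i}},\,T_{\rho}\mathbf{1}_{\Omega_{i}}\rangle$ with $T_{\rho}$ the Ornstein--Uhlenbeck operator. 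In the same limit, the plurality function corresponds to the ``simplex partition'' of $\R^{m-1}$ into $m$ congruent cones with common vertex at the origin. So it suffices to show that, among all equal-measure partitions of $\R^{N}$ for every $N\geq m-1$, the simplex partition maximizes $\rho$-noise stability; this is exactly the content of the structure theorem advertised in the abstract, and it is where the variational argument is needed.

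The second step is to prove the structure theorem. Existence of a maximizer follows from a compactness argument on indicator functions together with the smoothing property of $T_{\rho}$, and the volume constraints are closed. A maximizer satisfies an Euler--Lagrange condition: along each interface $\partial\Omega_{i}\cap\partial\Omega_{j}$ the function $T_{\rho}\mathbf{1}_{\Omega_{i}}-T_{\rho}\mathbf{1}_{\Omega_{j}}$ vanishes (up to the Lagrange multipliers), which is regular enough to bootstrap the interfaces to real-analytic hypersurfaces meeting in a controlled way. One then computes the first variation under volume-preserving normal perturbations of the interfaces (stationarity balances the functions $T_{\rho}\mathbf{1}_{\Omega_{i}}$ along every interface) and, crucially, the second variation. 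The key idea is to test with constant vector fields $v\equiv e$, $e\in\R^{N}$: translation is volume-preserving to second order, and the induced second variation of noise stability can be written down explicitly. If the maximizing partition genuinely depended on more than $m-1$ coordinate directions, one could choose $e$ orthogonal to the ``essential'' directions so that this second variation is strictly positive, contradicting maximality. Hence the maximizer is a cylinder over a partition of $\R^{m-1}$, i.e.\ it is $(m-1)$-dimensional, and the maximum noise stability is constant for all $N\geq m-1$.

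The third step specializes to $m=3$. Now the optimal partition lives in $\R^{2}$ and consists of three sectors meeting along analytic curves, subject to the balancing Euler--Lagrange condition and to the angle conditions forced at triple points. For $\rho$ in a fixed interval $(0,\rho_{0})$ one expands in $\rho$: $T_{\rho}=I+\rho L+O(\rho^{2})$ with $L$ the generator, and the stability functional, the stationarity condition, and the second-variation inequality all admit convergent $\rho$-expansions. To leading order the problem is a perturbation of the degenerate $\rho=0$ case; a careful perturbative analysis, combining the $120^{\circ}$ angle condition at the triple point with the balancing condition, singles out the three-sector simplex partition through the origin as the unique maximizer. Unwinding the reduction of the first step then yields Conjecture \ref{pisinf} for $m=3$ and $0<\rho<\rho_{0}$, with each candidate equally likely to win, and also recovers Borell's inequality when $m=2$.

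The main obstacle I expect is the second-variation/dimension-reduction step: it requires a clean and genuinely usable formula for the second variation of noise stability under an arbitrary volume-preserving deformation, plus enough regularity of the interfaces (including at their singular set) to justify the integrations by parts and the translation test. The perturbative analysis for $m=3$ is the secondary difficulty: one must show the $\rho$-expansions converge uniformly on $(0,\rho_{0})$ and that the leading-order problem has the simplex partition as its \emph{only} solution, ruling out competitors with several triple points or curved interfaces --- and it is precisely this uniform control that produces the explicit constant $\rho_{0}$.
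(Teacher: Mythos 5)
Your architecture coincides with the paper's: an invariance-principle reduction to the Standard Simplex Problem, existence and regularity of a maximizing partition, first and second variation formulas, translations as test vector fields to force the maximizer to be a cylinder over an $(m-1)$-dimensional partition, and finally the planar small-$\rho$ case for $m=3$. But there is a genuine gap at exactly the step you flag as your ``main obstacle,'' and that step is the entire content of the argument. You assert that if the maximizer depended on more than $m-1$ directions, one could pick a translation direction making the second variation ``strictly positive,'' but you give no reason for positivity. This cannot follow from soft considerations: for the Euclidean heat content the analogous second variation under translation is identically zero, since translation is a symmetry (see the Remark following the proof outline). What makes the Gaussian case work is the identity \eqref{zero5} (Lemmas \ref{treig} and \ref{treig2}): $f=\langle v,N\rangle$ is an \emph{almost eigenfunction} of the kernel operator $S$ appearing in the second variation, $S(f)=\frac{1}{\rho}f\,\vnormf{\overline{\nabla}T_{\rho}1_{\Omega}}$, obtained by combining the first-order condition $\overline{\nabla}T_{\rho}1_{\Omega}=-N\vnormf{\overline{\nabla}T_{\rho}1_{\Omega}}$ (whose sign itself requires a second-variation argument) with the divergence theorem. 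Substituting this into the second variation gives $\bigl(\frac{1}{\rho}-1\bigr)\int_{\Sigma}\vnormf{\overline{\nabla}T_{\rho}1_{\Omega}}\langle v,N\rangle^{2}\,\gamma_{\adimn}\,\d x$, and the factor $\frac{1}{\rho}-1>0$ is what forces $\langle v,N\rangle\equiv 0$ for every volume-preserving direction $v$, a subspace of dimension at least $\sdimn+2-m$ by rank--nullity. Also needed, and absent from your sketch, is the strict positivity $\vnormf{\overline{\nabla}T_{\rho}(1_{\Omega_{i}}-1_{\Omega_{j}})}>0$ off a lower-dimensional set (Lemma \ref{lemma7r}); without it, vanishing of the second variation would not force $\langle v,N\rangle=0$.

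Two further points. First, the paper does not redo a perturbative analysis for $m=3$: after dimension reduction it invokes the two-dimensional result of \cite{heilman12}, which is where the constant $\rho_{0}$ comes from; and your proposed expansion $T_{\rho}=I+\rho L+O(\rho^{2})$ is an expansion about $\rho=1$, whereas the relevant regime here is $\rho\to 0^{+}$, where $T_{\rho}$ degenerates to the projection onto constants, so the expansion as written is aimed at the wrong endpoint. Second, be careful with scope: the structure theorem never identifies the maximizer as the simplex partition (that identification is open for $m>3$ and, for $m=3$, is precisely the content of \cite{heilman12}); and the conjecture as stated, for all $m\geq 3$ and all $\rho$, is not established by this route --- only the case $m=3$, $0<\rho<\rho_{0}$, with equal measures, which you do correctly target.
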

We say a vote $x_{i}\in\{1,\ldots,m\}$ is corrupted with probability $0<\delta<1$ when, with probability $\delta$, the vote $x_{i}$ is changed to a uniformly random element of $\{1,\ldots,m\}$, and with probability $1-\delta$, the vote $x_{i}$ is unchanged.

In the case that the probability of vote corruption goes to zero, the first author proved the first known case of Conjecture \ref{pisinf} in \cite{heilman18b}, culminating a series of previous works \cite{colding12a,mcgonagle15,barchiesi16,heilman17,milman18a,milman18b,heilman18}.  Conjecture \ref{pisinf} for all fixed parameters $0<\rho<1$ was entirely open until now.  Unlike the case of the Majority is Stablest (Theorem \ref{misg}), Conjecture \ref{pisinf} \textit{cannot hold} when the candidates have unequal chances of winning the election \cite{heilman14}.  This realization is an obstruction to proving Conjecture \ref{pisinf}.  It suggested that existing proof methods for Theorem \ref{misg} cannot apply to Conjecture \ref{pisinf}.

Nevertheless, we are able to overcome this obstruction in the present work.

\begin{theorem}[\embolden{Main Result, Informal Version}]\label{pis3inf}
There exists $\epsilon>0$ such that Conjecture \ref{pisinf} holds for $m=3$ candidates, for all $n\geq1$, when the probability of a single vote being corrupted is any number in the range $(1/2-\epsilon,1/2)$.
\end{theorem}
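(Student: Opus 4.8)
The plan is to deduce Theorem~\ref{pis3inf} from the structure theorem in three moves: pass to the Gaussian setting, use the structure theorem to collapse the dimension to $m-1=2$, and then solve the resulting planar problem for small $\rho$ by perturbation around the degeneracy at $\rho=0$. First I would invoke the invariance principle of Mossel--O'Donnell--Oleszkiewicz together with its vector-valued refinement \cite{isaksson11} (the reduction by which Majority is Stablest follows from Borell's inequality): it reduces Theorem~\ref{pis3inf} to the continuous statement that, for $m=3$ and every dimension $k\ge1$, among all partitions $\{\Omega_1,\Omega_2,\Omega_3\}$ of $\R^{k}$ with $\gamma(\Omega_i)=1/3$, the noise stability $S_\rho(\{\Omega_i\})=\sum_{i=1}^{3}\int_{\R^{k}}1_{\Omega_i}\,(T_\rho 1_{\Omega_i})\,\mathrm{d}\gamma$ is maximized, for all $0<\rho<\rho_0$, by the ``plurality'' partition of a $2$-plane into the three $120^{\circ}$ sectors about a regular triangle of directions (extended trivially in the remaining coordinates). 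Crucially, $\rho_0$ may not depend on $k$; this is where the structure theorem enters.

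Applying the structure theorem with $m=3$: any maximizer of $S_\rho$ among $3$-part, equal-Gaussian-volume partitions of $\R^{k}$ with $k\ge2$ is supported, after a rotation, on a $2$-dimensional subspace, so the maximal value of $S_\rho$ is independent of $k$ for $k\ge2$ and equals its value at $k=2$; the case $k=1$ embeds isometrically into $k=2$ and is subsumed. Hence it suffices to prove: the propeller maximizes $S_\rho$ among $3$-part, equal-volume partitions of $(\R^2,\gamma)$ for all $0<\rho<\rho_0$, where now $\rho_0$ is an absolute constant.

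For this planar problem, write $1_{\Omega_i}=\tfrac13+g_i$ (so $\sum_i g_i=0$ and $\int g_i\,\mathrm{d}\gamma=0$) and expand in Hermite degrees, using that $T_\rho$ fixes constants and preserves the mean: $S_\rho(\{\Omega_i\})=\tfrac13+\sum_{\ell\ge1}\rho^{\ell}\sum_{i=1}^{3}\|g_i^{=\ell}\|_{L^2(\gamma)}^2$. The degree-one coefficient is $\sum_i\|g_i^{=1}\|^2=\sum_i\|v_i\|^2$, where $v_i:=\int_{\Omega_i}x\,\mathrm{d}\gamma(x)$ and $\sum_i v_i=0$; and for $\rho\le\tfrac12$ the tail is bounded uniformly, $\sum_{\ell\ge2}\rho^{\ell-2}\sum_i\|g_i^{=\ell}\|^2\le\sum_i\|g_i\|_{L^2(\gamma)}^2<1$, so $S_\rho(\{\Omega_i\})=\tfrac13+\rho\sum_i\|v_i\|^2+\rho^2E$ with $0\le E<1$ for every such partition. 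Now $\{\Omega_i\}\mapsto\sum_i\|v_i\|^2$ is exactly the objective of the Propeller Conjecture of Khot--Naor, whose planar case is elementary (an optimal partition may be taken to be a union of origin-cones, reducing to a one-variable optimization over three central angles) and whose maximizer, unique up to rotation and relabeling, is the propeller; write $P^{*}$ for its value. If the theorem failed, there would exist $\rho_j\downarrow0$ and equal-volume partitions $\mathcal P_j$ of $\R^2$, which we may take to be maximizers of $S_{\rho_j}$ (using existence and regularity of maximizers, as in the proof of the structure theorem), with $S_{\rho_j}(\mathcal P_j)\ge S_{\rho_j}(\text{propeller})$; the expansion then forces $\sum_i\|v_i(\mathcal P_j)\|^2\to P^{*}$, and since $\{\Omega_i\}\mapsto\sum_i\|v_i\|^2$ is continuous under weak-$*$ convergence and the equal-volume constraint upgrades the a priori weak-$*$ convergence of the $\mathcal P_j$ to $L^1(\gamma)$ convergence, the uniqueness in the planar Propeller inequality gives $\mathcal P_j\to\text{propeller}$ in $L^1(\gamma)$ after a rotation and relabeling. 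So for large $j$, every $\mathcal P_j$ lies in any prescribed $L^1(\gamma)$-neighborhood of the propeller.

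What remains --- and what I expect to be the main obstacle --- is the \emph{local} statement: there is a fixed $L^1(\gamma)$-neighborhood of the propeller and a $\rho_0>0$ such that, for $0<\rho<\rho_0$, the propeller is the unique maximizer of $S_\rho$ among $3$-part, equal-volume partitions lying in that neighborhood; this contradicts the previous paragraph and closes the argument. By regularity of noise-stable partitions (the same input that the structure theorem uses), such a maximizer has reduced boundary equal to a finite union of smooth arcs meeting at triple points, where the first variation forces $120^{\circ}$ angles and forces $T_\rho 1_{\Omega_i}=T_\rho 1_{\Omega_j}$ along $\partial^{*}\Omega_i\cap\partial^{*}\Omega_j$ (up to Lagrange multipliers for the three volume constraints, equal at the propeller by symmetry). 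The difficulty is the degeneracy: at $\rho=0$ the functional is constant on all equal-volume partitions, so the second variation of $S_\rho$ at the propeller vanishes as $\rho\to0^{+}$, and one must expand the Hessian in powers of $\rho$, identify its first nonzero order, and verify that there it is negative definite transverse to the genuine symmetry direction ($SO(2)$-rotation), using the $120^{\circ}$ triple-point condition to eliminate the spurious neutral directions present for the unconstrained functional. Equivalently, and perhaps more robustly, one shows via an implicit-function argument that the over-determined Euler--Lagrange system, modulo the symmetry group, has the propeller as its only solution near the propeller for all small $\rho$. Taking $\rho_0$ to be the resulting threshold --- an absolute constant, by the dimension reduction --- completes the proof.
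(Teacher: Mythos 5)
Your two reductions are exactly the paper's: the invariance principle of \cite{mossel10,isaksson11} converts Conjecture \ref{prob4} to the Gaussian Problem \ref{prob2}, and Theorem \ref{mainthm1} with $m=3$ collapses the maximizer to $\R^{2}$ with a $\rho_{0}$ independent of dimension. The paper then finishes by \emph{citing} the planar result of \cite{heilman12} (which is where the constant $\rho_{0}=e^{-20\cdot 3^{10^{14}}}$ comes from); you instead attempt to reprove the planar case, and that is where your proposal has a genuine gap. Your Hermite expansion $S_{\rho}=\tfrac13+\rho\sum_{i}\|v_{i}\|^{2}+\rho^{2}E$ with $0\le E\le 2/3$ correctly identifies the degree-one term as the planar Propeller objective, and the compactness argument correctly forces near-maximizers to accumulate at the propeller as $\rho\downarrow 0$. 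But the step you yourself flag as ``the main obstacle'' --- a fixed $L^{1}(\gamma)$-neighborhood of the propeller in which the propeller uniquely maximizes $S_{\rho}$ for all small $\rho$ --- is not an incidental technicality: it is precisely the content of the cited planar theorem, and nothing in your sketch supplies it. The difficulty is real: a competitor partition at $L^{1}$-distance $d$ from the propeller may lose only $O(d^{2})$ (or worse, given the degenerate directions) in the degree-one term while gaining order $\rho\,d$ or $\rho\, d^{2}$ in the tail, so one needs a \emph{quantitative} deficit estimate for the planar propeller inequality whose constants beat the $\rho$-dependence of the tail; the doubly-exponentially small $\rho_{0}$ in \cite{heilman12} reflects how delicate that comparison is, and a soft second-variation or implicit-function argument ``modulo symmetry'' does not obviously close it because the relevant competitors need not be smooth normal graphs over the propeller.

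Two smaller points in the compactness step also need repair. First, weak-$*$ convergence of $(1_{\Omega_{1}^{(j)}},1_{\Omega_{2}^{(j)}},1_{\Omega_{3}^{(j)}})$ to a limit $f$ with values in $\Delta_{3}$ only upgrades to $L^{1}(\gamma)$ convergence after you know the limit is itself a partition (an extreme point); since $\sum_{i}\|\int xf_{i}\,\d\gamma\|^{2}$ is convex, its maximum over the weak-$*$ closure could a priori be attained on a face, so you must rule out non-indicator limits before invoking uniqueness of the propeller. Second, the uniqueness you need is uniqueness \emph{among equal-Gaussian-measure partitions of $\R^{2}$ up to rotation}, together with a modulus of continuity (stability), not just the identification of the maximum value $P^{*}$; the ``elementary one-variable optimization over central angles'' only handles conical competitors, whereas the reduction to cones itself requires a separate rearrangement argument. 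If you replace your planar endgame by a citation to \cite{heilman12}, the proposal coincides with the paper's proof.
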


Theorem \ref{pis3inf} is the first proven case of the Plurality is Stablest Conjecture.\ref{pisinf}.

\subsection{More Formal Introduction}

Using a generalization of the Central Limit Theorem known as the invariance principle \cite{mossel10,isaksson11}, there is an equivalence between the discrete problem of Conjecture \ref{pisinf} and a continuous problem which is known as the Standard Simplex Conjecture \cite{isaksson11}.  For more details on this equivalence, see Section 7 of \cite{isaksson11}We begin by providing some background for the latter conjecture, stated in Conjecture \ref{conj2} below.

For any $k\geq1$, we define the Gaussian density as
\begin{equation}\label{zero0.0}
\begin{aligned}
\gamma_{k}(x)&\colonequals (2\pi)^{-k/2}e^{-\vnormt{x}^{2}/2},\qquad
\langle x,y\rangle\colonequals\sum_{i=1}^{\adimn}x_{i}y_{i},\qquad
\vnormt{x}^{2}\colonequals\langle x,x\rangle,\\
&\qquad\forall\,x=(x_{1},\ldots,x_{\adimn}),y=(y_{1},\ldots,y_{\adimn})\in\R^{\adimn}.
\end{aligned}
\end{equation}

Let $z_{1},\ldots,z_{m}\in\R^{\adimn}$ be the vertices of a regular simplex in $\R^{\adimn}$ centered at the origin.  For any $1\leq i\leq m$, define
\begin{equation}\label{wdef}
\Omega_{i}\colonequals\{x\in\R^{\adimn}\colon\langle x,z_{i}\rangle=\max_{1\leq j\leq m}\langle x,z_{j}\rangle\}.
\end{equation}
We refer to any sets satisfying \eqref{wdef} as \textbf{cones over a regular simplex}.

Let $f\colon\R^{\adimn}\to[0,1]$ be measurable and let $\rho\in(-1,1)$.  Define the \textbf{Ornstein-Uhlenbeck operator with correlation $\rho$} applied to $f$ by
\begin{equation}\label{oudef}
\begin{aligned}
T_{\rho}f(x)
&\colonequals\int_{\R^{\adimn}}f(x\rho+y\sqrt{1-\rho^{2}})\gamma_{\adimn}(y)\,\d y\\
&=(1-\rho^{2})^{-(\adimn)/2}(2\pi)^{-(\adimn)/2}\int_{\R^{\adimn}}f(y)e^{-\frac{\vnorm{y-\rho x}^{2}}{2(1-\rho^{2})}}\,\d y,
\qquad\forall x\in\R^{\adimn}.
\end{aligned}
\end{equation}
$T_{\rho}$ is a parametrization of the Ornstein-Uhlenbeck operator, which gives a fundamental solution of the (Gaussian) heat equation
\begin{equation}\label{oup}
\frac{d}{d\rho}T_{\rho}f(x)=\frac{1}{\rho}\Big(-\overline{\Delta} T_{\rho}f(x)+\langle x,\overline{\nabla}T_{\rho}f(x)\rangle\Big),\qquad\forall\,x\in\R^{\adimn}.
\end{equation}
Here $\overline{\Delta}\colonequals\sum_{i=1}^{\adimn}\partial^{2}/\partial x_{i}^{2}$ and $\overline{\nabla}$ is the usual gradient on $\R^{\adimn}$.  $T_{\rho}$ is not a semigroup, but it satisfies $T_{\rho_{1}}T_{\rho_{2}}=T_{\rho_{1}\rho_{2}}$ for all $\rho_{1},\rho_{2}\in(0,1)$.  We have chosen this definition since the usual Ornstein-Uhlenbeck operator is only defined for $\rho\in[0,1]$.

\begin{definition}[\embolden{Noise Stability}]\label{noisedef}
Let $\Omega\subset\R^{\adimn}$ be measurable.  Let $\rho\in(-1,1)$.  We define the \textit{noise stability} of the set $\Omega$ with correlation $\rho$ to be
$$\int_{\R^{\adimn}}1_{\Omega}(x)T_{\rho}1_{\Omega}(x)\gamma_{\adimn}(x)\,\d x
\stackrel{\eqref{oudef}}{=}(2\pi)^{-(\adimn)}(1-\rho^{2})^{-(\adimn)/2}\int_{\Omega}\int_{\Omega}e^{\frac{-\|x\|^{2}-\|y\|^{2}+2\rho\langle x,y\rangle}{2(1-\rho^{2})}}\,\d x\d y.$$
Equivalently, if $X=(X_{1},\ldots,X_{\adimn}),Y=(Y_{1},\ldots,Y_{\adimn})\in\R^{\adimn}$ are $(\adimn)$-dimensional jointly Gaussian distributed random vectors with $\E X_{i}Y_{j}=\rho\cdot1_{(i=j)}$ for all $i,j\in\{1,\ldots,\adimn\}$, then
$$\int_{\R^{\adimn}}1_{\Omega}(x)T_{\rho}1_{\Omega}(x)\gamma_{\adimn}(x)\,\d x=\mathbb{P}((X,Y)\in \Omega\times \Omega).$$
\end{definition}

Maximizing the noise stability of a Euclidean partition is the continuous analogue of finding a voting method that is most stable to random corruption of votes, among voting methods where each voter has a small influence on the election's outcome.

\begin{prob}[\embolden{Standard Simplex Problem}, {\cite{isaksson11}}]\label{prob2}
Let $m\geq3$.  Fix $a_{1},\ldots,a_{m}>0$ such that $\sum_{i=1}^{m}a_{i}=1$.  Fix $\rho\in(0,1)$.  Find measurable sets $\Omega_{1},\ldots\Omega_{m}\subset\R^{\adimn}$ with $\cup_{i=1}^{m}\Omega_{i}=\R^{\adimn}$ and $\gamma_{\adimn}(\Omega_{i})=a_{i}$ for all $1\leq i\leq m$ that maximize
$$\sum_{i=1}^{m}\int_{\R^{\adimn}}1_{\Omega_{i}}(x)T_{\rho}1_{\Omega_{i}}(x)\gamma_{\adimn}(x)\,\d x,$$
subject to the above constraints.  (Here $\gamma_{\adimn}(\Omega_{i})\colonequals\int_{\Omega_{i}}\gamma_{\adimn}(x)\,\d x$ $\forall$ $1\leq i\leq m$.)
\end{prob}

We can now state the continuous version of Conjecture \ref{pisinf}.

\begin{conj}[\embolden{Standard Simplex Conjecture} {\cite{isaksson11}}]\label{conj2}
Let $\Omega_{1},\ldots\Omega_{m}\subset\R^{\adimn}$ maximize Problem \ref{prob2}.  Assume that $m-1\leq\adimn$.  Fix $\rho\in(0,1)$.  Let $z_{1},\ldots,z_{m}\in\R^{\adimn}$ be the vertices of a regular simplex in $\R^{\adimn}$ centered at the origin.  Then $\exists$ $w\in\R^{\adimn}$ such that, for all $1\leq i\leq m$,
$$\Omega_{i}=w+\{x\in\R^{\adimn}\colon\langle x,z_{i}\rangle=\max_{1\leq j\leq m}\langle x,z_{j}\rangle\}.$$
\end{conj}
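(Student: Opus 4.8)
The plan is to attack Conjecture \ref{conj2} by the calculus of variations, in four stages: (1) existence and boundary regularity of maximizers of Problem \ref{prob2}; (2) the first--variation (Euler--Lagrange) conditions; (3) a second--variation argument forcing an optimal partition to be a cone of dimension at most $m-1$; and (4) classification of the stable cones, showing the regular--simplex cone is the unique maximizer.

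\textbf{Step 1: existence and regularity.} For $\rho\in(0,1)$ the operator $T_{\rho}$ in \eqref{oudef} is Hilbert--Schmidt on $L^{2}(\R^{\adimn},\gamma_{\adimn})$, so the functional $(\Omega_{1},\dots,\Omega_{m})\mapsto\sum_{i=1}^{m}\int_{\R^{\adimn}}1_{\Omega_{i}}T_{\rho}1_{\Omega_{i}}\,\gamma_{\adimn}(x)\,\d x$ is continuous on the set of partitions with $\gamma_{\adimn}(\Omega_{i})=a_{i}$, which is compact in the $L^{2}(\gamma_{\adimn})$ topology; hence a maximizer exists. I would then invoke the regularity theory for this constrained problem (in the spirit of the Gaussian--isoperimetry works cited above, with the perimeter replaced by the smooth kernel of $T_{\rho}$) to conclude that each interface $\Sigma_{ij}\colonequals\partial\Omega_{i}\cap\partial\Omega_{j}$ is a real--analytic hypersurface outside a relatively closed singular set of Hausdorff dimension at most $\sdimn-1$, and that at the lower--dimensional junctions the interfaces meet at equal angles.

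\textbf{Step 2: first variation.} Set $u_{i}\colonequals T_{\rho}1_{\Omega_{i}}$. Perturbing $\Omega_{i}$ along a vector field with normal speed $V$ on $\Sigma_{ij}$ and using self--adjointness of $T_{\rho}$, the first derivative of the functional equals $2\int_{\Sigma_{ij}}V(u_{i}-u_{j})\,\d\gamma$; stationarity under every volume--preserving perturbation then produces Lagrange multipliers $\lambda_{1},\dots,\lambda_{m}\in\R$ with $u_{i}(x)-\lambda_{i}=u_{j}(x)-\lambda_{j}$ for all $x\in\Sigma_{ij}$ and all $i\neq j$. Equivalently, up to null sets $\Omega_{i}=\{x\in\R^{\adimn}\colon u_{i}(x)-\lambda_{i}=\max_{1\le j\le m}(u_{j}(x)-\lambda_{j})\}$, and symmetry suggests $\lambda_{1}=\cdots=\lambda_{m}$ when $a_{1}=\cdots=a_{m}=1/m$.

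\textbf{Step 3: dimension reduction.} This is exactly the structure theorem announced in the abstract, and I would prove it by testing the second--variation inequality with deformation fields generated by the constant coordinate vector fields on $\R^{\adimn}$ (infinitesimal translations), combined with the heat identity \eqref{oup}: if the interfaces bent in directions spanning more than $m-1$ dimensions, some admissible deformation would strictly increase the functional, contradicting maximality. Hence each $\Sigma_{ij}$ is a piece of a hyperplane, all these hyperplanes pass through a common point $w$, the maximizer is a cone over a partition of an $(m-1)$--dimensional subspace, and the maximum value in Problem \ref{prob2} is independent of $\adimn$ once $\adimn\ge m-1$.

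\textbf{Step 4 and the main obstacle.} It remains to show that among conical partitions of $\R^{m-1}$ with the prescribed $\gamma$--volumes the noise stability is maximized only by cones over a regular simplex (and that for $a_{i}\equiv 1/m$ these coincide, up to a translation $w$, with the sets in \eqref{wdef}; note that for genuinely unequal $a_{i}$ the statement is only plausible in a suitably reinterpreted form, since the regular--simplex cone has equal volumes and the unequal case is otherwise known to fail). Parametrizing such cones by their $(m-1)$--dimensional cross--sections reduces everything to a finite--dimensional optimization; for $m=3$ this is optimization over the three sector angles of a partition of $\R^{2}$, and one must prove the equal--angle critical point is the global maximum for every $\rho\in(0,1)$. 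This classification is the genuine difficulty: for small $\rho$ the Hermite expansion $\sum_{i}\sum_{k\ge 0}\rho^{k}\|P_{k}1_{\Omega_{i}}\|_{2}^{2}$ is dominated by its $k=1$ term, whose maximization already forces the simplex--cone structure (this is the regime handled here for $m=3$), while as $\rho\to 1^{-}$ the problem degenerates to a Gaussian--perimeter problem treatable by geometric methods; controlling the intermediate range of $\rho$, and treating $m\ge 4$ where even the list of stable cones is not known, is where a new idea would be required — either a global symmetrization principle for conical partitions under $T_{\rho}$, or a complete sign analysis of the second variation restricted to the space of cones — and is why Conjecture \ref{conj2} is open in general.
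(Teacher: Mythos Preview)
The statement is Conjecture \ref{conj2}, which the paper does \emph{not} prove in full; it is stated as an open conjecture. What the paper actually proves is the dimension-reduction structure theorem (Theorem \ref{mainthm1}), and then the case $m=3$, $0<\rho<\rho_{0}$ by combining that theorem with an earlier two-dimensional result from \cite{heilman12}. Your outline correctly separates the problem into existence/regularity, first variation, a second-variation dimension reduction, and a remaining classification step, and you correctly flag that the full conjecture remains open. Steps 1 and 2 are in line with the paper (Lemmas \ref{existlem}, \ref{reglem}, \ref{firstvarmaxns}).

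The genuine gap is in your Step 3. The translation second-variation argument, as carried out in the paper (Lemmas \ref{treig2} and \ref{keylem}), shows only that for every $v$ in a subspace $V\subset\R^{\adimn}$ of dimension at least $\sdimn+2-m$ one has $\langle v,N_{ij}(x)\rangle=0$ on each $\Sigma_{ij}$. This forces all the unit normals to lie in an $(m-1)$-dimensional subspace, so the maximizer is a \emph{cylinder} $\Omega_{i}=\Omega_{i}'\times\R^{\sdimn-m+2}$ over some partition $\Omega_{1}',\dots,\Omega_{m}'$ of $\R^{m-1}$. It does \emph{not} show that the interfaces $\Sigma_{ij}$ are pieces of hyperplanes, nor that the partition is a cone through a common point $w$. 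Within $\R^{m-1}$ the cross-sections $\Omega_{i}'$ can a priori have curved boundaries; nothing in the translation argument rules this out. (The heat identity \eqref{oup} is not the mechanism here; the key is the divergence-theorem identity of Lemma \ref{treig2}, which yields $S_{ij}(\langle v,N\rangle)=\frac{1}{\rho}\langle v,N_{ij}\rangle\,\vnormf{\overline{\nabla}T_{\rho}(1_{\Omega_{i}}-1_{\Omega_{j}})}$ and hence a strictly positive second variation whenever $\langle v,N_{ij}\rangle\not\equiv 0$.)

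Consequently your Step 4 understates the remaining difficulty. You are not reduced to a finite-dimensional optimization over conical partitions of $\R^{m-1}$, but to the full Problem \ref{prob2} in dimension $m-1$, which is still infinite-dimensional. For $m=3$ this is the planar problem, and the paper does not solve it directly: it quotes \cite{heilman12}, which handles only small $\rho$. Showing that the optimal $\Omega_{i}'\subset\R^{m-1}$ are themselves polyhedral cones --- equivalently, that the interfaces in $\R^{m-1}$ are flat --- is precisely the missing piece, and neither the paper's argument nor your outline supplies it.
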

It is known that Conjecture \ref{conj2} is false when $(a_{1},\ldots,a_{m})\neq(1/m,\ldots,1/m)$ \cite{heilman14}.  In the remaining case that $a_{i}=1/m$ for all $1\leq i\leq m$, it is assumed that $w=0$ in Conjecture \ref{conj2}.

For expositional simplicity, we separately address the case $\rho<0$ of Conjecture \ref{conj2} in Section \ref{negsec} below.

\subsection{Plurality is Stablest Conjecture}

As previously mentioned, the Standard Simplex Conjecture \cite{isaksson11} stated in Conjecture \ref{conj2} is essentially equivalent to the Plurality is Stablest Conjecture from Conjecture \ref{pisinf}.   After making several definitions, we state a formal version of Conjecture \ref{pisinf} as Conjecture \ref{prob4} below.

If $g\colon\{1,\ldots,m\}^{\sdimn}\to\R$ and $1\leq i\leq\sdimn$, we denote
$$ \E(g)\colonequals m^{-\sdimn}\sum_{\omega\in\{1,\ldots,m\}^{\sdimn}} g(\omega)$$
$$\E_{i}(g)(\omega_{1},\ldots,\omega_{i-1},\omega_{i+1},\ldots,\omega_{\sdimn})\colonequals m^{-1}\sum_{\omega_{i}\in\{1,\ldots,m\}} g(\omega_{1},\ldots,\omega_{n})$$
$$\qquad\qquad\qquad\qquad\qquad\qquad\qquad\qquad\qquad\forall\,(\omega_{1},\ldots,\omega_{i-1},\omega_{i+1},\ldots,\omega_{\sdimn})\in\{1,\ldots,m\}^{\sdimn}.$$
Define also the $i^{th}$ \textbf{influence} of $g$, i.e. the influence of the $i^{th}$ voter of $g$, as
\begin{equation}\label{infdef}
\mathrm{Inf}_{i}(g)\colonequals \E [(g-\E_{i}g)^{2}].
\end{equation}
Let
\begin{equation}\label{deltadef}
\Delta_{m}\colonequals\{(y_{1},\ldots,y_{m})\in\R^{m}\colon y_{1}+\cdots+y_{m}=1,\,\forall\,1\leq i\leq m,\,y_{i}\geq0\}.
\end{equation}
If $f\colon\{1,\ldots,m\}^{\sdimn}\to\Delta_{m}$, we denote the coordinates of $f$ as $f=(f_{1},\ldots,f_{m})$.  For any $\omega\in\Z^{\sdimn}$, we denote $\vnormt{\omega}_{0}$ as the number of nonzero coordinates of $\omega$.  The \textbf{noise stability} of $g\colon\{1,\ldots,m\}^{\sdimn}\to\R$ with parameter $\rho\in(-1,1)$ is
\begin{flalign*}
S_{\rho} g
&\colonequals m^{-\sdimn}\sum_{\omega\in\{1,\ldots,m\}^{\sdimn}} g(\omega)\E_{\rho} g(\delta)\\
&=m^{-\sdimn}\sum_{\omega\in\{1,\ldots,m\}^{\sdimn}} g(\omega)\sum_{\sigma\in\{1,\ldots,m\}^{\sdimn}}\left(\frac{1-(m-1)\rho}{m}\right)^{\sdimn-\vnormt{\sigma-\omega}_{0}}
\left(\frac{1-\rho}{m}\right)^{\vnormt{\sigma-\omega}_{0}} g(\sigma).
\end{flalign*}
Equivalently, conditional on $\omega$, $\E_{\rho}g(\delta)$ is defined so that for all $1\leq i\leq\sdimn$, $\delta_{i}=\omega_{i}$ with probability $\frac{1-(m-1)\rho}{m}$, and $\delta_{i}$ is equal to any of the other $(m-1)$ elements of $\{1,\ldots,m\}$ each with probability $\frac{1-\rho}{m}$, and so that $\delta_{1},\ldots,\delta_{\sdimn}$ are independent.

The \textbf{noise stability} of $f\colon\{1,\ldots,m\}^{\sdimn}\to\Delta_{m}$ with parameter $\rho\in(-1,1)$ is
$$S_{\rho}f\colonequals\sum_{i=1}^{m}S_{\rho}f_{i}.$$

Let $m\geq2$, $k\geq3$.  For each $j\in\{1,\ldots,m\}$, let $e_{j}=(0,\ldots,0,1,0,\ldots,0)\in\R^{m}$ be the $j^{th}$ unit coordinate vector.  Define the \textbf{plurality} function $\mathrm{PLUR}_{m,\sdimn}\colon\{1,\ldots,m\}^{\sdimn}\to\Delta_{m}$ for $m$ candidates and $\sdimn$ voters such that for all $\omega\in\{1,\ldots,m\}^{\sdimn}$.
$$\mathrm{PLUR}_{m,\sdimn}(\omega)
\colonequals\begin{cases}
e_{j}&,\mbox{if }\abs{\{i\in\{1,\ldots,m\}\colon\omega_{i}=j\}}>\abs{\{i\in\{1,\ldots,m\}\colon\omega_{i}=r\}},\\
&\qquad\qquad\qquad\qquad\forall\,r\in\{1,\ldots,m\}\setminus\{j\}\\
\frac{1}{m}\sum_{i=1}^{m}e_{i}&,\mbox{otherwise}.
\end{cases}
$$

We can now state the more formal version of Conjecture \ref{pisinf}.

\begin{conj}[\embolden{Plurality is Stablest, Discrete Version}]\label{prob4}
For any $m\geq2$, $\rho\in[0,1]$, $\epsilon>0$, there exists $\tau>0$ such that if $f\colon\{1,\ldots,m\}^{\sdimn}\to\Delta_{m}$ satisfies $\mathrm{Inf}_{i}(f_{j})\leq\tau$ for all $1\leq i\leq\sdimn$ and for all $1\leq j\leq m$, and if $\E f=\frac{1}{m}\sum_{i=1}^{m}e_{i}$, then
$$
S_{\rho}f\leq \lim_{\sdimn\to\infty}S_{\rho}\mathrm{PLUR}_{m,\sdimn}+\epsilon.
$$
\end{conj}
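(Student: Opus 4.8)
The plan has three stages: reduce the discrete Conjecture \ref{prob4} to its Gaussian analogue, the Standard Simplex Conjecture \ref{conj2}; reduce the Gaussian problem from an arbitrary dimension $N$ down to the critical dimension $m-1$ using the structure theorem proved in this paper; and then settle the $(m-1)$-dimensional problem by the calculus of variations. For the first stage I would invoke the invariance principle of \cite{mossel10} (see also \cite{isaksson11}): if $f\colon\{1,\ldots,m\}^{\sdimn}\to\Delta_{m}$ satisfies $\mathrm{Inf}_{i}(f_{j})\leq\tau$ for all $i,j$ and $\E f=\tfrac1m\sum_{i=1}^{m}e_{i}$, then for $\tau$ small (depending on $\rho,\epsilon,m$) there is a measurable partition $\Omega_{1},\ldots,\Omega_{m}$ of some $\R^{N}$ with $\gamma_{N}(\Omega_{i})=1/m$ and $S_{\rho}f\leq\sum_{i=1}^{m}\int_{\R^{N}}1_{\Omega_{i}}T_{\rho}1_{\Omega_{i}}\gamma_{N}+\epsilon/2$, while a direct central-limit computation identifies $\lim_{\sdimn\to\infty}S_{\rho}\mathrm{PLUR}_{m,\sdimn}$ with the Gaussian noise stability of the cones over a regular simplex \eqref{wdef}. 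So Conjecture \ref{prob4} follows once we know that cones over a regular simplex maximize $\sum_{i}\int 1_{\Omega_{i}}T_{\rho}1_{\Omega_{i}}\gamma_{N}$ over all $m$-partitions of $\R^{N}$ with $\gamma_{N}(\Omega_{i})=1/m$ and $N\geq m-1$, which is exactly Conjecture \ref{conj2} with $a_{i}=1/m$.

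For the second stage, a maximizer of Problem \ref{prob2} exists by weak compactness of bounded functions, and the structure theorem of this paper says any such maximizer is $(m-1)$-dimensional when $m-1\leq N$: after an orthogonal change of coordinates each $\Omega_{i}=\Omega_{i}'\times\R^{N-(m-1)}$ for some $\Omega_{i}'\subset\R^{m-1}$, and the partition has the same noise stability as $\{\Omega_{i}'\}$ in $\R^{m-1}$. This reduces Conjecture \ref{conj2} to the single dimension $N=m-1$, where the simplicial cones genuinely fill the space. In that dimension, take a maximizer $\{\Omega_{i}\}$. By Gaussian regularity theory its reduced boundary decomposes into smooth hypersurface pieces $\Sigma_{ij}=\partial^{*}\Omega_{i}\cap\partial^{*}\Omega_{j}$; the first variation along volume-preserving normal vector fields yields the Euler--Lagrange system $T_{\rho}1_{\Omega_{i}}-T_{\rho}1_{\Omega_{j}}=\lambda_{i}-\lambda_{j}$ on $\Sigma_{ij}$ for constants $\lambda_{1},\ldots,\lambda_{m}$, and differentiating tangentially relates the Gaussian-weighted mean curvature of $\Sigma_{ij}$ to the normal derivative of $T_{\rho}(1_{\Omega_{i}}-1_{\Omega_{j}})$; the second variation must be $\leq 0$ for every volume-preserving normal perturbation.

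For the third stage the route would be: (a) use the Euler--Lagrange system together with symmetrization arguments to force each interface $\Sigma_{ij}$ to be flat, and to force all the interfaces to pass through a single common point, which after translating we take to be the origin; (b) reduce to a partition of the sphere $S^{m-2}$ and turn the nonpositivity of the second variation into a spectral inequality for the spherical Laplacian — for $\rho$ near $1$ this is essentially the analysis behind \cite{heilman18b}, and for $\rho$ near $0$ it reduces, via the Hermite expansion $S_{\rho}=\sum_{k\geq0}\rho^{k}(\cdots)$, to maximizing the degree-one term (the Gaussian barycenters of the $\Omega_{i}$), a lower-order problem one expects to be tractable; (c) conclude that the only admissible spherical partition is the one cut out by the bisecting hyperplanes of a regular simplex, namely \eqref{wdef}.

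The main obstacle is stage (iii) for \emph{intermediate} $\rho$ and \emph{general} $m\geq3$: classifying the conical critical points of Gaussian noise stability in $\R^{m-1}$ and eliminating all of them except the simplicial cone. For $m=2$ the single interface is a hyperplane and the second-variation condition reduces to a one-line eigenvalue estimate (this recovers Borell's inequality). For $m\geq3$, however, the pieces $\Sigma_{ij}$ meet along lower-dimensional singular strata that the Euler--Lagrange system constrains only weakly, the second-variation quadratic form does not obviously split across the pieces, and there is no evident monotonicity in $\rho$ linking the $\rho\to0^{+}$ and $\rho\to1^{-}$ regimes — so ruling out curved interfaces on $S^{m-2}$ uniformly for $\rho\in(0,1)$ is precisely what is missing. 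It is exactly the content this paper establishes only for $m=3$ and $\rho<\rho_{0}$; I expect extending it to all $m$ and all $\rho$ to require genuinely new ideas rather than a short continuation of the present argument.
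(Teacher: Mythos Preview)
The statement you are asked to prove is Conjecture~\ref{prob4}, and the paper does \emph{not} prove it: it is stated as an open conjecture, and the paper establishes only the special case $m=3$, $0<\rho<\rho_{0}$ (Theorem~\ref{main4}). So there is no ``paper's own proof'' to compare your proposal against, and your proposal cannot be a complete proof either --- a point you yourself acknowledge in the final paragraph.

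That said, your stages (i) and (ii) correctly describe the architecture the paper uses for the case it \emph{does} settle. Stage (i) --- passing from the discrete problem to Conjecture~\ref{conj2} via the invariance principle of \cite{mossel10,isaksson11} --- is exactly the reduction the paper invokes (see the discussion preceding Conjecture~\ref{prob4}). Stage (ii) --- existence of a maximizer and dimension reduction to $\R^{m-1}$ --- is exactly Lemma~\ref{existlem} together with the main structure theorem, Theorem~\ref{mainthm1}.

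Where your proposal diverges from the paper is stage (iii). The paper does not attempt any of the variational/spherical classification you sketch (flatness of interfaces, spectral inequalities on $S^{m-2}$, Hermite expansion). Instead, for $m=3$ the structure theorem reduces Problem~\ref{prob2} to $\R^{2}$, and the paper simply quotes the earlier result \cite{heilman12}, which settled the two-dimensional case for small $\rho$ (with the explicit bound $\rho_{0}=e^{-20\cdot 3^{10^{14}}}$). Your sketch of stage (iii) is a reasonable heuristic roadmap, but none of it is carried out here, and several of the steps you list (e.g., ``forcing each interface $\Sigma_{ij}$ to be flat'' by symmetrization, or a uniform-in-$\rho$ spectral argument) are not known and would themselves be substantial results. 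Your concluding assessment --- that closing stage (iii) for general $m$ and all $\rho\in(0,1)$ requires genuinely new ideas --- matches the paper's own position; indeed, even the case $m=3$ for $\rho\geq\rho_{0}$ is left open.
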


The main result of the present paper (stated in Theorem \ref{main4} below) is: $\exists$ $\rho_{0}>0$ such that Conjecture \ref{prob4} is true for $m=3$ for all $0<\rho<\rho_{0}$, for all $n\geq1$.  The only previously known case of Conjecture \ref{prob4} was the following.

\begin{theorem}[\embolden{Majority is Stablest, Formal, Biased Case}, {\cite[Theorem 4.4]{mossel10}}]\label{misg}
Conjecture \ref{prob4} is true when $m=2$.
\end{theorem}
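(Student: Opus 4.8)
The plan is to derive Theorem \ref{misg} from the continuous structure theorem announced in the abstract, combined with the standard invariance‑principle reduction of \cite{mossel10,isaksson11}. First I would invoke the invariance principle (the Majority‑is‑Stablest machinery of \cite{mossel10}, and Section 7 of \cite{isaksson11} for the passage between discrete and continuous): given $\epsilon>0$ and $\rho\in[0,1]$, there is $\tau>0$ so that any $f=(f_{1},f_{2})\colon\{1,\ldots,m\}^{\sdimn}\to\Delta_{2}$ with all $\mathrm{Inf}_{i}(f_{j})\leq\tau$ and $\E f=\tfrac12(e_{1}+e_{2})$ has noise stability $S_{\rho}f$ within $\epsilon$ of $\int 1_{\Omega}T_{\rho}1_{\Omega}\,\d\gamma_{\adimn}+\int 1_{\Omega^{c}}T_{\rho}1_{\Omega^{c}}\,\d\gamma_{\adimn}$ for some measurable $\Omega\subset\R^{\adimn}$ with $\gamma_{\adimn}(\Omega)=\tfrac12$, while $\lim_{\sdimn\to\infty}S_{\rho}\mathrm{PLUR}_{2,\sdimn}$ equals the same expression evaluated on a half‑space of Gaussian measure $\tfrac12$. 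Hence it suffices to prove the $m=2$ case of Conjecture \ref{conj2}, i.e. \emph{Borell's inequality}: for every $\rho\in(0,1)$ and $a\in(0,1)$, among measurable $\Omega\subset\R^{\adimn}$ with $\gamma_{\adimn}(\Omega)=a$, the quantity $\int 1_{\Omega}T_{\rho}1_{\Omega}\,\d\gamma_{\adimn}$ is maximized by a half‑space $\{x\colon\langle x,e\rangle\leq t\}$ of measure $a$ (monotonicity of noise stability in $\rho$ handles $\rho=0,1$).

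To prove this continuous statement I would argue variationally. Fix $\rho\in(0,1)$ and volumes $(a,1-a)$, and first establish that Problem \ref{prob2} with $m=2$ admits a maximizing partition $(\Omega_{1},\Omega_{2})$ (compactness of the class of such partitions in an appropriate topology together with continuity/semicontinuity of $\Omega\mapsto\int 1_{\Omega}T_{\rho}1_{\Omega}\,\d\gamma_{\adimn}$). By the structure theorem, any maximizer is $(m-1)=1$‑dimensional: after a rotation, $\Omega_{1}=A\times\R^{\adimn-1}$ for some measurable $A\subset\R$ with $\gamma_{1}(A)=a$. Since the Gaussian density factorizes and $T_{\rho}$ acts coordinatewise (see \eqref{oudef}), the noise stability of $A\times\R^{\adimn-1}$ equals the one‑dimensional noise stability $\int_{\R}1_{A}T_{\rho}1_{A}\,\d\gamma_{1}$, so the problem reduces to maximizing this quantity over $A\subset\R$ of fixed Gaussian measure. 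Here I would conclude with a one‑dimensional argument: either cite the classical rearrangement inequality for the (totally positive / Pólya‑frequency) Ornstein–Uhlenbeck kernel, which shows the half‑line $\{x\leq\Phi^{-1}(a)\}$ is optimal, or redo it by hand — a maximizer in $\R$ has finite boundary $\{p_{1}<\cdots<p_{k}\}$, the first‑variation (Lagrange multiplier) condition with the volume constraint forces $T_{\rho}1_{A}$ to take a common value at every $p_{j}$, and a second‑variation computation rules out $k\geq2$, leaving $k\leq1$, i.e. $A$ is a half‑line. Tracing back through the invariance principle then yields $S_{\rho}f\leq\lim_{\sdimn\to\infty}S_{\rho}\mathrm{PLUR}_{2,\sdimn}+\epsilon$, which is Conjecture \ref{prob4} for $m=2$.

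The main obstacle is the one‑dimensional endgame, because the structure theorem lowers the ambient dimension only to $m-1=1$ and is vacuous on $\R^{1}$; identifying the half‑line among one‑dimensional competitors therefore needs genuinely separate input — either the rearrangement inequality for the Ornstein–Uhlenbeck kernel or a self‑contained second‑variation argument showing the optimal one‑dimensional boundary is a single point (and that a half‑line beats a union of intervals and bounded intervals). A secondary technical burden is making the reductions rigorous: proving existence and sufficient regularity of a maximizer of Problem \ref{prob2} for $m=2$ so that the structure theorem and the first/second variation apply; checking the boundary case $m-1=\adimn$ causes no circularity; and verifying on the discrete side that the invariance‑principle error terms are uniform in $\sdimn$ so the additive $\epsilon$ in Conjecture \ref{prob4} is genuinely attainable. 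None of this is expected to be deep once the structure theorem is available, but the optimality of the half‑line in $\R^{1}$ is the step carrying content beyond it.
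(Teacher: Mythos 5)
The paper does not prove Theorem \ref{misg} at all: it is quoted as a known result, namely \cite[Theorem 4.4]{mossel10}, and is cited only as the single previously known case of Conjecture \ref{prob4}. So your proposal is not reconstructing the paper's argument but proposing an alternative proof of the Mossel--O'Donnell--Oleszkiewicz theorem. Your route (invariance principle, then the structure theorem to reduce to dimension $m-1=1$, then a one-dimensional argument) is genuinely different from theirs, which is the invariance principle plus Borell's inequality in $\R^{\adimn}$, the latter proved by symmetrization or semigroup methods. It is exactly the route the present paper describes as only ``(almost) a variational proof of Borell's inequality.''

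The gap is the one you yourself name: the one-dimensional endgame. Two points make it more than a formality. First, if you resolve it by citing the rearrangement inequality for the Ornstein--Uhlenbeck kernel, that is one-dimensional Borell; but Borell's inequality is known in every dimension, so the entire detour through existence, regularity, and the structure theorem becomes logically redundant --- you could cite Borell in $\R^{\adimn}$ directly and you are back to the proof already in \cite{mossel10}. The proposal only has independent content if the one-dimensional step is done from scratch, and that is precisely where the structure theorem gives you nothing: its mechanism (translations as almost eigenfunctions of $S$, forced to vanish in all but $m-1$ directions) is vacuous in the final dimension. This is why the analogous $m=3$ endgame in $\R^{2}$ required the separate and lengthy argument of \cite{heilman12}, and even then only for $\rho$ below an extremely small threshold. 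Your sketched argument in $\R^{1}$ (a maximizer has finite boundary $\{p_{1}<\cdots<p_{k}\}$, the first variation forces $T_{\rho}1_{A}$ to take a common value at each $p_{j}$, and ``a second-variation computation rules out $k\geq2$'') is plausible but is an assertion rather than a proof: you have not shown that a one-dimensional maximizer has finitely many boundary points, nor exhibited an admissible volume-preserving perturbation whose second variation is strictly positive whenever $k\geq2$. Until that step is carried out, the proposal collapses to ``invariance principle plus Borell,'' i.e.\ the known proof rather than a new one.
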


For an even more general version of Theorem \ref{misg}, see \cite[Theorem 4.4]{mossel10}.  In particular, the assumption on $\E f$ can be removed, though we know this cannot be done for $m\geq3$ \cite{heilman14}.

\subsection{Our Contribution}

The main structure Theorem below implies that sets optimizing noise stability in Problem \ref{prob2} are inherently low-dimensional.  Though this statement might seem intuitively true, since many inequalities involving the Gaussian measure have low-dimensional optimizers, this statement has not been proven before.  For example, Theorem \ref{mainthm1} was listed as an open question in \cite{de17,de18} and \cite{ghazi18}.  Indeed, the lack of Theorem \ref{mainthm1} has been one main obstruction to a solution of Conjectures \ref{prob2} and \ref{prob4}.

\begin{theorem}[\embolden{Main Structure Theorem/ Dimension Reduction}]\label{mainthm1}
Fix $\rho\in(0,1)$.  Let $m\geq2$ with $m\leq\sdimn+2$.  Let $\Omega_{1},\ldots\Omega_{m}\subset\R^{\adimn}$ maximize Problem \ref{prob2}.  Then, after rotating the sets $\Omega_{1},\ldots\Omega_{m}$ and applying Lebesgue measure zero changes to these sets, there exist measurable sets $\Omega_{1}',\ldots\Omega_{m}'\subset\R^{m-1}$ such that,
$$\Omega_{i}=\Omega_{i}'\times\R^{\sdimn-m+2},\qquad\forall\, 1\leq i\leq m.$$  
\end{theorem}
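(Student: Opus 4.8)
The plan is to use the calculus of variations on the noise-stability functional, exploiting the fact that an optimal partition $\{\Omega_i\}$ must be stationary under volume-preserving perturbations and satisfy a second-order (stability) inequality. The key object is the "first variation": if one perturbs the boundaries $\partial\Omega_i$ by a normal vector field, the change in $\sum_i \int 1_{\Omega_i} T_\rho 1_{\Omega_i}\,d\gamma$ can be written as a boundary integral involving $T_\rho 1_{\Omega_i}$ restricted to the interfaces. Stationarity forces the "Lagrangian" — roughly $T_\rho(1_{\Omega_j}-1_{\Omega_i})$ plus a Lagrange multiplier term — to vanish (or have constant sign) on each interface $\partial\Omega_i\cap\partial\Omega_j$. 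This is the Euler–Lagrange condition that pins down the geometry.

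The dimension-reduction mechanism itself comes from the \emph{translation trick}: the functional $\sum_i\int 1_{\Omega_i}T_\rho 1_{\Omega_i}\,d\gamma$ is \emph{not} translation invariant (the Gaussian weight breaks translations), but its second variation in the direction of an infinitesimal translation $v\in\R^{n+1}$ has a clean closed form. Concretely, plugging the constant-in-space vector field $v$ into the second-variation formula produces a quadratic form $Q(v)$ in $v\in\R^{n+1}$; optimality (the second variation is $\le 0$ along volume-preserving directions, and translations can be corrected to be volume-preserving to first order) forces $Q$ to be negative semidefinite. One then argues that $Q(v)=0$ forces the partition to be a product in the direction $v$ — i.e., invariant under translation by $v$ — and, crucially, that the rank of $Q$ is at most $m-2$. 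Since $Q$ acts on an $(n+1)$-dimensional space, its kernel has dimension at least $(n+1)-(m-2)=n-m+3$, so $\{\Omega_i\}$ is a product $\Omega_i'\times\R^{n-m+2}$ after rotating so that the kernel is a coordinate subspace. The bound on the rank of $Q$ should come from the fact that the obstruction to translation-invariance lives in the span of the "interface data," and there are essentially $m-1$ regions hence $m-2$ independent constraints (this is where the hypothesis $m\le n+2$, equivalently $m-1\le n+1$, enters to make the count nontrivial).

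The steps, in order: (1) establish that a maximizer exists and has sufficiently regular boundary — reduced boundary, finite perimeter, enough smoothness to perform variations (citing or adapting the existence/regularity theory used in the $\rho\to1$ case, e.g. \cite{heilman18b} and references therein); (2) derive the first-variation formula and the Euler–Lagrange equation $T_\rho(1_{\Omega_i}-1_{\Omega_j})=\text{const}$ (in an appropriate sense) on each pairwise interface; (3) derive the second-variation formula for general normal vector fields; (4) specialize the second variation to infinitesimal translations, corrected to preserve all $m$ volume constraints, obtaining the quadratic form $Q$ on $\R^{n+1}$; (5) show negativity of $Q$ and bound $\mathrm{rank}(Q)\le m-2$; (6) conclude that the kernel directions are "flat," i.e. the partition splits as a product, and rotate coordinates.

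The \textbf{main obstacle} I expect is step (5): controlling the rank of the translation quadratic form $Q$. Getting a clean boundary-integral expression for $Q(v)$ is already delicate because $T_\rho$ is nonlocal — one cannot localize near a single interface — so $Q(v)$ will be a sum of double integrals over pairs of interfaces weighted by the Ornstein–Uhlenbeck kernel, and extracting its signature requires understanding how the Euler–Lagrange equation from step (2) constrains these interface integrals against each other. The rank bound presumably follows by writing $Q(v)$ as an inner product of $v$ with an $\R^{n+1}$-valued function supported on the interfaces, and showing that function takes values in an $(m-2)$-dimensional subspace because of the interface constraints; making that argument rigorous — in particular handling the nonlocality and the lower-regularity of the reduced boundary, and verifying the translation perturbations can genuinely be made volume-preserving to second order with a negligible error — is the technical heart of the proof.
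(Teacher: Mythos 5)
Your skeleton---existence and regularity, first variation giving $T_{\rho}(1_{\Omega_{i}}-1_{\Omega_{j}})=c_{ij}$ on each interface, second variation, specialization to translations, dimension count, rotation---matches the paper, but your step (5) contains the real gap, and the mechanism you propose there is not the one that works. The paper does not bound the rank of the translation quadratic form $Q$; indeed, for the conjectured extremizers (cones over a regular simplex) the rank of $Q$ on $\R^{\adimn}$ is $m-1$, not $m-2$, so the rank bound you hope for is false as stated. The idea you are missing is the ``almost eigenfunction'' identity \eqref{zero5} (Lemmas \ref{treig} and \ref{treig2}): combining the divergence theorem with the Euler--Lagrange condition in the form $\overline{\nabla}T_{\rho}(1_{\Omega_{i}}-1_{\Omega_{j}})=-N_{ij}\vnormf{\overline{\nabla}T_{\rho}(1_{\Omega_{i}}-1_{\Omega_{j}})}$ on $\Sigma_{ij}$, one shows that $f=\langle v,N_{ij}\rangle$ satisfies $S_{ij}(\langle v,N\rangle)=\frac{1}{\rho}f\,\vnormf{\overline{\nabla}T_{\rho}(1_{\Omega_{i}}-1_{\Omega_{j}})}$. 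This collapses the nonlocal double integral in the second variation and yields
$$Q(v)=\Big(\frac{1}{\rho}-1\Big)\sum_{1\leq i<j\leq m}\int_{\Sigma_{ij}}\vnormf{\overline{\nabla}T_{\rho}(1_{\Omega_{i}}-1_{\Omega_{j}})}\langle v,N_{ij}(x)\rangle^{2}\gamma_{\adimn}(x)\,\d x\;\geq\;0,$$
with equality if and only if $\langle v,N_{ij}\rangle=0$ almost everywhere (using that the gradient vanishes only on a set of Hausdorff dimension at most $\sdimn-1$, Lemma \ref{lemma7r}). Maximality forces $Q\leq 0$ on volume-preserving translations, hence $Q=0$ there, hence flatness in those directions. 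Without this identity, $Q(v)=0$ gives you nothing: $Q(v)$ is a priori the difference of a positive-semidefinite double integral and a negative local term, and it could vanish by cancellation without $\langle v,N\rangle$ vanishing.

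Your dimension count is also off in a way that matters. The subspace of admissible translations is the kernel of $M\colon v\mapsto\big(\sum_{j\neq i}\int_{\Sigma_{ij}}\langle v,N_{ij}\rangle\gamma_{\adimn}\,\d x\big)_{1\leq i\leq m}$, whose rank is at most $m-1$ (its components sum to zero), so the kernel has dimension at least $(\sdimn+1)-(m-1)=\sdimn-m+2$, which is exactly the dimension of the flat factor in the conclusion; your count of ``$m-2$ independent constraints'' and kernel dimension $\sdimn-m+3$ would assert a strictly stronger splitting that is false for the simplex cones. Finally, note that negative semidefiniteness of $Q$ on all of $\R^{\adimn}$ is not available (translations outside the kernel of $M$ do not preserve the volume constraints, so the maximality inequality does not apply to them); the argument must be run entirely on the volume-preserving subspace, using the extension lemma (Lemma \ref{lemma27}) to upgrade first-order volume preservation to a genuine volume-preserving variation.
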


In the case $m=2$, Theorem \ref{mainthm1} is (almost) a variational proof of Borell's inequality, since it reduces Problem \ref{prob2} to a one-dimensional problem.

In the case $m=3$, Theorem \ref{mainthm1} says that Conjecture \ref{conj2} for arbitrary $\adimn$ reduces to the case $\adimn=2$, which was solved for small $\rho>0$ in \cite{heilman12}.  That is, Theorem \ref{mainthm1} and the main result of \cite{heilman12} imply:

\begin{theorem}[\embolden{Main; Plurality is Stablest for Three Candidates and Small Correlation}]\label{main4}
There exists $\rho_{0}>0$ such that Conjecture \ref{prob4} is true for $m=3$ and for all $0<\rho<\rho_{0}$.
\end{theorem}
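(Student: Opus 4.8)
The plan is to derive Theorem \ref{main4} as an immediate corollary of Theorem \ref{mainthm1} (the Main Structure Theorem) combined with the known two-dimensional case of the Standard Simplex Conjecture from \cite{heilman12}. First I would note that Theorem \ref{main4} is exactly Conjecture \ref{prob4} restricted to $m=3$ and $0<\rho<\rho_0$, so by the invariance-principle equivalence of \cite{isaksson11} (Section 7 of that paper, as cited in the excerpt) it suffices to prove the continuous statement, Conjecture \ref{conj2}, for $m=3$, all $\adimn\geq2$, and all $\rho\in(0,\rho_0)$. So the real content is: \emph{the maximizers of Problem \ref{prob2} with $m=3$, $a_1=a_2=a_3=1/3$, in $\R^{\adimn}$ for any $\adimn\geq2$, are cones over a regular simplex (translated appropriately, $w=0$ in the symmetric case).}

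Next I would invoke Theorem \ref{mainthm1} with $m=3$: since $3\le \sdimn+2$ holds for every $\sdimn\ge1$ (equivalently $\adimn\ge2$), the theorem says that after a rotation and a measure-zero modification, any maximizing partition $\Omega_1,\Omega_2,\Omega_3\subset\R^{\adimn}$ has the product form $\Omega_i=\Omega_i'\times\R^{\sdimn-1}$ with $\Omega_1',\Omega_2',\Omega_3'\subset\R^{m-1}=\R^{2}$. The point is that the noise stability functional $\sum_i \int 1_{\Omega_i}T_\rho 1_{\Omega_i}\gamma_{\adimn}$ is invariant under rotations of $\R^{\adimn}$, and for a product set $\Omega_i'\times\R^{\sdimn-1}$ it factors as the two-dimensional noise stability of $\Omega_i'$ times the trivial factor $\int_{\R^{\sdimn-1}} T_\rho 1=1$ (since $T_\rho$ fixes constants and $\gamma$ is a product measure). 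Likewise the Gaussian-volume constraints $\gamma_{\adimn}(\Omega_i)=1/3$ reduce to $\gamma_2(\Omega_i')=1/3$. Hence a maximizer in $\R^{\adimn}$ restricts to a maximizer of Problem \ref{prob2} with $m=3$ in $\R^2$, and conversely any planar maximizer lifts to one in $\R^{\adimn}$ with the same stability value. This is the mechanism by which "dimension reduction" turns an $\adimn$-dimensional optimization into the planar one.

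Then I would quote the main result of \cite{heilman12}: there is $\rho_0>0$ such that for all $0<\rho<\rho_0$, the maximizers of the three-set, equal-volume Standard Simplex Problem in $\R^2$ are exactly the cones over a regular simplex in $\R^2$, i.e. the three $120^\circ$ sectors meeting at the origin (unique up to rotation). Combining: any $\adimn$-dimensional maximizer, after the rotation and measure-zero change furnished by Theorem \ref{mainthm1}, equals $\Omega_i'\times\R^{\sdimn-1}$ where the $\Omega_i'$ are the planar simplex sectors; but $\Omega_i'\times\R^{\sdimn-1}$ is precisely the cone over a regular simplex in $\R^{\adimn}$ (the simplex $z_1,z_2,z_3$ lying in the $2$-plane, the decision region being constant in the orthogonal directions). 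Undoing the rotation, we conclude $\Omega_i=\{x:\langle x,z_i\rangle=\max_j\langle x,z_j\rangle\}$ up to measure zero, which is Conjecture \ref{conj2} for $m=3$. Translating back through \cite{isaksson11} gives Conjecture \ref{prob4} for $m=3$, $0<\rho<\rho_0$, which is Theorem \ref{main4}.

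The step I expect to be the genuine obstacle is none of the above gluing arguments — those are essentially bookkeeping once Theorem \ref{mainthm1} is in hand — but rather Theorem \ref{mainthm1} itself, whose proof (the calculus-of-variations / dimension-reduction argument occupying the bulk of the paper) is the hard analytic core; since we are permitted to assume it, the proof of Theorem \ref{main4} is short. A secondary, more technical point to be careful about: one must make sure the "measure-zero changes" in Theorem \ref{mainthm1} and the passage through the invariance principle of \cite{isaksson11} are compatible with the exact-equality conclusion of Conjecture \ref{conj2}; since noise stability is unaffected by measure-zero changes and the equivalence in \cite{isaksson11} is stated for the optimal values, this causes no real difficulty, but it should be remarked on. Finally, one should confirm that $\rho_0$ here is the same constant as in \cite{heilman12} and, crucially, is \emph{independent of $\adimn$} — which it is, precisely because Theorem \ref{mainthm1} collapses every dimension to the fixed planar problem.
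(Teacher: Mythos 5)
Your proposal is correct and follows exactly the paper's route: the paper derives Theorem \ref{main4} by observing that Theorem \ref{mainthm1} with $m=3$ reduces Problem \ref{prob2} in $\R^{\adimn}$ to the planar case $\adimn=2$, which is resolved for small $\rho>0$ by the main result of \cite{heilman12}, and then passing back to the discrete statement via the invariance-principle equivalence of \cite{isaksson11}. Your elaboration of the product-factorization of the noise stability and the volume constraints, and your remarks on measure-zero changes and the dimension-independence of $\rho_{0}$, are consistent with (indeed more detailed than) the paper's own brief derivation.
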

In \cite{heilman12} it is noted that $\rho_{0}=e^{-20\cdot 3^{10^{14}}}$ suffices in Theorem \ref{main4}.

We can also prove a version of Theorem \ref{mainthm1} when $\rho<0$.  See Theorem \ref{mainthm1n} and the discussion in Section \ref{negsec}.  One difficulty in proving Theorem \ref{mainthm1} directly for $\rho<0$ is that it is not a priori obvious that a minimizer of Problem \ref{prob2} exists in that case.

\subsection{Noninteractive Simulation of Correlated Distributions}

As mentioned above, Theorem \ref{mainthm1} answers a question in \cite{de17,de18} and \cite{ghazi18}.  Their interest in Theorem \ref{mainthm1} stems from the following problem.  Let $(X,Y)\in\R^{\sdimn}$ be a random vector.  Let $(X_{1},Y_{1}),(X_{2},Y_{2}),\ldots$ be i.i.d. copies of $(X,Y)$.  Suppose there are two players $A$ and $B$.  Player $A$ has access to $X_{1},X_{2},\ldots$ and player $B$ has access to $Y_{1},Y_{2},\ldots$.  Without communication, what joint distributions can players $A$ and $B$ jointly simulate?  For details on the relation of this problem to Theorem \ref{mainthm1}, see \cite{de17,de18} and \cite{ghazi18}.

\subsection{Outline of the Proof of the Structure Theorem}

In this section we outline the proof of Theorem \ref{mainthm1} in the case $m=2$.  The proof loosely follows that of a corresponding statement \cite{mcgonagle15,barchiesi16} for the Gaussian surface area (which was then adapted to multiple sets in \cite{milman18a,milman18b,heilman18}), with a few key differences.  For didactic purposes, we will postpone a discussion of technical difficulties (such as existence and regularity of a maximizer) to Section \ref{secpre}.

Fix $0<a<1$.  Suppose there exists $\Omega,\Omega^{c}\subset\R^{\adimn}$ are measurable sets maximizing
$$\int_{\R^{\adimn}}1_{\Omega}(x)T_{\rho}1_{\Omega}(x)\gamma_{\adimn}(x)dx,$$
subject to the constraint $\gamma_{\adimn}(\Omega)=a$.  A first variation argument (Lemma \ref{latelemma3} below) implies that $\Sigma\colonequals\partial\Omega$ is a level set of the Ornstein-Uhlenbeck operator applied to $1_{\Omega}$.  That is, there exists $c\in\R$ such that
\begin{equation}\label{zero1}
\Sigma=\{x\in\R^{\adimn}\colon T_{\rho}1_{\Omega}(x)=c\}.
\end{equation}
Since $\Sigma$ is a level set, a vector perpendicular to the level set is also perpendicular to $\Sigma$.  Denoting $N(x)\in\R^{\adimn}$ as the unit length exterior pointing normal vector to $x\in\partial\Omega$, \eqref{zero1} implies that
\begin{equation}\label{zero1.7}
\overline{\nabla}T_{\rho}1_{\Omega}(x)= -N(x)\vnorm{\overline{\nabla}T_{\rho}1_{\Omega}(x)}.
\end{equation}
(It is not obvious that there must be a negative sign here, but it follows from examining the second variation.)  We now observe how the noise stability of $\Omega$ changes as the set is translated infinitesimally.  Fix $v\in\R^{\adimn}$, and consider the variation of $\Omega$ induced by the constant vector field $v$.  That is, let $\Psi\colon\R^{\adimn}\times(-1,1)\to\R^{\adimn}$ such that $\Psi(x,0)=x$ and such that $\frac{\d}{\d s}|_{s=0}\Psi(x,s)=v$ for all $x\in\R^{\adimn},s\in(-1,1)$.  For any $s\in(-1,1)$, let $\Omega^{(s)}=\Psi(\Omega,s)$.    Note that $\Omega^{(0)}=\Omega$.  Denote $f(x)\colonequals \langle v,N(x)\rangle$ for all $x\in\Sigma$.  Then define
$$
S(f)(x)\colonequals (1-\rho^{2})^{-(\adimn)/2}(2\pi)^{-(\adimn)/2}\int_{\Sigma}f(y)e^{-\frac{\vnorm{y-\rho x}^{2}}{2(1-\rho^{2})}}\,\d y,\qquad\forall\,x\in\Sigma.
$$
A second variation argument (Lemma \ref{lemma7p} below) implies that, if $f$ is Gaussian volume-preserving, i.e. $\int_{\Sigma}f(x)\gamma_{\adimn}(x)\,\d x=0$, then
\begin{equation}\label{zero4.5}
\begin{aligned}
&\frac{1}{2}\frac{\d^{2}}{\d s^{2}}\Big|_{s=0}\int_{\R^{\adimn}}1_{\Omega^{(s)}}(x)T_{\rho}1_{\Omega^{(s)}}(x)\gamma_{\adimn}(x)\,\d x\\
&\qquad\qquad\qquad\qquad=\int_{\Sigma}\Big(S(f)(x)-\vnorm{\overline{\nabla}T_{\rho}1_{\Omega}(x)}f(x)\Big)f(x)\gamma_{\adimn}(x)\,\d x.
\end{aligned}
\end{equation}
Somewhat unexpectedly, the function $f(x)=\langle v,N(x)\rangle$ is almost an eigenfunction of the operator $S$ (by Lemma \ref{treig}), in the sense that
\begin{equation}\label{zero5}
S(f)(x)=\frac{1}{\rho}f(x)\vnorm{\overline{\nabla} T_{\rho}1_{\Omega}(x)},\qquad\forall\,x\in\Sigma.
\end{equation}
Equation \eqref{zero5} is the \textit{key fact} use in the proof of the main theorem, Theorem \ref{mainthm1}.  Equation \eqref{zero5} follows from \eqref{zero1.7} and the divergence theorem (see Lemma \ref{treig} for a proof of \eqref{zero5}.)  Plugging \eqref{zero5} into \eqref{zero4.5},
\begin{equation}\label{zero3}
\begin{aligned}
\int_{\Sigma}\langle v,N(x)\rangle\gamma_{\adimn}(x)\,\d x=0\quad\Longrightarrow\quad
&\frac{1}{2}\frac{\d^{2}}{\d s^{2}}\Big|_{s=0}\int_{\R^{\adimn}}1_{\Omega^{(s)}}(x)T_{\rho}1_{\Omega^{(s)}}(x)\gamma_{\adimn}(x)\,\d x\\
&\qquad=\Big(\frac{1}{\rho}-1\Big)\int_{\Sigma}\langle v,N(x)\rangle^{2}\vnorm{\overline{\nabla}T_{\rho}1_{\Omega}(x)}\gamma_{\adimn}(x)\,\d x.
\end{aligned}
\end{equation}
The set
$$V\colonequals\Big\{v\in\R^{\adimn}\colon \int_{\Sigma}\langle v,N(x)\rangle\gamma_{\adimn}(x)\,\d x=0\Big\}$$
has dimension at least $\sdimn$, by the rank-nullity theorem.  Since $\Omega$ maximizes noise stability, the quantity on the right of \eqref{zero3} must be non-positive for all $v\in V$, implying that $f=0$ on $\Sigma$ (except possibly on a set of measure zero on $\Sigma$).  (One can show that $\vnorm{\overline{\nabla}T_{\rho}1_{\Omega}(x)}>0$ for all $x\in\Sigma$.  See Lemma \ref{lemma7r}.)  That is, for all $v\in V$, $\langle v,N(x)\rangle=0$ for all $x\in\Sigma$ (except possibly on a set of measure zero on $\Sigma$).  Since $V$ has dimension at least $\sdimn$, there exists a measurable discrete set $\Omega'\subset\R$ such that $\Omega=\Omega'\times\R^{\sdimn}$ after rotating $\Omega$, concluding the proof of Theorem \ref{mainthm1} in the case $m=2$.

Theorem \ref{mainthm1} follows from the realization that all of the above steps still hold for arbitrary $m$ in Conjecture \ref{prob2}.  In particular, the key lemma \eqref{zero5} still holds.  See Lemmas \ref{treig} and \ref{treig2} below.

\begin{remark}
In the case that we replace the Gaussian noise stability of $\Omega$ with the Euclidean heat content
$$
\int_{\R^{\adimn}}1_{\Omega}(x)P_{t}1_{\Omega}(x)\,\d x,\qquad\forall\,t>0
$$
$$
P_{t}f(x)
\colonequals\int_{\R^{\adimn}}f(x+y\sqrt{t})\gamma_{\adimn}(y)\,\d y,
\qquad\forall x\in\R^{\adimn},\qquad\forall\,f\colon\R^{\adimn}\to[0,1],$$
then the corresponding operator $\overline{S}$ from the second variation of the Euclidean heat content satisfies
$$
\overline{S}(f)(x)\colonequals t^{(\adimn)/2}(2\pi)^{-(\adimn)/2}\int_{\Sigma}f(y)e^{-\frac{\vnorm{y- x}^{2}}{2t}}\,\d y,\qquad\forall\,x\in\Sigma,
$$
and then the analogue of \eqref{zero4.5} for $f(x)\colonequals\langle v,N(x)\rangle$ is
$$\overline{S}(f)(x)=f(x)\vnorm{\overline{\nabla} P_{t}1_{\Omega}(x)},\qquad\forall\,x\in\Sigma,$$
so that the second variation corresponding to $f=\langle v,N\rangle$ is automatically zero.  This fact is expected, since a translation does not change the Euclidean heat content.  However, this example demonstrates that the key property of the above proof is exactly \eqref{zero5}.  More specifically, $f$ is an ``almost eigenfunction'' of $S$ with ``eigenvalue'' $1/\rho$ that is larger than $1$.  It seems plausible that other semigroups could also satisfy an identity such as \eqref{zero5}, since \eqref{zero5} seems related to hypercontractivity.  We leave this open for further research.
\end{remark}%

\section{Existence and Regularity}

\subsection{Preliminaries and Notation}\label{secpre}

We say that $\Sigma\subset\R^{\adimn}$ is an $\sdimn$-dimensional $C^{\infty}$ manifold with boundary if $\Sigma$ can be locally written as the graph of a $C^{\infty}$ function on a relatively open subset of $\{(x_{1},\ldots,x_{\sdimn})\in\R^{\sdimn}\colon x_{\sdimn}\geq0\}$.  For any $(\adimn)$-dimensional $C^{\infty}$ manifold $\Omega\subset\R^{\adimn}$ such that $\partial\Omega$ itself has a boundary, we denote
\begin{equation}\label{c0def}
\begin{aligned}
C_{0}^{\infty}(\Omega;\R^{\adimn})
&\colonequals\{f\colon \Omega\to\R^{\adimn}\colon f\in C^{\infty}(\Omega;\R^{\adimn}),\, f(\partial\partial \Omega)=0,\\
&\qquad\qquad\qquad\exists\,r>0,\,f(\Omega\cap(B(0,r))^{c})=0\}.
\end{aligned}
\end{equation}
We also denote $C_{0}^{\infty}(\Omega)\colonequals C_{0}^{\infty}(\Omega;\R)$.  We let $\mathrm{div}$ denote the divergence of a vector field in $\R^{\adimn}$.  For any $r>0$ and for any $x\in\R^{\adimn}$, we let $B(x,r)\colonequals\{y\in\R^{\adimn}\colon\vnormt{x-y}\leq r\}$ be the closed Euclidean ball of radius $r$ centered at $x\in\R^{\adimn}$.  Here $\partial\partial\Omega$ refers to the $(\sdimn-1)$-dimensional boundary of $\Omega$.

\begin{definition}[\embolden{Reduced Boundary}]\label{rbdef}
A measurable set $\Omega\subset\R^{\adimn}$ has \embolden{locally finite surface area} if, for any $r>0$,
$$\sup\left\{\int_{\Omega}\mathrm{div}(X(x))\,\d x\colon X\in C_{0}^{\infty}(B(0,r),\R^{\adimn}),\, \sup_{x\in\R^{\adimn}}\vnormt{X(x)}\leq1\right\}<\infty.$$
Equivalently, $\Omega$ has locally finite surface area if $\nabla 1_{\Omega}$ is a vector-valued Radon measure such that, for any $x\in\R^{\adimn}$, the total variation
$$
\vnormt{\nabla 1_{\Omega}}(B(x,1))
\colonequals\sup_{\substack{\mathrm{partitions}\\ C_{1},\ldots,C_{m}\,\mathrm{of}\,B(x,1) \\ m\geq1}}\sum_{i=1}^{m}\vnormt{\nabla 1_{\Omega}(C_{i})}
$$
is finite \cite{cicalese12}.  If $\Omega\subset\R^{\adimn}$ has locally finite surface area, we define the \embolden{reduced boundary} $\redb \Omega$ of $\Omega$ to be the set of points $x\in\R^{\adimn}$ such that
$$N(x)\colonequals-\lim_{r\to0^{+}}\frac{\nabla 1_{\Omega}(B(x,r))}{\vnormt{\nabla 1_{\Omega}}(B(x,r))}$$
exists, and it is exactly one element of $S^{\sdimn}\colonequals\{x\in\R^{\adimn}\colon\vnorm{x}=1\}$.
\end{definition}

The reduced boundary $\redb\Omega$ is a subset of the topological boundary $\partial\Omega$.  Also, $\redb\Omega$ and $\partial\Omega$ coincide with the support of $\nabla 1_{\Omega}$, except for a set of $\sdimn$-dimensional Hausdorff measure zero.

Let $\Omega\subset\R^{\adimn}$ be an $(\adimn)$-dimensional $C^{2}$ submanifold with reduced boundary $\Sigma\colonequals\redb \Omega$.  Let $N\colon\redA\to S^{\sdimn}$ be the unit exterior normal to $\redA$.  Let $X\in C_{0}^{\infty}(\R^{\adimn},\R^{\adimn})$.  We write $X$ in its components as $X=(X_{1},\ldots,X_{\adimn})$, so that $\mathrm{div}X=\sum_{i=1}^{\adimn}\frac{\partial}{\partial x_{i}}X_{i}$.  Let $\Psi\colon\R^{\adimn}\times(-1,1)\to\R^{\adimn}$ such that
\begin{equation}\label{nine2.3}
\Psi(x,0)=x,\qquad\qquad\frac{\d}{\d s}\Psi(x,s)=X(\Psi(x,s)),\quad\forall\,x\in\R^{\adimn},\,s\in(-1,1).
\end{equation}
For any $s\in(-1,1)$, let $\Omega^{(s)}\colonequals\Psi(\Omega,s)$.  Note that $\Omega^{(0)}=\Omega$.  Let $\Sigma^{(s)}\colonequals\redb\Omega^{(s)}$, $\forall$ $s\in(-1,1)$.
\begin{definition}
We call $\{\Omega^{(s)}\}_{s\in(-1,1)}$ as defined above a \embolden{variation} of $\Omega\subset\R^{\adimn}$.  We also call $\{\Sigma^{(s)}\}_{s\in(-1,1)}$ a \embolden{variation} of $\Sigma=\redb\Omega$.
\end{definition}

For any $x\in\R^{\adimn}$ and any $s\in(-1,1)$, define
\begin{equation}\label{two9c}
V(x,s)\colonequals\int_{\Omega^{(s)}}G(x,y)\,\d y.
\end{equation}

Below, when appropriate, we let $\,\d x$ denote Lebesgue measure, restricted to a surface $\redA\subset\R^{\adimn}$.

\begin{lemma}[\embolden{Existence of a Maximizer}]\label{existlem}
Let $0<\rho<1$ and let $m\geq2$.  Then there exist measurable sets $\Omega_{1},\ldots,\Omega_{m}$ maximizing Problem \ref{prob2}.
\end{lemma}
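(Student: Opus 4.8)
The plan is to realize the noise-stability functional as a weakly continuous (or at least weakly upper semicontinuous) function on a weakly compact set of $m$-tuples of indicator functions, and then invoke the direct method of the calculus of variations. Concretely, I would parametrize a candidate partition $(\Omega_1,\dots,\Omega_m)$ of $\R^{\adimn}$ with $\gamma_{\adimn}(\Omega_i)=a_i$ by the vector of indicator functions $(1_{\Omega_1},\dots,1_{\Omega_m})$, viewed as elements of $L^\infty(\R^{\adimn},\gamma_{\adimn})$ taking values in $\{0,1\}$ and summing (a.e.) to $1$. The objective to maximize is $\mathcal{F}(\Omega_1,\dots,\Omega_m)\colonequals\sum_{i=1}^m\langle 1_{\Omega_i},T_\rho 1_{\Omega_i}\rangle_{L^2(\gamma_{\adimn})}$.

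First I would take a maximizing sequence $(1_{\Omega_1^{(k)}},\dots,1_{\Omega_m^{(k)}})_{k\geq1}$. Since these functions are uniformly bounded in $L^\infty(\gamma_{\adimn})\subset L^2(\gamma_{\adimn})$, by Banach--Alaoglu (in the form of weak sequential compactness of bounded sets in the Hilbert space $L^2(\gamma_{\adimn})$, or weak-$*$ compactness in $L^\infty$ against $L^1(\gamma_{\adimn})$, using that $\gamma_{\adimn}$ is a finite measure), I can pass to a subsequence so that $1_{\Omega_i^{(k)}}\rightharpoonup g_i$ weakly for each $1\leq i\leq m$, for some $g_i\in L^\infty(\gamma_{\adimn})$ with $0\leq g_i\leq 1$ a.e. and $\sum_{i=1}^m g_i=1$ a.e. (the sum condition and the pointwise bounds pass to the weak limit because the constraint set is convex and closed, hence weakly closed). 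Weak convergence also preserves the volume constraints: $\int g_i\,\d\gamma_{\adimn}=\lim_k\gamma_{\adimn}(\Omega_i^{(k)})=a_i$. Next, the crucial analytic input: $T_\rho\colon L^2(\gamma_{\adimn})\to L^2(\gamma_{\adimn})$ is a compact operator (its kernel, visible in \eqref{oudef}, is Hilbert--Schmidt against $\gamma_{\adimn}\otimes\gamma_{\adimn}$ for $\rho\in(0,1)$; equivalently $T_\rho$ is diagonalized by Hermite polynomials with eigenvalues $\rho^{|\alpha|}\to 0$). Compactness upgrades $1_{\Omega_i^{(k)}}\rightharpoonup g_i$ to $T_\rho 1_{\Omega_i^{(k)}}\to T_\rho g_i$ \emph{strongly} in $L^2(\gamma_{\adimn})$, and a strong-times-weak pairing converges, so $\langle 1_{\Omega_i^{(k)}},T_\rho 1_{\Omega_i^{(k)}}\rangle\to\langle g_i,T_\rho g_i\rangle$. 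Hence $\mathcal{F}$ is weakly sequentially continuous on the constraint set and $(g_1,\dots,g_m)$ attains the supremum among all $[0,1]$-valued tuples summing to $1$ with the prescribed integrals.

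Finally I would argue that the relaxed maximizer is in fact attained at $\{0,1\}$-valued functions, i.e. at an honest partition. Here I would use a ``bang-bang'' / convexity argument: the relaxed problem is maximization of a quadratic form over a convex set, and the quadratic form $q(h)\colonequals\sum_i\langle h_i,T_\rho h_i\rangle$ is convex in $h=(h_1,\dots,h_m)$ because $T_\rho$ is positive semidefinite (all Hermite eigenvalues $\rho^{|\alpha|}>0$ for $\rho\in(0,1)$). A convex function on a convex set with extreme points attains its maximum at an extreme point; and the extreme points of $\{h\in L^\infty(\gamma_{\adimn};[0,1])^m:\sum_i h_i=1,\ \int h_i=a_i\}$ are precisely the indicator vectors of partitions (a standard fact: if some $h_i$ were strictly between $0$ and $1$ on a positive-measure set, one could perturb within the constraint set while keeping volumes fixed by a measure-swapping argument across two coordinates, contradicting extremality). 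To make the ``attains its max at an extreme point'' step rigorous in infinite dimensions I would instead argue directly: given the relaxed maximizer $(g_i)$, if it is not $\{0,1\}$-valued, use the convexity of $q$ along a segment $g+t\psi$, $t\in[-1,1]$, with a volume-preserving perturbation $\psi$ supported where two of the $g_i$ are fractional, to conclude $q$ is non-decreasing toward an endpoint, producing a new maximizer that is ``more extreme''; iterating (or taking a maximal such perturbation via Zorn / an exhaustion) yields a genuine partition with the same value. The main obstacle is precisely this last rounding step: ensuring the bang-bang argument can be carried out while \emph{simultaneously} preserving all $m$ volume constraints and staying inside the simplex $\Delta_m$ pointwise — the cleanest route is to perturb only two coordinates $g_i,g_j$ at a time by $\pm\psi$ with $\int\psi\,\d\gamma_{\adimn}=0$ and $\psi$ supported on $\{0<g_i<1\}\cap\{0<g_j<1\}$, which keeps the other volumes and the pointwise sum fixed, and to check that such nonzero $\psi$ exists whenever $(g_i)$ is not a partition. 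Regularity of the maximizer is not claimed here and is deferred to later sections.
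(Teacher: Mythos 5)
Your proposal is correct and follows essentially the same route as the paper: relax to $\Delta_m$-valued functions, use weak compactness of the constraint set together with weak continuity of the functional (via compactness/positive semidefiniteness of $T_{\rho}$) to get a relaxed maximizer, then use convexity of the quadratic form $h\mapsto\sum_{i}\langle h_{i},T_{\rho}h_{i}\rangle$ to conclude the maximum is attained at an extreme point, i.e.\ at a genuine partition. If anything, you are more careful than the paper about carrying the volume constraints $\int g_{i}\,\d\gamma_{\adimn}=a_{i}$ through the weak limit and through the extreme-point/rounding step, which the paper's stated set $D_{0}$ glosses over.
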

\begin{proof}
Define $\Delta_{m}$ as in \eqref{deltadef}.  Let $f\colon\R^{\adimn}\to\Delta_{m}$.  We write $f$ in its components as $f=(f_{1},\ldots,f_{m})$.  The set $D_{0}\colonequals\{f\colon\R^{\adimn}\to\Delta_{m}\}$ is norm closed, bounded and convex, therefore it is weakly compact and convex.  Consider the function
$$C(f)\colonequals\sum_{i=1}^{m}\int_{\R^{\adimn}}f_{i}(x)T_{\rho}f_{i}(x)\gamma_{\adimn}(x)\,\d x.$$
This function is weakly continuous on $D_{0}$, and $D_{0}$ is weakly compact, so there exists $\widetilde{f}\in D_{0}$ such that $C(\widetilde{f})=\max_{f\in D_{0}}C(f)$.  Moreover, $C$ is convex since for any $0<t<1$ and for any $f,g\in D_{0}$,
\begin{flalign*}
&tC(f)+(1-t)C(g)-C(tf+(1-t)g)\\
&\qquad=\sum_{i=1}^{m}\int_{\R^{\adimn}}
\Big(tf_{i}(x)T_{\rho}f_{i}(x)+(1-t)g_{i}(x)T_{\rho}g_{i}(x)\\
&\qquad\qquad\qquad\qquad\qquad-(tf_{i}(x)+(1-t)g_{i}(x))T_{\rho}[tf_{i}(x)+(1-t)g_{i}(x)] \Big)\gamma_{\adimn}(x)\,\d x\\
&\qquad=t(1-t)\sum_{i=1}^{m}\int_{\R^{\adimn}}
\Big((f_{i}(x)-g_{i}(x))T_{\rho}[f_{i}(x)-g_{i}(x)]\Big)\gamma_{\adimn}(x)\,\d x
\geq0.
\end{flalign*}
Here we used that
\begin{equation}\label{zero8.0}
\int_{\R^{\adimn}} h(x)T_{\rho}h(x)\gamma_{\adimn}(x)\,\d x=\int_{\R^{\adimn}} (T_{\sqrt{\rho}}h(x))^{2}\gamma_{\adimn}(x)\,\d x\geq0,
\end{equation}
for all measurable $h\colon\R^{\adimn}\to[-1,1]$.

Since $C$ is convex, its maximum must be achieved at an extreme point of $D_{0}$.  Let $e_{1},\ldots,e_{m}$ denote the standard basis of $\R^{m}$, so that $f$ takes its values in $\{e_{1},\ldots,e_{m}\}$. Then, for any $1\leq i\leq m$, define $\Omega_{i}\colonequals\{x\in\R^{\adimn}\colon f(x)=e_{i}\}$, so that $f_{i}=1_{\Omega_{i}}$ $\forall$ $1\leq i\leq m$.
\end{proof}


\begin{lemma}[\embolden{Regularity of a Maximizer}]\label{reglem}
Let $\Omega_{1},\ldots,\Omega_{m}\subset\R^{\adimn}$ be the measurable sets maximizing Problem \ref{prob2}, guaranteed to exist by Lemma \ref{existlem}.  Then the sets $\Omega_{1},\ldots,\Omega_{m}$ have locally finite surface area.  Moreover, for all $1\leq i\leq m$ and for all $x\in\partial\Omega_{i}$, there exists a neighborhood $U$ of $x$ such that $U\cap \partial\Omega_{i}$ is a finite union of $C^{\infty}$ $\sdimn$-dimensional manifolds.
\end{lemma}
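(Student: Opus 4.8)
My plan is to extract from the maximality in Problem \ref{prob2} an explicit ``bathtub'' description of the optimal partition in terms of the real-analytic functions $T_{\rho}1_{\Omega_{i}}$, and then to read off the regularity of $\partial\Omega_{i}$ from the local structure theory of real-analytic sets together with a standard measure-theoretic criterion for finite perimeter. The first (and main) step is to show that, after deleting a Lebesgue-null set, there are constants $c_{1},\dots,c_{m}\in\R$ such that
\[
\Omega_{i}=\bigl\{x\in\R^{\adimn}\colon T_{\rho}1_{\Omega_{i}}(x)+c_{i}>T_{\rho}1_{\Omega_{j}}(x)+c_{j}\ \ \forall\,j\neq i\bigr\},\qquad 1\le i\le m.
\]
Granting this, each $\Omega_{i}$ is open, and, writing $g_{ij}\colonequals T_{\rho}1_{\Omega_{i}}-T_{\rho}1_{\Omega_{j}}+c_{i}-c_{j}$, any $x\in\partial\Omega_{i}$ lies in $\overline{\Omega_{i}}$ (so $g_{ij}(x)\ge0$ for all $j$) but not in the open set $\Omega_{i}$ (so $g_{ij_{0}}(x)\le0$ for some $j_{0}$); hence $g_{ij_{0}}(x)=0$, so $\partial\Omega_{i}\subset\bigcup_{j\neq i}\{g_{ij}=0\}$, and it remains only to analyze the zero sets of the $g_{ij}$.

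To prove the displayed description I would use a volume-preserving exchange variation. For measurable $A\subset\Omega_{i}$, $B\subset\Omega_{j}$ with $i\neq j$ and $\gamma_{\adimn}(A)=\gamma_{\adimn}(B)$, interchanging $A$ and $B$ between $\Omega_{i}$ and $\Omega_{j}$ respects every constraint of Problem \ref{prob2} and, by self-adjointness of $T_{\rho}$ and \eqref{zero8.0}, changes the objective by
\[
2\int_{B}(T_{\rho}1_{\Omega_{i}}-T_{\rho}1_{\Omega_{j}})\gamma_{\adimn}(x)\,\d x-2\int_{A}(T_{\rho}1_{\Omega_{i}}-T_{\rho}1_{\Omega_{j}})\gamma_{\adimn}(x)\,\d x+2\vnorm{T_{\sqrt{\rho}}(1_{B}-1_{A})}_{L^{2}(\gamma_{\adimn})}^{2}.
\]
Localizing $A$ and $B$ near Lebesgue density points inside a fixed compact set, Minkowski's integral inequality gives $\vnorm{T_{\sqrt{\rho}}1_{A}}_{L^{2}(\gamma_{\adimn})}\le\gamma_{\adimn}(A)\sup_{y\in A}\vnorm{k_{\sqrt{\rho}}(\cdot,y)}_{L^{2}(\gamma_{\adimn})}$, with $k_{\sqrt{\rho}}$ the kernel of $T_{\sqrt{\rho}}$, so the last term is $O(\gamma_{\adimn}(A)^{2})$, while the first two terms are of order $\gamma_{\adimn}(A)$ with sign determined by whether $T_{\rho}1_{\Omega_{i}}-T_{\rho}1_{\Omega_{j}}$ is larger at the density point of $B$ or of $A$. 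Maximality therefore forbids any strict bathtub violation on a positive-measure set, and the usual Kuhn--Tucker bookkeeping over all pairs $(i,j)$ yields the constants $c_{i}$. Finally, the leftover ``tie set'' (where two of the $T_{\rho}1_{\Omega_{k}}+c_{k}$ attain the maximum simultaneously) is $\gamma_{\adimn}$-null: otherwise $T_{\rho}(1_{\Omega_{i}}-1_{\Omega_{j}})$ would be constant on a positive-measure set, hence (being real-analytic on the connected space $\R^{\adimn}$) globally constant, whereupon injectivity of $T_{\rho}$ (its eigenvalues are $\rho^{k}\neq0$) would force $1_{\Omega_{i}}-1_{\Omega_{j}}$ to be a.e.\ constant, contradicting $0<a_{i},a_{j}$ together with disjointness of $\Omega_{i}$ and $\Omega_{j}$.

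With the description in hand, the rest is comparatively routine. Each $T_{\rho}1_{\Omega_{i}}$ is real-analytic: by \eqref{oudef} it is the restriction to $\R^{\adimn}$ of an entire function on $\C^{\adimn}$, since the kernel $y\mapsto e^{-\vnorm{y-\rho z}^{2}/(2(1-\rho^{2}))}$ extends holomorphically in $z$ with integrable bounds that are locally uniform in $z$. Hence each $g_{ij}$ is real-analytic and, by the tie-set argument, not identically zero. Fixing $x\in\partial\Omega_{i}$ and a ball $U\ni x$ on which each $g_{ij}$ is either bounded away from $0$ (discard such $j$) or vanishes at $x$, the local structure theory of real-analytic sets shows that, after shrinking $U$, each surviving $\{g_{ij}=0\}\cap U$ is a finite union of connected $C^{\infty}$ embedded submanifolds of $\R^{\adimn}$ of dimension at most $\sdimn$, with $\sdimn$-dimensional part the regular locus $\{g_{ij}=0,\,\overline{\nabla}g_{ij}\neq0\}$; unioning over the finitely many surviving $j$ gives the stated local description of $\partial\Omega_{i}\cap U$, and the nonvanishing of $\overline{\nabla}T_{\rho}1_{\Omega}$ on the reduced boundary proved later (Lemma \ref{lemma7r}, via the second variation) shows the reduced boundary is an honest $\sdimn$-dimensional $C^{\infty}$ manifold. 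Locally finite surface area then follows because a bounded portion of an analytic variety of dimension at most $\sdimn$ has finite $\sdimn$-dimensional Hausdorff measure, so $\mathcal{H}^{\sdimn}(\partial\Omega_{i}\cap B(0,r))<\infty$ for all $r>0$, and Federer's criterion applies since the essential boundary of $\Omega_{i}$ is contained in $\partial\Omega_{i}$.

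I expect the exchange-variation step to be the main obstacle: converting maximality into pointwise bathtub inequalities with genuine Lagrange constants, and in particular nailing down the kernel estimate so that the quadratic error is of strictly lower order than the linear gain, so that a positive-measure bathtub violation produces an improving competitor. The real-analyticity of $T_{\rho}1_{\Omega_{i}}$ and the appeal to the structure theory of analytic sets should cause little trouble, apart from the cosmetic point of passing from ``finite union of submanifolds of dimension at most $\sdimn$'' to the clean $\sdimn$-dimensional statement, which is ultimately underwritten by Lemma \ref{lemma7r}.
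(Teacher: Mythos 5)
Your proposal is correct, and its first half is essentially the paper's argument: the measure-preserving exchange of $A\subset\Omega_{i}$ and $B\subset\Omega_{j}$ near Lebesgue density points, with the change in the objective equal to a linear term plus $2\vnormtf{T_{\sqrt{\rho}}(1_{B}-1_{A})}_{L^{2}(\gamma_{\adimn})}^{2}$, is exactly the computation the paper uses to establish \eqref{zero8}. Two remarks on that step. First, the obstacle you single out --- controlling the quadratic term against the linear gain --- is not actually there: the quadratic term is \emph{nonnegative} (this is \eqref{zero8.0}), so whenever the linear term is strictly positive the whole variation is strictly positive and maximality is already contradicted; no kernel estimate is needed. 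Second, you do not need the single potentials $c_{i}$ (whose existence requires ruling out improving \emph{cyclic} exchanges, not just pairwise swaps, a point your ``Kuhn--Tucker bookkeeping'' elides); the pairwise constants $c_{ij}$ already give $\Omega_{i}=\{x\colon T_{\rho}(1_{\Omega_{i}}-1_{\Omega_{j}})(x)>c_{ij}\ \forall\,j\neq i\}$ up to null sets, hence $\partial\Omega_{i}\subset\bigcup_{j\neq i}\{g_{ij}=0\}$, which is all the regularity argument consumes. Where you genuinely diverge is the second half: the paper treats $T_{\rho}(1_{\Omega_{i}}-1_{\Omega_{j}})$ as a caloric function and invokes strong unique continuation for the heat equation together with the Hardt--Simon/Lin nodal-set decomposition and Chen's Hausdorff-dimension bound, whereas you observe that $T_{\rho}1_{\Omega_{i}}$ is the restriction of an entire function, exclude $g_{ij}\equiv0$ via injectivity of $T_{\rho}$ (eigenvalues $\rho^{k}\neq0$ plus $a_{i},a_{j}>0$), and then apply the {\L}ojasiewicz structure theorem for real-analytic varieties, with Federer's criterion supplying locally finite perimeter. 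This route is sound and arguably more self-contained here, since real-analyticity is immediate from the Gaussian kernel and one never needs unique continuation for merely smooth solutions. Both proofs share the same cosmetic looseness about strata of dimension strictly less than $\sdimn$; your proposed repair via Lemma \ref{lemma7r} should not be used inside this proof (that lemma presupposes the present regularity), but it is not needed for the statement as written.
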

\begin{proof}
This follows from a first variation argument and the strong unique continuation property for the heat equation.  We first claim that there exist constants $(c_{ij})_{1\leq i<j\leq m}$ such that
\begin{equation}\label{zero8}
\Omega_{i}\supset\{x\in\R^{\adimn}\colon T_{\rho}1_{\Omega_{i}}(x)>T_{\rho}1_{\Omega_{j}}(x)+c_{ij},\,\forall\,j\in\{1,\ldots,m\}\setminus\{i\}\},\qquad\forall\,1\leq i\leq m.
\end{equation}

By the Lebesgue density theorem \cite[1.2.1, Proposition 1]{stein70}, we may assume that, for all $i\in\{1,\ldots,k\}$, if $y\in \Omega_{i}$, then we have $\lim_{r\to0}\gamma_{\adimn}(\Omega_{i}\cap B(y,r))/\gamma_{\adimn}(B(y,r))=1$.

We prove \eqref{zero8} by contradiction.  Suppose there exist $c\in\R$, $j,k\in\{1,\ldots,m\}$ with $j\neq k$ and there exists $y\in\Omega_{j}$ and $z\in\Omega_{k}$ such that
$$T_{\rho}(1_{\Omega_{j}}-1_{\Omega_{k}})(y)<c,\qquad T_{\rho}(1_{\Omega_{j}}-1_{\Omega_{k}})(z)>c.$$
By \eqref{oudef}, $T_{\rho}(1_{\Omega_{j}}-1_{\Omega_{k}})(x)$ is a continuous function of $x$.  And by the Lebsgue density theorem, there exist disjoint measurable sets $U_{j},U_{k}$ with positive Lebesgue measure such that $U_{j}\subset\Omega_{j},U_{k}\subset\Omega_{k}$ such that $\gamma_{\adimn}(U_{j})=\gamma_{\adimn}(U_{k})$ and such that
\begin{equation}\label{zero9.0}
T_{\rho}(1_{\Omega_{j}}-1_{\Omega_{k}})(y')<c,\,\,\forall\,y'\in U_{j},\qquad
T_{\rho}(1_{\Omega_{j}}-1_{\Omega_{k}})(y')>c,\,\,\forall\,y'\in U_{k}.
\end{equation}
We define a new partition of $\R^{\adimn}$ such that $\widetilde{\Omega}_{j}\colonequals U_{k}\cup \Omega_{j}\setminus U_{j}$, $\widetilde{\Omega}_{k}\colonequals U_{j}\cup \Omega_{k}\setminus U_{k}$, and $\widetilde{\Omega}_{i}\colonequals\Omega_{i}$ for all $i\in\{1,\ldots,m\}\setminus\{j,k\}$.  Then
\begin{flalign*}
&\sum_{i=1}^{m}\int_{\R^{\adimn}}1_{\widetilde{\Omega}_{i}}(x)T_{\rho}1_{\widetilde{\Omega}_{i}}(x)\gamma_{\adimn}(x)\,\d x
-\sum_{i=1}^{m}\int_{\R^{\adimn}}1_{\Omega_{i}}(x)T_{\rho}1_{\Omega_{i}}(x)\gamma_{\adimn}(x)\,\d x\\
&\qquad=\int_{\R^{\adimn}}1_{\widetilde{\Omega}_{j}}(x)T_{\rho}1_{\widetilde{\Omega}_{j}}(x)\gamma_{\adimn}(x)\,\d x
-\int_{\R^{\adimn}}1_{\Omega_{j}}(x)T_{\rho}1_{\Omega_{j}}(x)\gamma_{\adimn}(x)\,\d x\\
&\qquad\qquad\qquad+\int_{\R^{\adimn}}1_{\widetilde{\Omega}_{k}}(x)T_{\rho}1_{\widetilde{\Omega}_{k}}(x)\gamma_{\adimn}(x)\,\d x
-\int_{\R^{\adimn}}1_{\Omega_{k}}(x)T_{\rho}1_{\Omega_{k}}(x)\gamma_{\adimn}(x)\,\d x\\
&\qquad=\int_{\R^{\adimn}}[1_{\Omega_{j}}-1_{U_{j}}+1_{U_{k}}](x)T_{\rho}[1_{\Omega_{j}}-1_{U_{j}}+1_{U_{k}}]\gamma_{\adimn}(x)\,\d x
\\
&\qquad\qquad\qquad+\int_{\R^{\adimn}}[1_{\Omega_{k}}-1_{U_{k}}+1_{U_{j}}]T_{\rho}[1_{\Omega_{k}}-1_{U_{k}}+1_{U_{j}}]\gamma_{\adimn}(x)\,\d x\\
&\qquad\qquad\qquad-\int_{\R^{\adimn}}1_{\Omega_{j}}(x)T_{\rho}1_{\Omega_{j}}(x)\gamma_{\adimn}(x)\,\d x-\int_{\R^{\adimn}}1_{\Omega_{k}}(x)T_{\rho}1_{\Omega_{k}}(x)\gamma_{\adimn}(x)\,\d x\\
&\qquad=2\int_{\R^{\adimn}}[-1_{U_{j}}+1_{U_{k}}](x)T_{\rho}[1_{\Omega_{j}}-1_{\Omega_{k}}]\gamma_{\adimn}(x)\,\d x\\
&\qquad\qquad\qquad+2\int_{\R^{\adimn}}[1_{U_{j}}-1_{U_{k}}]T_{\rho}[1_{U_{j}}-1_{U_{k}}]\gamma_{\adimn}(x)\,\d x
\stackrel{\eqref{zero9.0}\wedge\eqref{zero8.0}}{>}0.
\end{flalign*}
This contradicts the maximality of $\Omega_{1},\ldots,\Omega_{m}$.  We conclude that \eqref{zero8} holds.

We now fix $1\leq i<j\leq m$ and we upgrade \eqref{zero8} by examining the level sets of
$$T_{\rho}(1_{\Omega_{i}}-1_{\Omega_{j}})(x),\qquad\forall\,x\in\R^{\adimn}.$$
Fix $c\in\R$ and consider the level set
$$\Sigma\colonequals\{x\in\R^{\adimn}\colon T_{\rho}(1_{\Omega_{i}}-1_{\Omega_{j}})(x)=c \}.$$
This level set has Hausdorff dimension at most $\sdimn$ by \cite[Theorem 2.3]{chen98}.

From the Strong Unique Continuation Property for the heat equation \cite{lin90}, $T_{\rho}(1_{\Omega_{i}}-1_{\Omega_{j}})(x)$ does not vanish to infinite order at any $x\in\R^{\adimn}$, so the argument of \cite[Lemma 1.9]{hardt89} (see \cite[Proposition 1.2]{lin94} and also \cite[Theorem 2.1]{chen98}) shows that in a neighborhood of each $x\in\Sigma$, $\Sigma$ can be written as a finite union of $C^{\infty}$ manifolds.  That is, there exists a neighborhood $U$ of $x$ and there exists an integer $k\geq1$ such that
$$U\cap\Sigma=\cup_{i=1}^{k}\{y\in U\colon D^{i}T_{\rho}(1_{\Omega_{i}}-1_{\Omega_{j}})(x)\neq 0,\,\, D^{j}T_{\rho}(1_{\Omega_{i}}-1_{\Omega_{j}})(x)=0,\,\,\forall\,1\leq j\leq i-1\}.$$
Here $D^{i}$ denotes the array of all iterated partial derivatives of order $i\geq1$.  We therefore have
$$\Sigma_{ij}\colonequals(\redb\Omega_{i})\cap(\redb\Omega_{j})\supset\{x\in\R^{\adimn}\colon T_{\rho}(1_{\Omega_{i}}-1_{\Omega_{j}})(x)=c_{ij} \},$$
and the Lemma follows.
\end{proof}

From Lemma \ref{reglem} and Definition \ref{rbdef}, for all $1\leq i<j\leq m$, if $x\in\Sigma_{ij}$, then the unit normal vector $N_{ij}(x)\in\R^{\adimn}$ that points from $\Omega_{i}$ into $\Omega_{j}$ is well-defined on $\Sigma_{ij}$, $\big((\partial\Omega_{i})\cap(\partial\Omega_{j})\big)\setminus\Sigma_{ij}$ has Hausdorff dimension at most $\sdimn-1$, and
\begin{equation}\label{zero11}
N_{ij}(x)=\pm\frac{\overline{\nabla} T_{\rho}(1_{\Omega_{i}}-1_{\Omega_{j}})(x)}{\vnorm{\overline{\nabla} T_{\rho}(1_{\Omega_{i}}-1_{\Omega_{j}})(x)}},\qquad\forall\,x\in\Sigma_{ij}.
\end{equation}
In Lemma \ref{lemma7p} below we will show that the negative sign holds in \eqref{zero11} when $\Omega_{1},\ldots,\Omega_{m}$ maximize Problem \ref{prob2}.


%

\section{First and Second Variation}

In this section, we recall some standard facts for variations of sets with respect to the Gaussian measure.  Here is a summary of notation.

\textbf{Summary of Notation}.
\begin{itemize}
\item $T_{\rho}$ denotes the Ornstein-Uhlenbeck operator with correlation parameter $\rho\in(-1,1)$.
\item $\Omega_{1},\ldots,\Omega_{m}$ denotes a partition of $\R^{\adimn}$ into $m$ disjoint measurable sets.
\item $\redb\Omega$ denotes the reduced boundary of $\Omega\subset\R^{\adimn}$.
\item $\Sigma_{ij}\colonequals(\redb\Omega_{i})\cap(\redb\Omega_{j})$ for all $1\leq i,j\leq m$.
\item $N_{ij}(x)$ is the unit normal vector to $x\in\Sigma_{ij}$ that points from $\Omega_{i}$ into $\Omega_{j}$, so that $N_{ij}=-N_{ji}$.
\end{itemize}
Throughout the paper, unless otherwise stated, we define $G\colon\R^{\adimn}\times\R^{\adimn}\to\R$ to be the following function.  For all $x,y\in\R^{\adimn}$, $\forall$ $\rho\in(-1,1)$, define
\begin{equation}\label{gdef}
\begin{aligned}
G(x,y)&=(1-\rho^{2})^{-(\adimn)/2}(2\pi)^{-(\adimn)}e^{\frac{-\|x\|^{2}-\|y\|^{2}+2\rho\langle x,y\rangle}{2(1-\rho^{2})}}\\
&=(1-\rho^{2})^{-(\adimn)/2}\gamma_{\adimn}(x)\gamma_{\adimn}(y)e^{\frac{-\rho^{2}(\|x\|^{2}+\|y\|^{2})+2\rho\langle x,y\rangle}{2(1-\rho^{2})}}\\
&=(1-\rho^{2})^{-(\adimn)/2}(2\pi)^{-(\adimn)/2}\gamma_{\adimn}(x)e^{\frac{-\vnorm{y-\rho x}^{2}}{2(1-\rho^{2})}}.
\end{aligned}
\end{equation}

We can then rewrite the noise stability from Definition \ref{noisedef} as
$$\int_{\R^{\adimn}}1_{\Omega}(x)T_{\rho}1_{\Omega}(x)\gamma_{\adimn}(x)\,\d x
=\int_{\Omega}\int_{\Omega}G(x,y)\,\d x\d y.$$
Our first and second variation formulas for the noise stability will be written in terms of $G$.


\begin{lemma}[\embolden{The First Variation}\,{\cite{chokski07}}; also {\cite[Lemma 3.1, Equation (7)]{heilman14}}]\label{latelemma3}
Let $X\in C_{0}^{\infty}(\R^{\adimn},\R^{\adimn})$.  Let $\Omega\subset\R^{\adimn}$ be a measurable set such that $\partial\Omega$ is a locally finite union of $C^{\infty}$ manifolds.  Let $\{\Omega^{(s)}\}_{s\in(-1,1)}$ be the corresponding variation of $\Omega$.  Then
\begin{equation}\label{Bone6}
\frac{\d}{\d s}\Big|_{s=0}\int_{\R^{\adimn}} 1_{\Omega^{(s)}}(y)G(x,y)\,\d y
=\int_{\partial \Omega}G(x,y)\langle X(y),N(y)\rangle \,\d y.
\end{equation}
\end{lemma}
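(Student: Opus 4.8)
The plan is to differentiate under the integral sign using the change-of-variables induced by the flow $\Psi(\cdot,s)$, then apply the divergence theorem on $\Omega$. First I would fix $x\in\R^{\adimn}$ and write, using the definition $\Omega^{(s)}=\Psi(\Omega,s)$ and the change of variables $y=\Psi(z,s)$,
\begin{equation*}
\int_{\R^{\adimn}}1_{\Omega^{(s)}}(y)G(x,y)\,\d y=\int_{\Omega}G(x,\Psi(z,s))\,J_{s}(z)\,\d z,
\end{equation*}
where $J_{s}(z)\colonequals\det(D_{z}\Psi(z,s))$ is the Jacobian of the map $z\mapsto\Psi(z,s)$. Since $X\in C_{0}^{\infty}(\R^{\adimn},\R^{\adimn})$, the flow $\Psi$ is smooth and compactly supported in the sense that $\Psi(z,s)=z$ for $z$ outside a fixed ball, so all the integrals are over a fixed compact set and differentiation under the integral sign is justified by dominated convergence.

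Next I would compute the $s$-derivative at $s=0$ of the integrand. From \eqref{nine2.3} we have $\frac{\d}{\d s}\big|_{s=0}\Psi(z,s)=X(z)$ and $\Psi(z,0)=z$, so $\frac{\d}{\d s}\big|_{s=0}G(x,\Psi(z,s))=\langle \overline{\nabla}_{y}G(x,z),X(z)\rangle$. The standard formula for the derivative of a Jacobian gives $\frac{\d}{\d s}\big|_{s=0}J_{s}(z)=\mathrm{div}\,X(z)$, and $J_{0}(z)=1$. Combining via the product rule,
\begin{equation*}
\frac{\d}{\d s}\Big|_{s=0}\int_{\R^{\adimn}}1_{\Omega^{(s)}}(y)G(x,y)\,\d y=\int_{\Omega}\Big(\langle \overline{\nabla}_{y}G(x,z),X(z)\rangle+G(x,z)\,\mathrm{div}\,X(z)\Big)\,\d z=\int_{\Omega}\mathrm{div}_{z}\big(G(x,z)X(z)\big)\,\d z,
\end{equation*}
recognizing the integrand as the divergence of the vector field $z\mapsto G(x,z)X(z)$.

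Finally I would apply the divergence theorem on $\Omega$. Since $\partial\Omega$ is a locally finite union of $C^{\infty}$ manifolds and $X$ (hence $G(x,\cdot)X$) has compact support, the divergence theorem applies and yields
\begin{equation*}
\int_{\Omega}\mathrm{div}_{z}\big(G(x,z)X(z)\big)\,\d z=\int_{\partial\Omega}G(x,y)\langle X(y),N(y)\rangle\,\d y,
\end{equation*}
with $N$ the unit exterior normal to $\Omega$, which is \eqref{Bone6}. The main technical point to be careful about is justifying the divergence theorem when $\partial\Omega$ is merely a locally finite union of smooth pieces rather than a single smooth hypersurface; this is handled by the fact (from Lemma \ref{reglem} and Definition \ref{rbdef}) that the reduced boundary $\redb\Omega$ agrees with $\partial\Omega$ up to $\sdimn$-dimensional Hausdorff measure zero, so the Gauss--Green formula for sets of locally finite perimeter applies and the singular set contributes nothing to the boundary integral. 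A secondary point is the rigorous justification of differentiating under the integral sign, but this is routine given the compact support of $X$ and the smoothness of $G$ and $\Psi$.
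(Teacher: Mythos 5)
Your proof is correct and follows the standard route (pull back along the flow, differentiate the Jacobian via Liouville's formula, recognize a divergence, and apply the Gauss--Green theorem for sets of locally finite perimeter); the paper itself does not prove this lemma but cites it from the literature, where the argument is essentially the one you give. The one caveat worth noting is that your appeal to Lemma \ref{reglem} is not needed and not quite apt here (that lemma concerns maximizers), since the stated hypothesis that $\partial\Omega$ is a locally finite union of $C^{\infty}$ manifolds already gives locally finite perimeter and lets the boundary integral be taken over $\redb\Omega$, which coincides with $\partial\Omega$ up to $\sdimn$-dimensional Hausdorff measure zero.
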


The following Lemma is a consequence of \eqref{Bone6} and Lemma \ref{reglem}.

\begin{lemma}[\embolden{The First Variation for Maximizers}]\label{firstvarmaxns}
Suppose $\Omega_{1},\ldots,\Omega_{m}\subset\R^{\adimn}$ maximize Problem \ref{prob2}.  Then for all $1\leq i<j\leq m$, there exists $c_{ij}\in\R$ such that
$$T_{\rho}(1_{\Omega_{i}}-1_{\Omega_{j}})(x)=c_{ij},\qquad\forall\,x\in\Sigma_{ij}.$$
\end{lemma}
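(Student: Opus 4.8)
The plan is to derive this from the first variation formula (Lemma~\ref{latelemma3}) combined with the maximality of $\Omega_1,\ldots,\Omega_m$ and a volume-preserving trick. First I would fix a pair $1\le i<j\le m$ and restrict attention to variations that only move mass between $\Omega_i$ and $\Omega_j$ while fixing all the other sets, so that the only relevant piece of the boundary is $\Sigma_{ij}=(\redb\Omega_i)\cap(\redb\Omega_j)$, which by Lemma~\ref{reglem} is (up to a set of Hausdorff dimension at most $\sdimn-1$) a finite union of $C^\infty$ $\sdimn$-manifolds with well-defined unit normal $N_{ij}$. For a vector field $X\in C_0^\infty(\R^{\adimn},\R^{\adimn})$ supported away from the other $\Sigma_{k\ell}$, the induced flow $\Psi(\cdot,s)$ deforms $\Omega_i^{(s)}$ and $\Omega_j^{(s)}$ (with $\Omega_j^{(s)}$ taking back whatever $\Omega_i^{(s)}$ loses), and I would compute, writing $f(y):=\langle X(y),N_{ij}(y)\rangle$ on $\Sigma_{ij}$,
\begin{equation}\label{planeq1}
\frac{\d}{\d s}\Big|_{s=0}\sum_{\ell=1}^m\int_{\R^{\adimn}}1_{\Omega_\ell^{(s)}}(x)T_\rho 1_{\Omega_\ell^{(s)}}(x)\gamma_{\adimn}(x)\,\d x
=2\int_{\Sigma_{ij}}T_\rho(1_{\Omega_i}-1_{\Omega_j})(y)\,f(y)\,\gamma_{\adimn}(y)\,\d y,
\end{equation}
the factor $2$ and the difference $1_{\Omega_i}-1_{\Omega_j}$ coming from expanding the quadratic form in the noise stability exactly as in the proof of Lemma~\ref{reglem} and applying \eqref{Bone6} to both the $\Omega_i$ and $\Omega_j$ terms (the cross contribution from the $T_\rho$ operator being symmetric).

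Next I would impose the volume constraint. The admissible variations are those for which $\gamma_{\adimn}(\Omega_\ell^{(s)})$ is constant in $s$ for every $\ell$; since only $\Omega_i$ and $\Omega_j$ change, this amounts to the single linear condition $\int_{\Sigma_{ij}}f(y)\gamma_{\adimn}(y)\,\d y=0$ (this is the first variation of Gaussian volume, again via the divergence theorem / \eqref{Bone6} with the constant integrand replaced by $\gamma_{\adimn}$, noting $\gamma_{\adimn}$ plays the role of the weight). Because $\Omega_1,\ldots,\Omega_m$ maximize Problem~\ref{prob2}, the left-hand side of \eqref{planeq1} must vanish for every such volume-preserving $X$. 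By a standard Lagrange-multiplier argument (the functions $f=\langle X,N_{ij}\rangle$ obtained this way range over a dense subset of $L^2(\Sigma_{ij},\gamma_{\adimn})$, so the linear functional $f\mapsto \int_{\Sigma_{ij}}T_\rho(1_{\Omega_i}-1_{\Omega_j})f\,\gamma_{\adimn}$ annihilates the hyperplane $\{\int f\gamma_{\adimn}=0\}$), there is a constant $c_{ij}\in\R$ with $T_\rho(1_{\Omega_i}-1_{\Omega_j})(y)=c_{ij}$ for $\gamma_{\adimn}$-a.e.\ $y\in\Sigma_{ij}$. Finally, since $T_\rho(1_{\Omega_i}-1_{\Omega_j})$ is continuous on all of $\R^{\adimn}$ by \eqref{oudef}, the equality holds for \emph{every} $y\in\Sigma_{ij}$, which is the assertion.

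The main obstacle I anticipate is the justification that the surjectivity/density of $\{\langle X,N_{ij}\rangle : X\in C_0^\infty\}$ in $L^2(\Sigma_{ij},\gamma_{\adimn})$ is legitimate given that $\Sigma_{ij}$ is only a finite union of smooth manifolds (with possible singular overlaps and a genuine boundary $\partial\partial\Omega$), and that one can indeed realize the two-set-only variation without disturbing the regularity statements of Lemma~\ref{reglem}. A clean way around this is to note that $\big((\partial\Omega_i)\cap(\partial\Omega_j)\big)\setminus\Sigma_{ij}$ has Hausdorff dimension at most $\sdimn-1$ (as recorded after Lemma~\ref{reglem}), so it carries no $\sdimn$-dimensional surface measure and can be ignored in every integral above; on the smooth part, choosing $X$ of the form $\phi(y)N_{ij}(y)$ near $\Sigma_{ij}$ (suitably extended and cut off) for arbitrary $\phi\in C_0^\infty$ gives $f=\phi$, which is already dense. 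The remaining bookkeeping — that the flow preserves the partition structure to first order, and that differentiation under the integral sign is valid — is routine and parallels the cited references \cite{chokski07} and \cite[Lemma 3.1]{heilman14}.
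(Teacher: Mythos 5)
Your proposal is correct and follows essentially the same route as the paper: compute the first variation for a vector field supported near $\Sigma_{ij}$ (which localizes to $2\int_{\Sigma_{ij}}T_\rho(1_{\Omega_i}-1_{\Omega_j})\,\langle X,N_{ij}\rangle\,\gamma_{\adimn}$, matching the paper's identity up to the factor $\tfrac12$ placed on the other side), impose the single volume constraint $\int_{\Sigma_{ij}}\langle X,N_{ij}\rangle\gamma_{\adimn}=0$, and conclude by maximality that the integrand's coefficient must be constant. Your extra remarks on density of normal components, the negligibility of the singular set, and upgrading a.e.\ constancy to everywhere via continuity of $T_\rho(1_{\Omega_i}-1_{\Omega_j})$ are consistent with (and slightly more explicit than) the paper's argument.
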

\begin{proof}
Fix $1\leq i<j\leq m$ and denote $f_{ij}(x)\colonequals\langle X(x),N_{ij}(x)\rangle$ for all $x\in\Sigma_{ij}$.  From Lemma \ref{latelemma3}, if $X$ is nonzero outside of $\Sigma_{ij}$, we get
\begin{flalign*}
&\frac{1}{2}\frac{\d}{\d s}\Big|_{s=0}\sum_{i=1}^{m}\int_{\R^{\adimn}}1_{\Omega_{i}^{(s)}}(x)T_{\rho}1_{\Omega_{i}^{(s)}}(x)\gamma_{\adimn}(x)\,\d x\\
&\qquad=\int_{\Omega_{i}}G(x,y)\int_{\Sigma_{ij}}\langle X(x),N_{ij}(x)\rangle \,\d x \,\d y
+\int_{\Omega_{j}}G(x,y)\int_{\Sigma_{ij}}\langle X(x),N_{ji}(x)\rangle \,\d x \,\d y\\
&\qquad\stackrel{\eqref{oudef}\wedge\eqref{gdef}}{=}\int_{\Sigma_{ij}}T_{\rho}(1_{\Omega_{i}}-1_{\Omega_{j}})(x)f_{ij}(x)\,\d x.
\end{flalign*}
We used above $N_{ij}=-N_{ji}$.  If $T_{\rho}(1_{\Omega_{i}}-1_{\Omega_{j}})(x)$ is nonconstant, then we can construct $f_{ij}$ supported in $\Sigma_{ij}$ with $\int_{\redb\Omega_{i'}}f_{ij}(x)\gamma_{\adimn}(x)dx=0$ for all $1\leq i'\leq m$ to give a nonzero derivative, contradicting the maximality of $\Omega_{1},\ldots,\Omega_{m}$ (as in Lemma \ref{reglem} and \eqref{zero9.0}).
\end{proof}

\begin{theorem}[\embolden{General Second Variation Formula}, {\cite[Theorem 2.6]{chokski07}}; also {\cite[Theorem 1.10]{heilman15}}]\label{thm4}
Let $X\in C_{0}^{\infty}(\R^{\adimn},\R^{\adimn})$.  Let $\Omega\subset\R^{\adimn}$  be a measurable set such that $\partial\Omega$ is a locally finite union of $C^{\infty}$ manifolds.  Let $\{\Omega^{(s)}\}_{s\in(-1,1)}$ be the corresponding variation of $\Omega$.  Define $V$ as in \eqref{two9c}.  Then
\begin{flalign*}
&\frac{1}{2}\frac{\d^{2}}{\d s^{2}}\Big|_{s=0}\int_{\R^{\adimn}} \int_{\R^{\adimn}} 1_{\Omega^{(s)}}(y)G(x,y) 1_{\Omega^{(s)}}(x)\,\d x\d y\\
&\quad=\int_{\redA}\int_{\redA}G(x,y)\langle X(x),N(x)\rangle\langle X(y),N(y)\rangle \,\d x\d y
+\int_{\redA}\mathrm{div}(V(x,0)X(x))\langle X(x),N(x)\rangle \,\d x.
\end{flalign*}

\end{theorem}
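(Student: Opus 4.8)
\textbf{Proof plan for Theorem~\ref{thm4}.}

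The plan is to sidestep the geometric second variation of the surface area element and differentiate twice directly, using only the first variation formula (Lemma~\ref{latelemma3}), the symmetry $G(x,y)=G(y,x)$ visible in \eqref{gdef}, and the divergence theorem. Write $F(s)\colonequals\int_{\R^{\adimn}}\int_{\R^{\adimn}}1_{\Omega^{(s)}}(y)G(x,y)1_{\Omega^{(s)}}(x)\,\d x\d y=\int_{\Omega^{(s)}}V(x,s)\,\d x$, with $V$ as in \eqref{two9c}. Because $\Psi(\cdot,s)$ is the flow of the $s$-independent vector field $X$, one has $\Omega^{(s+t)}=\Psi(\Omega^{(s)},t)$, so at every $s$ the transport (Reynolds) formula $\frac{\d}{\d s}\int_{\Omega^{(s)}}h(x,s)\,\d x=\int_{\Omega^{(s)}}\tfrac{\partial}{\partial s}h(x,s)\,\d x+\int_{\redb\Omega^{(s)}}h(x,s)\langle X(x),N^{(s)}(x)\rangle\,\d x$ applies, where $N^{(s)}$ is the unit exterior normal to $\redb\Omega^{(s)}$; this is a routine extension of Lemma~\ref{latelemma3} that also allows $s$-dependence in the integrand and a general base value of $s$.

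First I would compute $\tfrac{1}{2}F'(s)$. Applying the transport formula with $h(x,s)=V(x,s)$, the moving-domain term is $\int_{\redb\Omega^{(s)}}V(x,s)\langle X(x),N^{(s)}(x)\rangle\,\d x$. For the interior term, Lemma~\ref{latelemma3} applied with base set $\Omega^{(s)}$ gives $\tfrac{\partial}{\partial s}V(x,s)=\int_{\redb\Omega^{(s)}}G(x,y)\langle X(y),N^{(s)}(y)\rangle\,\d y$; integrating this over $x\in\Omega^{(s)}$, using $G(x,y)=G(y,x)$ and Fubini, recovers exactly the same surface integral (since $\int_{\Omega^{(s)}}G(x,y)\,\d x=V(y,s)$). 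Hence $F'(s)=2\int_{\redb\Omega^{(s)}}V(x,s)\langle X(x),N^{(s)}(x)\rangle\,\d x$, and the divergence theorem on $\Omega^{(s)}$ rewrites this as $\tfrac{1}{2}F'(s)=\int_{\Omega^{(s)}}\mathrm{div}_{x}\big(V(x,s)X(x)\big)\,\d x$, the field $x\mapsto V(x,s)X(x)$ being smooth and, up to the support of $X$, compactly supported.

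Next I would differentiate $\tfrac{1}{2}F'(s)$ at $s=0$ by the same transport formula, now with $h(x,s)=\mathrm{div}_{x}(V(x,s)X(x))$. The moving-domain term gives $\int_{\redb\Omega}\mathrm{div}\big(V(x,0)X(x)\big)\langle X(x),N(x)\rangle\,\d x$, which is exactly the second term of the claimed identity. For the interior term, $\tfrac{\partial}{\partial s}$ commutes with the spatial operator $\mathrm{div}_{x}$ and $X$ is $s$-independent, so it equals $\int_{\Omega}\mathrm{div}_{x}\big(\tfrac{\partial}{\partial s}V(x,0)\,X(x)\big)\,\d x$; the divergence theorem converts this to $\int_{\redb\Omega}\tfrac{\partial}{\partial s}V(x,0)\langle X(x),N(x)\rangle\,\d x$, and a last use of Lemma~\ref{latelemma3} at $s=0$ yields $\tfrac{\partial}{\partial s}V(x,0)=\int_{\redb\Omega}G(x,y)\langle X(y),N(y)\rangle\,\d y$. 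Substituting produces the double surface integral $\int_{\redb\Omega}\int_{\redb\Omega}G(x,y)\langle X(x),N(x)\rangle\langle X(y),N(y)\rangle\,\d x\d y$, the first term of the identity, and the proof is complete.

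The routine-but-delicate point --- the one I expect to be the main obstacle --- is rigorously justifying the two applications of the transport formula and the various exchanges of differentiation and integration under the hypothesis that $\partial\Omega$ is merely a locally finite union of $C^{\infty}$ manifolds, and so may carry a singular set where pieces meet, rather than a single embedded hypersurface. Here one uses that $X\in C_{0}^{\infty}(\R^{\adimn},\R^{\adimn})$, so $\Psi(\cdot,s)$ is a $C^{\infty}$ diffeomorphism and $\Omega^{(s)}$ again has boundary a locally finite union of $C^{\infty}$ manifolds; that $G$ and all its derivatives decay like a Gaussian, so $V(\cdot,s)$ is $C^{\infty}$ in $x$ and all integrals (in particular the Fubini step) converge absolutely; and that the singular stratum has Hausdorff dimension at most $\sdimn-1$ (cf.\ Lemma~\ref{reglem}), so it is null for $\sdimn$-dimensional surface measure and, since $X$ vanishes near the $(\sdimn-1)$-dimensional topological boundary $\partial\partial\Omega$, the divergence theorem and the transport formula hold piece by piece with no contribution from the strata. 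Dominated convergence, using the Gaussian decay of $G$ together with the compact support of $X$, then legitimizes differentiating under the integral throughout. (An alternative route, presumably that of \cite{chokski07}, pulls the surface integral in $F'(s)$ back to $\redb\Omega$ along the flow and differentiates the tangential Jacobian directly, which requires the second variation of the area element that the argument above avoids.)
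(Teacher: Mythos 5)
Your derivation is correct. Note first that the paper itself offers no proof of Theorem \ref{thm4}: it is quoted from Choksi--Sternberg \cite{chokski07} and \cite{heilman15}, so there is no internal argument to match against. The cited proofs proceed by the route you flag as the alternative at the end --- pulling both integrals back to the fixed set along the flow $\Psi(\cdot,s)$, differentiating the volume/area Jacobians twice, and reassembling via the divergence theorem --- whereas you apply the Reynolds transport theorem twice, using the symmetry $G(x,y)=G(y,x)$ and Fubini to collapse $F'(s)$ to $2\int_{\redb\Omega^{(s)}}V(x,s)\langle X,N^{(s)}\rangle$, and then transporting $\int_{\Omega^{(s)}}\mathrm{div}_{x}(V(x,s)X(x))\,\d x$ once more. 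The bookkeeping checks out: the moving-domain term of the second application gives exactly $\int_{\redA}\mathrm{div}(V(x,0)X(x))\langle X,N\rangle\,\d x$, and the interior term, after commuting $\partial_{s}$ with $\mathrm{div}_{x}$, applying Gauss--Green, and invoking the first variation of $V$, gives the double surface integral; the factors of $2$ and $\tfrac12$ are consistent. What your approach buys is that you never need the second variation of the tangential area element (no mean-curvature or normal-derivative terms appear, consistent with the nonlocal functional having no perimeter term), at the cost of having to justify the transport formula at a general base value of $s$ and on a boundary with singular strata. One small inaccuracy in your technical discussion: for $X\in C_{0}^{\infty}(\R^{\adimn},\R^{\adimn})$ the definition \eqref{c0def} imposes no vanishing near the lower-dimensional strata of $\partial\Omega$ (the condition $f(\partial\partial\Omega)=0$ pertains to manifolds with boundary, not to the ambient space), so you cannot argue that $X$ kills the strata; instead one should invoke the Gauss--Green theorem for sets of locally finite perimeter, which holds with the reduced boundary and a compactly supported $C^{1}$ field regardless of the singular set, since that set is null for $\sdimn$-dimensional Hausdorff measure. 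With that substitution the argument is complete.
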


\section{Noise Stability and the Calculus of Variations}\label{secnoise}

We now further refine the first and second variation formulas from the previous section.  The following formula follows by using $G(x,y)\colonequals\gamma_{\adimn}(x)\gamma_{\adimn}(y)$ $\forall$ $x,y\in\R^{\adimn}$ in Lemma \ref{latelemma3} and in Theorem \ref{thm4}.

\begin{lemma}[\embolden{Variations of Gaussian Volume}, {\cite{ledoux01}}]\label{lemma41}
Let $\Omega\subset\R^{\adimn}$  be a measurable set such that $\partial\Omega$ is a locally finite union of $C^{\infty}$ manifolds.  Let $X\in C_{0}^{\infty}(\R^{\adimn},\R^{\adimn})$.  Let $\{\Omega^{(s)}\}_{s\in(-1,1)}$ be the corresponding variation of $\Omega$.  Denote $f(x)\colonequals\langle X(x),N(x)\rangle$ for all $x\in\Sigma\colonequals \redb\Omega $.  Then
$$\frac{\d}{\d s}\Big|_{s=0}\gamma_{\adimn}(\Omega^{(s)})=\int_{\Sigma}f(x)\gamma_{\adimn}(x)\,\d x.$$
$$\frac{\d^{2}}{\d s^{2}}\Big|_{s=0}\gamma_{\adimn}(\Omega^{(s)})=\int_{\Sigma}(\mathrm{div}(X)-\langle X,x\rangle)f(x)\gamma_{\adimn}(x)\,\d x.$$
\end{lemma}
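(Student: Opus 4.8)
The plan is to read off both identities from the general variation formulas already recorded, Lemma \ref{latelemma3} and Theorem \ref{thm4}, by running them with the \emph{product} kernel $G(x,y)\colonequals\gamma_{\adimn}(x)\gamma_{\adimn}(y)$ in place of the kernel \eqref{gdef} (the proofs of those results use nothing about $G$ beyond its smoothness). With this choice the integrals appearing there degenerate: $\int_{\Omega^{(s)}}G(x,y)\,\d y=\gamma_{\adimn}(x)\,\gamma_{\adimn}(\Omega^{(s)})$ and $\int_{\R^{\adimn}}\int_{\R^{\adimn}}1_{\Omega^{(s)}}(y)G(x,y)1_{\Omega^{(s)}}(x)\,\d x\,\d y=\gamma_{\adimn}(\Omega^{(s)})^{2}$, so differentiating these in $s$ at $s=0$ produces exactly the variations of Gaussian volume we want.

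For the first identity, apply Lemma \ref{latelemma3} with this $G$: the left side is $\tfrac{\d}{\d s}\big|_{s=0}\gamma_{\adimn}(x)\gamma_{\adimn}(\Omega^{(s)})$ and the right side is $\gamma_{\adimn}(x)\int_{\partial\Omega}\gamma_{\adimn}(y)\langle X(y),N(y)\rangle\,\d y$. Cancelling the everywhere-positive factor $\gamma_{\adimn}(x)$ and using that $\partial\Omega$ and $\Sigma=\redb\Omega$ coincide up to a set of $\sdimn$-dimensional Hausdorff measure zero gives $\tfrac{\d}{\d s}\big|_{s=0}\gamma_{\adimn}(\Omega^{(s)})=\int_{\Sigma}f(x)\gamma_{\adimn}(x)\,\d x$.

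For the second identity, apply Theorem \ref{thm4} with the same $G$, so that $V(x,0)=\int_{\Omega}G(x,y)\,\d y=\gamma_{\adimn}(x)\gamma_{\adimn}(\Omega)$. The left side of the theorem is $\tfrac12\tfrac{\d^{2}}{\d s^{2}}\big|_{s=0}\gamma_{\adimn}(\Omega^{(s)})^{2}$, which by $(g^{2})''=2(g')^{2}+2gg''$ at $s=0$ equals $\big(\tfrac{\d}{\d s}\big|_{s=0}\gamma_{\adimn}(\Omega^{(s)})\big)^{2}+\gamma_{\adimn}(\Omega)\tfrac{\d^{2}}{\d s^{2}}\big|_{s=0}\gamma_{\adimn}(\Omega^{(s)})$. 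On the right side of the theorem the boundary--boundary term is $\int_{\Sigma}\int_{\Sigma}\gamma_{\adimn}(x)\gamma_{\adimn}(y)f(x)f(y)\,\d x\,\d y=\big(\int_{\Sigma}f(x)\gamma_{\adimn}(x)\,\d x\big)^{2}$, which by the first identity is precisely $\big(\tfrac{\d}{\d s}\big|_{s=0}\gamma_{\adimn}(\Omega^{(s)})\big)^{2}$, so it cancels. What remains is $\gamma_{\adimn}(\Omega)\tfrac{\d^{2}}{\d s^{2}}\big|_{s=0}\gamma_{\adimn}(\Omega^{(s)})=\int_{\Sigma}\mathrm{div}\big(V(x,0)X(x)\big)\langle X(x),N(x)\rangle\,\d x$, and since $\overline{\nabla}\gamma_{\adimn}(x)=-x\,\gamma_{\adimn}(x)$ one gets $\mathrm{div}\big(V(x,0)X(x)\big)=\gamma_{\adimn}(\Omega)\,\mathrm{div}\big(\gamma_{\adimn}(x)X(x)\big)=\gamma_{\adimn}(\Omega)\,\gamma_{\adimn}(x)\big(\mathrm{div}(X)(x)-\langle X(x),x\rangle\big)$. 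Dividing through by $\gamma_{\adimn}(\Omega)$ finishes it; the degenerate case $\gamma_{\adimn}(\Omega)=0$ forces $\Omega$ to be Lebesgue-null, so both sides then vanish and may be set aside.

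I do not expect a genuine obstacle — this is a bookkeeping reduction — but the one step to watch is the cancellation of the boundary--boundary term in Theorem \ref{thm4} against the square of the first variation, since it is exactly this cancellation that collapses a double boundary integral into the single integral in the statement; keeping careful track of the scalar factor $\gamma_{\adimn}(\Omega)$ (and of the harmless passage between $\partial\Omega$ and $\redb\Omega$) is all that is needed. An alternative, self-contained route is to differentiate $\gamma_{\adimn}(\Omega^{(s)})=\int_{\Omega}\gamma_{\adimn}(\Psi(y,s))\det\big(D_{y}\Psi(y,s)\big)\,\d y$ directly under the integral, using $\tfrac{\d}{\d s}\big|_{s=0}\Psi=X$, $\tfrac{\d}{\d s}\big|_{s=0}\det(D_{y}\Psi)=\mathrm{div}(X)$, and the divergence theorem; but the second derivative there requires second-order expansions of $\Psi$ and of its Jacobian, so routing through Theorem \ref{thm4} is the cleaner path.
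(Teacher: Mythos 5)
Your proposal is correct and is exactly the paper's route: the paper's entire justification is the sentence preceding the lemma, namely that it "follows by using $G(x,y)\colonequals\gamma_{\adimn}(x)\gamma_{\adimn}(y)$ in Lemma \ref{latelemma3} and in Theorem \ref{thm4}," which is precisely the substitution you carry out. Your write-up just fills in the bookkeeping (the cancellation of the boundary--boundary term against the squared first variation, and $\mathrm{div}(\gamma_{\adimn}X)=\gamma_{\adimn}(\mathrm{div}X-\langle X,x\rangle)$) that the paper leaves implicit.
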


\begin{lemma}[\embolden{Extension Lemma for Existence of Volume-Preserving Variations}, {\cite[Lemma 3.9]{heilman18}}]\label{lemma27}
Let $X'\in C_{0}^{\infty}(\R^{\adimn},\R^{\adimn})$ be a vector field.  Define $f_{ij}\colonequals\langle X',N_{ij}\rangle\in C_{0}^{\infty}(\Sigma_{ij})$ for all $1\leq i<j\leq m$.  If
\begin{equation}\label{eight2}
\forall\,1\leq i\leq m,\quad \sum_{j\in\{1,\ldots,m\}\setminus\{i\}}\int_{\Sigma_{ij}}f_{ij}(x)\gamma_{\sdimn}(x)\,\d x=0,
\end{equation}
then $X'|_{\cup_{1\leq i<j\leq m}\Sigma_{ij}}$ can be extended to a vector field $X\in C_{0}^{\infty}(\R^{\adimn},\R^{\adimn})$ such that the corresponding variations $\{\Omega_{i}^{(s)}\}_{1\leq i\leq m,s\in(-1,1)}$ satisfy
$$\forall\,1\leq i\leq m,\quad\forall\,s\in(-1,1),\quad \gamma_{\adimn}(\Omega_{i}^{(s)})=\gamma_{\adimn}(\Omega_{i}).$$
\end{lemma}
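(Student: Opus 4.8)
The plan is to reduce the lemma to the construction of a compactly supported extension of $X'$ that is divergence-free for the Gaussian measure, and then to produce that extension by solving a first-order equation near the interfaces and correcting the remaining error using the hypothesis \eqref{eight2}.

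Write $S\colonequals\bigcup_{1\le i<j\le m}\Sigma_{ij}$; by Lemma \ref{reglem}, $S$ is, away from a set of Hausdorff dimension at most $\sdimn-1$, a locally finite union of $C^{\infty}$ $\sdimn$-dimensional manifolds, and $\bigcup_{i=1}^{m}\Omega_{i}=\R^{\adimn}$. I claim it suffices to produce $X\in C_{0}^{\infty}(\R^{\adimn},\R^{\adimn})$ with $X|_{S}=X'|_{S}$ and $\mathrm{div}(\gamma_{\adimn}(x)X(x))=0$ for all $x\in\R^{\adimn}$ (equivalently $\mathrm{div}\,X=\langle X,x\rangle$). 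Granting this, $\gamma_{\adimn}X$ is divergence-free, so the flow $\Psi$ of $X$ from \eqref{nine2.3} (defined for all $s$ since $X$ has compact support) preserves the measure $\mu\colonequals\gamma_{\adimn}(x)\,\d x$, because the $\mu$-divergence of $X$ equals $\mathrm{div}(\gamma_{\adimn}X)/\gamma_{\adimn}\equiv0$; hence $\gamma_{\adimn}(\Omega_{i}^{(s)})=\mu(\Psi(\Omega_{i},s))=\mu(\Omega_{i})=\gamma_{\adimn}(\Omega_{i})$ for every $1\le i\le m$ and $s\in(-1,1)$, which is the conclusion.

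To construct $X$, first extend $X'|_{S}$ to a $C^{\infty}$ vector field $\widehat X$ on an open neighborhood $U$ of $S$ with $\mathrm{div}(\gamma_{\adimn}\widehat X)=0$ on $U$. Along the smooth part of $S$ this is an (under-determined) first-order equation for $W\colonequals\gamma_{\adimn}\widehat X$, solved explicitly in a collar of $S$ by holding the tangential components of $W$ fixed in the normal direction and integrating the resulting scalar equation for the normal component; near the singular strata of $S$ one carries this out on the finitely many local sheets from Lemma \ref{reglem} and patches, using that the data is the restriction of the single globally defined field $X'$. Extend $\widehat X$ to an arbitrary element of $C_{0}^{\infty}(\R^{\adimn},\R^{\adimn})$ and set $\rho_{0}\colonequals\mathrm{div}(\gamma_{\adimn}\widehat X)\in C_{0}^{\infty}(\R^{\adimn})$. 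Then $\rho_{0}$ vanishes on $U\supset S$, and by the divergence theorem, $\widehat X|_{S}=X'|_{S}$, and \eqref{eight2},
\[
\int_{\Omega_{i}}\rho_{0}(x)\,\d x=\sum_{j\in\{1,\ldots,m\}\setminus\{i\}}\int_{\Sigma_{ij}}\langle\widehat X(x),N_{ij}(x)\rangle\gamma_{\adimn}(x)\,\d x=\sum_{j\in\{1,\ldots,m\}\setminus\{i\}}\int_{\Sigma_{ij}}f_{ij}(x)\gamma_{\adimn}(x)\,\d x=0
\]
for each $1\le i\le m$. Since $\rho_{0}$ is compactly supported, supported away from $S$, and has zero integral over each $\Omega_{i}$, Bogovskii's solution operator for the divergence equation (after the substitution $w=\gamma_{\adimn}Z$), applied on a bounded smooth open subset of $\mathrm{int}\,\Omega_{i}$ containing $\mathrm{supp}\,\rho_{0}\cap\Omega_{i}$ for each $i$, produces $Z\in C_{0}^{\infty}(\R^{\adimn},\R^{\adimn})$ supported in $\R^{\adimn}\setminus S$ with $\mathrm{div}(\gamma_{\adimn}Z)=-\rho_{0}$. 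Then $X\colonequals\widehat X+Z$ is compactly supported and smooth, agrees with $X'$ on $S$, and satisfies $\mathrm{div}(\gamma_{\adimn}X)=0$, completing the proof.

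The main obstacle is the extension step across the singular strata of $S$: one must perform the weighted-divergence-free extension on the several local smooth sheets and glue them into a genuinely $C^{\infty}$ field, which is precisely where the structural regularity of Lemma \ref{reglem} is indispensable; everything else is soft, with \eqref{eight2} entering exactly as the flux-compatibility condition that makes the divergence equation for $Z$ solvable. (When some $\Omega_{i}$ is disconnected one additionally needs the net flux of $X'$ through the boundary of each of its components to vanish, which \eqref{eight2} alone does not give; in that generality a small further argument is required — for instance an implicit function theorem in a finite-dimensional family of auxiliary vector fields localized near interfaces whose adjacency graph on $\{1,\dots,m\}$ is connected, a choice made possible by connectedness of $\R^{\adimn}$ — but for the configurations arising in Problem \ref{prob2} the construction above applies directly.)
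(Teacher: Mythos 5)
The paper does not actually prove this lemma; it cites it from \cite[Lemma 3.9]{heilman18}, so I am judging your argument on its own terms. Your reduction is correct and even proves something stronger than needed: if a single $X\in C_{0}^{\infty}(\R^{\adimn},\R^{\adimn})$ agrees with $X'$ on $S=\cup_{i<j}\Sigma_{ij}$ and satisfies $\mathrm{div}(\gamma_{\adimn}X)\equiv 0$, then its flow satisfies $\gamma_{\adimn}(\Psi(x,s))\det D_{x}\Psi(x,s)=\gamma_{\adimn}(x)$ for all $s$, hence preserves the Gaussian measure of every set. The identification of \eqref{eight2} as the flux-compatibility condition for the interior divergence equation is also the right idea. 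However, the construction itself has two genuine, unresolved gaps.

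First, the weighted-divergence-free extension $\widehat X$ in a full neighborhood $U$ of $S$. On the smooth part of $S$ the normal-direction ODE for the normal component of $W=\gamma_{\adimn}\widehat X$ works, but near the singular strata Lemma \ref{reglem} only gives that $S$ is locally a finite union of smooth sheets, which may intersect: integrating the scalar equation off one sheet produces values on the other sheets that have no reason to coincide with the prescribed data $\gamma_{\adimn}X'$ there, and gluing local solutions with a partition of unity introduces terms $\langle\nabla\chi,W\rangle$ that destroy $\mathrm{div}\,W=0$. Since your Bogovskii correction $Z$ must be supported away from $S$ (so that $X|_{S}=\widehat X|_{S}$), you genuinely need $\rho_{0}$ to vanish on a full neighborhood of $S$, including its singular part; this is the heart of the construction and it is asserted rather than proved. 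Second, the interior correction: Bogovskii's operator on a bounded domain $D$ requires $\int_{D}\rho_{0}=0$, and \eqref{eight2} only gives $\int_{\Omega_{i}}\rho_{0}=0$, not the vanishing of the integral over each connected component of $\mathrm{int}\,\Omega_{i}$; nothing in the paper rules out disconnected maximizers, so the deferred "small further argument" is actually needed for the lemma as stated. A related smaller point: $\rho_{0}$ vanishes near $S$, but $\partial\Omega_{i}$ can strictly contain $\cup_{j\neq i}\Sigma_{ij}$ (they agree only up to $\mathcal{H}^{\sdimn}$-measure zero), so $\mathrm{supp}\,\rho_{0}\cap\Omega_{i}$ need not be compactly contained in $\mathrm{int}\,\Omega_{i}$ and the bounded smooth $D$ you invoke may not exist without a further (measure-zero modification) argument. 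Standard proofs of such extension lemmas avoid all of this by taking an arbitrary smooth extension and correcting it with a finite-dimensional family of auxiliary vector fields via the implicit function theorem, at the cost of obtaining volume preservation only after reparametrizing the variation; if you want to keep your exact divergence-free route, the singular-strata gluing is the step you must actually supply.
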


\begin{lemma}\label{gpsd}
Define $G$ as in \eqref{gdef}.  Let $f\colon\Sigma\to\R$ be continous and compactly supported.  Then
$$
\int_{\redA}\int_{\redA}G(x,y)f(x)f(y) \,\d x\d y\geq0.
$$
\end{lemma}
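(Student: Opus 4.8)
The plan is to recognize the bilinear form $\int_{\redA}\int_{\redA}G(x,y)f(x)f(y)\,\d x\d y$ as a genuine Gaussian integral in disguise, exactly as in the identity \eqref{zero8.0} used in the proof of Lemma \ref{existlem}, and thereby exhibit it as an honest $L^2$ norm. First I would recall from \eqref{gdef} that
\begin{equation*}
G(x,y)=(1-\rho^{2})^{-(\adimn)/2}(2\pi)^{-(\adimn)/2}\gamma_{\adimn}(x)e^{-\frac{\vnorm{y-\rho x}^{2}}{2(1-\rho^{2})}},
\end{equation*}
which is (up to the factor $\gamma_{\adimn}(x)$) the transition density of the Ornstein--Uhlenbeck process: it is the density at $y$ of $\rho x+\sqrt{1-\rho^2}\,Z$ with $Z\sim\gamma_{\adimn}$. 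Hence $G(x,y)=\gamma_{\adimn}(x)p_{\rho}(x,y)$ where $p_\rho$ is the kernel of $T_\rho$. The key structural fact I will use is the ``square root'' semigroup identity $T_{\rho}=T_{\sqrt\rho}T_{\sqrt\rho}$ (stated in the excerpt right after \eqref{oup}, and used in \eqref{zero8.0}), which at the level of kernels says that $G$ factors through the intermediate kernel at parameter $\sqrt\rho$.

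The key steps, in order: (1) Extend $f$ (which is only defined on $\redA$) to a function on all of $\R^{\adimn}$ — or rather, view the surface measure $f\,\d x$ restricted to $\redA$ as a compactly supported finite (signed) measure $\mu$ on $\R^{\adimn}$, absolutely continuous with respect to $(\sdimn)$-dimensional Hausdorff measure on $\redA$. (2) Write the double integral as $\int\int G(x,y)\,\d\mu(x)\,\d\mu(y)$ and substitute the factorization $G(x,y)=\int_{\R^{\adimn}} G_{\sqrt\rho}(x,z)\,\gamma_{\adimn}(z)^{-1}\,G_{\sqrt\rho}(z,y)\,\d z$ coming from $T_{\sqrt\rho}T_{\sqrt\rho}=T_{\rho}$, where $G_{\sqrt\rho}$ denotes the analogue of $G$ with $\rho$ replaced by $\sqrt\rho$. (3) Interchange the order of integration (justified since everything is nonnegative apart from the sign of $\mu$, and $\mu$ has compact support so the integrals are absolutely convergent — one can bound $G_{\sqrt\rho}$ by a Gaussian and use that $\redA$ has locally finite surface area from Lemma \ref{reglem}), obtaining
\begin{equation*}
\int_{\redA}\int_{\redA}G(x,y)f(x)f(y)\,\d x\d y
=\int_{\R^{\adimn}}\Big(\int_{\redA}G_{\sqrt\rho}(x,z)f(x)\,\d x\Big)^{2}\gamma_{\adimn}(z)^{-1}\,\d z\;\geq\;0.
\end{equation*}
That is, the form is the squared weighted $L^2$-norm of the single-layer potential $z\mapsto\int_{\redA}G_{\sqrt\rho}(x,z)f(x)\,\d x$, hence nonnegative. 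Alternatively, and perhaps more cleanly, one can phrase (2)–(3) probabilistically: with $(X,Y)$ the jointly Gaussian pair of Definition \ref{noisedef} with correlation $\rho$, one has $Y=\sqrt\rho\,W+\sqrt{1-\sqrt\rho^{\,2}}\,W'$ conditionally... — but the cleanest route is the kernel factorization above, mirroring \eqref{zero8.0} verbatim.

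The main obstacle I anticipate is purely technical rather than conceptual: justifying the Fubini interchange in step (3) when $f$ is supported on the lower-dimensional set $\redA$ (so $\mu$ is singular with respect to Lebesgue measure on $\R^{\adimn}$). This is harmless because $G_{\sqrt\rho}(x,z)$ and $G_{\sqrt\rho}(z,y)$ are jointly continuous and bounded on compact sets, $f$ is continuous and compactly supported by hypothesis, and $\redA$ has finite surface measure on compact sets; so $\int_{\R^{\adimn}}\int_{\redA}\int_{\redA}|G_{\sqrt\rho}(x,z)||f(x)||G_{\sqrt\rho}(z,y)||f(y)|\,\d x\,\d y\,\d z<\infty$ after noting the $z$-Gaussian tail is integrable against the $\gamma_{\adimn}(z)^{-1}$ weight (the weight is cancelled by the Gaussian decay of the product of the two kernels — indeed that product, divided by $\gamma_{\adimn}(z)$, recombines to exactly $G(x,y)$ times a Gaussian in $z$, which is integrable). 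Once absolute convergence is in hand, Fubini applies and the proof closes. A one-line alternative that avoids even this: approximate $f\,\d(\mathrm{Hausdorff measure on }\redA)$ weakly by $f_n\gamma_{\adimn}\,\d x$ with $f_n\in C_c(\R^{\adimn})$, apply \eqref{zero8.0}-type positivity to each $f_n$ (noting $\int\int G(x,y)f_n(x)f_n(y)\gamma_{\adimn}(x)^{-1}... $ — actually apply it to $h=f_n$ after absorbing the $\gamma_{\adimn}$ factors appropriately), and pass to the limit using the continuity of $G$. Either way the lemma follows.
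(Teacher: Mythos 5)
Your proof is correct, but it takes a genuinely different route from the paper. The paper proves the lemma by invoking the Mehler (Hermite) expansion of $G(x,y)/(\gamma_{\adimn}(x)\gamma_{\adimn}(y))$ together with Mercer's theorem to conclude that $G$ is a positive semidefinite kernel in the pointwise sense ($\sum_{i,j}\beta_i\beta_j G(z^{(i)},z^{(j)})\geq 0$ for any finite collection of points), and then asserts that this property ``carries over by restriction'' to the surface integral over $\redA$. You instead use the square-root factorization coming from $T_{\sqrt\rho}T_{\sqrt\rho}=T_\rho$, i.e.\ $G(x,y)=\int G_{\sqrt\rho}(x,z)\gamma_{\adimn}(z)^{-1}G_{\sqrt\rho}(z,y)\,\d z$, which (after Tonelli, applicable since the integrand is a product of nonnegative kernels with $\absf{f}$ and recombines to $G(x,y)\absf{f(x)}\absf{f(y)}$) exhibits the bilinear form as $\int\gamma_{\adimn}(z)^{-1}\bigl(\int_{\redA}G_{\sqrt\rho}(x,z)f(x)\,\d x\bigr)^{2}\d z\geq0$. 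This is exactly the continuous analogue of the identity \eqref{zero8.0} already used in Lemma \ref{existlem}, so it is arguably more consistent with the rest of the paper; it also buys you a cleaner treatment of the measure-theoretic subtlety, since the paper's passage from finite-point positive semidefiniteness to an integral against the singular surface measure on $\redA$ is left implicit (it requires an approximation of the surface integral by Riemann sums or by mollified measures), whereas your Fubini/Tonelli argument handles it directly. What the paper's spectral route buys in exchange is brevity and the explicit identification of $G$ with a classical positive definite kernel. Both arguments are sound; your sketched ``one-line alternative'' via weak approximation by absolutely continuous measures is essentially the missing detail in the paper's own restriction step, so either of your two routes would close the lemma.
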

\begin{proof}
If $g\colon\R^{\adimn}\to\R$ is continuous and compactly supported, then it is well known that
$$
\int_{\redA}\int_{\redA}G(x,y)g(x)g(y) \,\d x\d y\geq0,
$$
since e.g. $\frac{G(x,y)}{\gamma_{\adimn(x)}\gamma_{\adimn}(y)}$ is the Mehler kernel, which can be written as an (infinite-dimensional) positive semidefinite matrix.  That is, there exists an orthonormal basis $\{\psi_{i}\}_{i=1}^{\infty}$ of $L_{2}(\gamma_{\adimn})$ (of Hermite polynomials) and there exists a sequence of nonnegative real numbers $\{\lambda_{i}\}_{i=1}^{\infty}$ such that the following series converges absolutely pointwise:
$$\frac{G(x,y)}{\gamma_{\adimn}(x)\gamma_{\adimn}(y)}=\sum_{i=1}^{\infty}\lambda_{i}\psi_{i}(x)\psi_{i}(y),\qquad\forall\,x,y\in\R^{\adimn}.$$
From Mercer's Theorem, this is equivalent to : $\forall$ $p\geq1$, for all $z^{(1)},\ldots,z^{(p)}\in\R^{n}$, for all $\beta_{1},\ldots,\beta_{p}\in\R$,
$$\sum_{i,j=1}^{p}\beta_{i}\beta_{j}G(z^{(i)},z^{(j)})\geq0.$$
In particular, this holds for all $z^{(1)},\ldots,z^{(p)}\in\partial\Omega\subset\R^{\adimn}$.  So, the positive semidefinite property carries over (by restriction) to $\partial\Omega$.
\end{proof}

\subsection{Two Sets}\label{twossub}

For didactic purposes, we first present the second variation of noise stability when $m=2$ in Conjecture \ref{prob2}.

\begin{lemma}[\embolden{Second Variation of Noise Stability}]\label{lemma6}
Let $\Omega\subset\R^{\adimn}$  be a measurable set such that $\partial\Omega$ is a locally finite union of $C^{\infty}$ manifolds.  Let $X\in C_{0}^{\infty}(\R^{\adimn},\R^{\adimn})$.  Let $\{\Omega^{(s)}\}_{s\in(-1,1)}$ be the corresponding variation of $\Omega$.  Denote $f(x)\colonequals\langle X(x),N(x)\rangle$ for all $x\in\Sigma\colonequals \redb\Omega $.  Then
\begin{equation}\label{four30}
\begin{aligned}
&\frac{1}{2}\frac{\d^{2}}{\d s^{2}}\Big|_{s=0}\int_{\R^{\adimn}}\int_{\R^{\adimn}} 1_{\Omega^{(s)}}(y)G(x,y) 1_{\Omega^{(s)}}(x)\,\d x\d y
=\int_{\redA}\int_{\redA}G(x,y)f(x)f(y)\,\d x\d y\\
&\qquad\qquad\qquad+\int_{\redA}\langle\overline{\nabla} T_{\rho}1_{\Omega}(x),X(x)\rangle f(x) \gamma_{\adimn}(x)\,\d x\\
&\qquad\qquad\qquad+\int_{\redA} T_{\rho}1_{\Omega}(x)\Big(\mathrm{div}(X(x))-\langle X(x),x\rangle\Big)f(x)\gamma_{\adimn}(x)\,\d x.
\end{aligned}
\end{equation}
\end{lemma}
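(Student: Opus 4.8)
The plan is to deduce \eqref{four30} from the General Second Variation Formula (Theorem \ref{thm4}) by identifying the function $V(\cdot,0)$ from \eqref{two9c} and then expanding the divergence term that appears there. Recall $V(x,0)=\int_{\Omega}G(x,y)\,\d y$. The first step is to observe, using the last expression for $G$ in \eqref{gdef} together with the definition \eqref{oudef} of $T_{\rho}$, that
$$V(x,0)=\gamma_{\adimn}(x)(1-\rho^{2})^{-(\adimn)/2}(2\pi)^{-(\adimn)/2}\int_{\Omega}e^{-\frac{\vnorm{y-\rho x}^{2}}{2(1-\rho^{2})}}\,\d y=\gamma_{\adimn}(x)\,T_{\rho}1_{\Omega}(x),\qquad\forall\,x\in\R^{\adimn}.$$
Differentiating under the integral sign (justified since $G(x,\cdot)$ and its $x$-derivatives are integrable) shows that $V(\cdot,0)$ is smooth, so Theorem \ref{thm4} applies and gives
\begin{flalign*}
&\frac{1}{2}\frac{\d^{2}}{\d s^{2}}\Big|_{s=0}\int_{\R^{\adimn}}\int_{\R^{\adimn}} 1_{\Omega^{(s)}}(y)G(x,y) 1_{\Omega^{(s)}}(x)\,\d x\d y\\
&\qquad=\int_{\redA}\int_{\redA}G(x,y)f(x)f(y)\,\d x\d y+\int_{\redA}\mathrm{div}\big(\gamma_{\adimn}(x)T_{\rho}1_{\Omega}(x)X(x)\big)f(x)\,\d x.
\end{flalign*}

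Next I would expand the divergence in the last integral using the product rule $\mathrm{div}(\phi X)=\langle\overline{\nabla}\phi,X\rangle+\phi\,\mathrm{div}X$ together with the identity $\overline{\nabla}\gamma_{\adimn}(x)=-x\,\gamma_{\adimn}(x)$. This yields, for $x\in\redA$,
\begin{flalign*}
\mathrm{div}\big(\gamma_{\adimn}(x)T_{\rho}1_{\Omega}(x)X(x)\big)
&=\langle\gamma_{\adimn}(x)\overline{\nabla}T_{\rho}1_{\Omega}(x)-x\gamma_{\adimn}(x)T_{\rho}1_{\Omega}(x),X(x)\rangle+\gamma_{\adimn}(x)T_{\rho}1_{\Omega}(x)\,\mathrm{div}(X(x))\\
&=\gamma_{\adimn}(x)\Big(\langle\overline{\nabla}T_{\rho}1_{\Omega}(x),X(x)\rangle+T_{\rho}1_{\Omega}(x)\big(\mathrm{div}(X(x))-\langle X(x),x\rangle\big)\Big).
\end{flalign*}
Multiplying by $f(x)=\langle X(x),N(x)\rangle$ and integrating over $\redA$ produces exactly the second and third integrals on the right-hand side of \eqref{four30}, while the double integral of $G(x,y)f(x)f(y)$ is already present from Theorem \ref{thm4}; summing the two contributions gives \eqref{four30}.

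The computation is routine, so there is no genuine obstacle here; the only points requiring care are the justification of differentiating under the integral sign to identify $V(\cdot,0)$ with $\gamma_{\adimn}T_{\rho}1_{\Omega}$ (and hence to know $V(\cdot,0)$ is smooth near $\redA$, as Theorem \ref{thm4} requires), and keeping track of the $-\langle X,x\rangle$ term coming from $\overline{\nabla}\gamma_{\adimn}$. I would also remark that isolating the case $m=2$ is organizational: the identical argument, applied on each interface $\Sigma_{ij}$ with $1_{\Omega}$ replaced by $1_{\Omega_{i}}-1_{\Omega_{j}}$, will yield the general partition second variation used later in the paper.
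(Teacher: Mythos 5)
Your proposal is correct and follows essentially the same route as the paper: identify $V(x,0)=\gamma_{\adimn}(x)T_{\rho}1_{\Omega}(x)$ via \eqref{gdef} and \eqref{oudef}, apply Theorem \ref{thm4}, and expand the divergence term with the product rule and $\overline{\nabla}\gamma_{\adimn}(x)=-x\gamma_{\adimn}(x)$. The paper's proof is the same computation, written more tersely.
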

\begin{proof}
For all $x\in\R^{\adimn}$, we have $V(x,0)\stackrel{\eqref{two9c}}{=}\int_{\Omega}G(x,y)\,\d y\stackrel{\eqref{oudef}}{=}\gamma_{\adimn}(x)T_{\rho}1_{\Omega}(x)$.  So, from Theorem \ref{thm4},
\begin{flalign*}
&\frac{1}{2}\frac{\d^{2}}{\d s^{2}}\Big|_{s=0}\int_{\R^{\adimn}}\int_{\R^{\adimn}} 1_{\Omega^{(s)}}(y)G(x,y) 1_{\Omega^{(s)}}(x)\,\d x\d y\\
&\qquad\qquad\qquad\qquad=\int_{\redA}\int_{\redA}G(x,y)\langle X(x),N(x)\rangle\langle X(y),N(y)\rangle \,\d x\d y\\
&\qquad\qquad\qquad\qquad\qquad+\int_{\redA}(\sum_{i=1}^{\adimn}T_{\rho}1_{\Omega}(x)\frac{\partial}{\partial x_{i}}X_{i}(x)-x_{i}T_{\rho}1_{\Omega}(x)X_{i}(x)\\
&\qquad\qquad\qquad\qquad\qquad\,\,+\frac{\partial}{\partial x_{i}}T_{\rho}1_{\Omega}(x)X_{i}(x))\langle X(x),N(x)\rangle \gamma_{\adimn}(x)\,\d x.
\end{flalign*}
That is, \eqref{four30} holds.
\end{proof}

\begin{lemma}[\embolden{Volume Preserving Second Variation of Maximizers}]\label{lemma7p}
Suppose $\Omega,\Omega^{c}\subset\R^{\adimn}$ maximize Problem \ref{prob2} for $0<\rho<1$ and $m=2$.  Let $\{\Omega^{(s)}\}_{s\in(-1,1)}$ be the corresponding variation of $\Omega$.  Denote $f(x)\colonequals\langle X(x),N(x)\rangle$ for all $x\in\Sigma\colonequals \redb\Omega$.  If
$$\int_{\Sigma}f(x)\gamma_{\adimn}(x)\,\d x=0,$$
Then there exists an extension of the vector field $X|_{\Sigma}$ such that the corresponding variation of $\{\Omega^{(s)}\}_{s\in(-1,1)}$ satisfies
\begin{equation}\label{four32p}
\begin{aligned}
&\frac{1}{2}\frac{\d^{2}}{\d s^{2}}\Big|_{s=0}\int_{\R^{\adimn}}\int_{\R^{\adimn}} 1_{\Omega^{(s)}}(y)G(x,y) 1_{\Omega^{(s)}}(x)\,\d x\d y\\
&\qquad\qquad=\int_{\redA}\int_{\redA}G(x,y)f(x)f(y) \,\d x\d y
-\int_{\redA}\vnorm{\overline{\nabla} T_{\rho}1_{\Omega}(x)}(f(x))^{2} \gamma_{\adimn}(x)\,\d x.
\end{aligned}
\end{equation}
Moreover,
\begin{equation}\label{nabeq2}
\overline{\nabla}T_{\rho}1_{\Omega}(x)=-N(x)\vnorm{\overline{\nabla}T_{\rho}1_{\Omega}(x)},\qquad\forall\,x\in\Sigma.
\end{equation}
\end{lemma}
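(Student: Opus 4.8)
The plan is to obtain \eqref{four32p} by combining the general second variation formula in Lemma~\ref{lemma6} with the first-variation characterization of maximizers from Lemma~\ref{firstvarmaxns}, and then to extract the sign information \eqref{nabeq2} by testing \eqref{four32p} against a carefully chosen family of volume-preserving variations. First I would record that, since $\Omega,\Omega^c$ maximize Problem~\ref{prob2} with $m=2$, Lemma~\ref{firstvarmaxns} gives a constant $c\in\R$ with $T_\rho(1_\Omega-1_{\Omega^c})(x)=c$ for all $x\in\Sigma$; since $1_\Omega+1_{\Omega^c}=1$ and $T_\rho 1=1$, this is equivalent to $T_\rho 1_\Omega(x)=(1+c)/2=:c'$ being \emph{constant on $\Sigma$}. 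Thus the third integrand on the right of \eqref{four30} carries the constant factor $T_\rho 1_\Omega(x)=c'$, and by the second-variation formula for Gaussian volume in Lemma~\ref{lemma41} the integral $\int_\Sigma(\mathrm{div}(X)-\langle X,x\rangle)f(x)\gamma_{\adimn}(x)\,\d x$ equals $\frac{\d^2}{\d s^2}|_{s=0}\gamma_{\adimn}(\Omega^{(s)})$. The extension Lemma~\ref{lemma27} (in the case $m=2$, using $\Sigma_{12}=\Sigma$ and the hypothesis $\int_\Sigma f\gamma_{\adimn}=0$) lets me replace $X|_\Sigma$ by an extension for which $\gamma_{\adimn}(\Omega^{(s)})$ is constant in $s$; for that extension the volume second variation vanishes, so the entire third line of \eqref{four30} drops out.

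Next I would treat the second line of \eqref{four30}, namely $\int_\Sigma\langle\overline\nabla T_\rho 1_\Omega(x),X(x)\rangle f(x)\gamma_{\adimn}(x)\,\d x$. Because $T_\rho 1_\Omega$ is constant on the level set $\Sigma$, its gradient $\overline\nabla T_\rho 1_\Omega(x)$ is parallel to the normal $N(x)$ at each $x\in\Sigma$; writing $\overline\nabla T_\rho 1_\Omega(x)=\lambda(x)N(x)$ with $\lambda(x)=\langle\overline\nabla T_\rho 1_\Omega(x),N(x)\rangle$ we get $\langle\overline\nabla T_\rho 1_\Omega(x),X(x)\rangle=\lambda(x)\langle N(x),X(x)\rangle=\lambda(x)f(x)$, so the second line becomes $\int_\Sigma\lambda(x)f(x)^2\gamma_{\adimn}(x)\,\d x$. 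Substituting both simplifications into \eqref{four30} yields
\[
\frac{1}{2}\frac{\d^{2}}{\d s^{2}}\Big|_{s=0}\int\int 1_{\Omega^{(s)}}G 1_{\Omega^{(s)}}
=\int_{\redA}\int_{\redA}G(x,y)f(x)f(y)\,\d x\d y+\int_{\redA}\lambda(x)f(x)^{2}\gamma_{\adimn}(x)\,\d x,
\]
which is \eqref{four32p} once we show $\lambda(x)=-\vnorm{\overline\nabla T_\rho 1_\Omega(x)}$, i.e. \eqref{nabeq2}.

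It remains to pin down the sign, and this is the step I expect to be the main obstacle. The identity $|\lambda(x)|=\vnorm{\overline\nabla T_\rho 1_\Omega(x)}$ is automatic from parallelism, so the content is that $\lambda(x)\le 0$, equivalently that $T_\rho 1_\Omega$ is \emph{decreasing} in the exterior normal direction — intuitively clear (moving out of $\Omega$ should decrease the "amount of $\Omega$ nearby"), but it must be forced by optimality rather than assumed. The argument I would run: suppose $\lambda>0$ on a subset of $\Sigma$ of positive surface measure. By Lemma~\ref{gpsd} the quadratic form $\int_\Sigma\int_\Sigma G(x,y)f(x)f(y)\,\d x\d y$ is nonnegative for every continuous compactly supported $f$, so for any volume-preserving test function $f$ (i.e. $\int_\Sigma f\gamma_{\adimn}=0$) supported where $\lambda>0$, the right-hand side above is strictly positive, contradicting that $\Omega$ maximizes noise stability (which forces the second variation to be $\le 0$ along any volume-preserving variation). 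Constructing such an $f$ supported in a small ball where $\lambda>0$ with $\int_\Sigma f\gamma_{\adimn}=0$ is routine (take $f$ a difference of two bumps), so $\lambda\le 0$ a.e. on $\Sigma$; by continuity of $\overline\nabla T_\rho 1_\Omega$ (from \eqref{oudef}) this gives $\lambda\le 0$ everywhere, establishing \eqref{nabeq2} and completing \eqref{four32p}. The delicate points to watch are the interchange of $s$-derivatives with the double integral (justified by smoothness and compact support of $X$, as in Theorem~\ref{thm4}), and ensuring the extension from Lemma~\ref{lemma27} does not alter $f=\langle X,N\rangle$ on $\Sigma$ — which is exactly the content of that lemma.
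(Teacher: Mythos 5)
Your proposal is correct and follows essentially the same route as the paper: kill the third term of \eqref{four30} using the constancy of $T_{\rho}1_{\Omega}$ on $\Sigma$ together with Lemmas \ref{lemma41} and \ref{lemma27}, use parallelism of $\overline{\nabla}T_{\rho}1_{\Omega}$ and $N$ to rewrite the second term, and rule out the positive sign by exhibiting a volume-preserving variation whose second variation would be strictly positive via Lemma \ref{gpsd}. Your version is in fact slightly more careful than the paper's on the sign step, since you explicitly arrange $\int_{\Sigma}f\gamma_{\adimn}=0$ (difference of two bumps) before invoking the second-variation inequality.
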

\begin{proof}
From Lemma \ref{latelemma3}, $T_{\rho}1_{\Omega}(x)$ is constant for all $x\in\Sigma$.  So, from Lemma \ref{lemma41} and Lemma \ref{lemma27}, the last term in \eqref{four30} vanishes, i.e.
\begin{flalign*}
&\frac{1}{2}\frac{\d^{2}}{\d s^{2}}\Big|_{s=0}\int_{\R^{\adimn}} \int_{\R^{\adimn}}1_{\Omega^{(s)}}(y)G(x,y) 1_{\Omega^{(s)}}(x)\,\d x\d y\\
&\qquad\qquad\qquad=\int_{\redA}\int_{\redA}G(x,y)\langle X(x),N(x)\rangle \langle X(y),N(y)\rangle \,\d x\d y\\
&\qquad\qquad\qquad\qquad+\int_{\redA}\langle\overline{\nabla} T_{\rho}1_{\Omega}(x),X(x)\rangle \langle X(x),N(x)\rangle \gamma_{\adimn}(x)\,\d x.
\end{flalign*}
(Here $\overline{\nabla}$ denotes the gradient in $\R^{\adimn}$.)  Since $T_{\rho}1_{\Omega}(x)$ is constant for all $x\in\partial\Omega$ by Lemma \ref{firstvarmaxns}, $\overline{\nabla} T_{\rho}1_{\Omega}(x)$ is parallel to $N(x)$ for all $x\in\partial\Omega$.  That is,
\begin{equation}\label{nabeq}
\overline{\nabla} T_{\rho}1_{\Omega}(x)=\pm\vnorm{\overline{\nabla} T_{\rho}1_{\Omega}(x)}N(x),\qquad\forall\,x\in\partial\Omega.
\end{equation}
In fact, we must have a negative sign in \eqref{nabeq}, otherwise we could find a vector field $X$ supported near $x\in\partial\Omega$ such that \eqref{nabeq} has a positive sign, and then since $G$ is a positive semidefinite function by Lemma \ref{gpsd}, we would have
\begin{flalign*}
&\frac{1}{2}\frac{\d^{2}}{\d s^{2}}\Big|_{s=0}\int_{\R^{\adimn}}\int_{\R^{\adimn}} 1_{\Omega^{(s)}}(y)G(x,y) 1_{\Omega^{(s)}}(x)\,\d x\d y\\
&\qquad\qquad\qquad\qquad\geq\int_{\redA}\langle\overline{\nabla} T_{\rho}1_{\Omega}(x),X(x)\rangle \langle X(x),N(x)\rangle \gamma_{\adimn}(x)\,\d x>0,
\end{flalign*}
a contradiction.  In summary,
\begin{flalign*}
&\frac{1}{2}\frac{\d^{2}}{\d s^{2}}\Big|_{s=0}\int_{\R^{\adimn}}\int_{\R^{\adimn}} 1_{\Omega^{(s)}}(y)G(x,y) 1_{\Omega^{(s)}}(x)\,\d x\d y\\
&\qquad\qquad\qquad=\int_{\redA}\int_{\redA}G(x,y)\langle X(x),N(x)\rangle \langle X(y),N(y)\rangle \,\d x\d y\\
&\qquad\qquad\qquad\qquad-\int_{\redA}\vnorm{\overline{\nabla} T_{\rho}1_{\Omega}(x)}\langle X(x),N(x)\rangle^{2} \gamma_{\adimn}(x)\,\d x.
\end{flalign*}
\end{proof}

\subsection{More than Two Sets}

We can now generalize Section \ref{twossub} to the case of $m>2$ sets.

\begin{lemma}[\embolden{Second Variation of Noise Stability, Multiple Sets}]\label{lemma6v2}
Let $\Omega_{1},\ldots,\Omega_{m}\subset\R^{\adimn}$ be a partition of $\R^{\adimn}$ into measurable sets such that $\partial\Omega_{i}$ is a locally finite union of $C^{\infty}$ manifolds for all $1\leq i\leq m$.  Let $X\in C_{0}^{\infty}(\R^{\adimn},\R^{\adimn})$.  Let $\{\Omega_{i}^{(s)}\}_{s\in(-1,1)}$ be the corresponding variation of $\Omega_{i}$ for all $1\leq i\leq m$.  Denote $f_{ij}(x)\colonequals\langle X(x),N_{ij}(x)\rangle$ for all $x\in\Sigma_{ij}\colonequals (\redb\Omega_{i})\cap(\redb\Omega_{j})$.  We let $N$ denote the exterior pointing unit normal vector to $\redb\Omega_{i}$ for any $1\leq i\leq m$.  Then
\begin{equation}\label{four30v2}
\begin{aligned}
&\frac{1}{2}\frac{\d^{2}}{\d s^{2}}\Big|_{s=0}\sum_{i=1}^{m}\int_{\R^{\adimn}} \int_{\R^{\adimn}} 1_{\Omega_{i}^{(s)}}(y)G(x,y) 1_{\Omega_{i}^{(s)}}(x)\,\d x\d y\\
&\qquad=\sum_{1\leq i<j\leq m}\int_{\Sigma_{ij}}\Big[\Big(\int_{\redb\Omega_{i}}-\int_{\redb\Omega_{j}}\Big)G(x,y)\langle X(y),N(y)\rangle \,\d y\Big] f_{ij}(x) \,\d x\\
&\qquad\qquad+\int_{\Sigma_{ij}}\langle\overline{\nabla} T_{\rho}(1_{\Omega_{i}}-1_{\Omega_{j}})(x),X(x)\rangle f_{ij}(x) \gamma_{\adimn}(x)\,\d x\\
&\qquad\qquad+\int_{\Sigma_{ij}} T_{\rho}(1_{\Omega_{i}}-1_{\Omega_{j}})(x)\Big(\mathrm{div}(X(x))-\langle X(x),x\rangle\Big)f_{ij}(x)\gamma_{\adimn}(x)\,\d x.
\end{aligned}
\end{equation}
\end{lemma}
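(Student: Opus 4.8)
The plan is to proceed exactly as in the two-set case (Lemma \ref{lemma6}), but applied set-by-set to each of the $m$ indicator functions $1_{\Omega_i}$ and then reorganize the boundary integrals according to which interface $\Sigma_{ij}$ they live on. First I would apply Theorem \ref{thm4} to each term $\int\int 1_{\Omega_i^{(s)}}(y)G(x,y)1_{\Omega_i^{(s)}}(x)\,\d x\,\d y$ individually. For a fixed $i$, writing $V_i(x,0)=\int_{\Omega_i}G(x,y)\,\d y=\gamma_{\adimn}(x)T_\rho 1_{\Omega_i}(x)$ by \eqref{oudef}, Theorem \ref{thm4} gives the sum of a double boundary integral over $\redb\Omega_i\times\redb\Omega_i$ with weight $G(x,y)\langle X(x),N(x)\rangle\langle X(y),N(y)\rangle$ and a single boundary integral over $\redb\Omega_i$ of $\mathrm{div}(\gamma_{\adimn}T_\rho 1_{\Omega_i}X)\langle X,N\rangle$, which expands (just as in Lemma \ref{lemma6}) into $\langle\overline\nabla T_\rho 1_{\Omega_i},X\rangle\langle X,N\rangle\gamma_{\adimn}+T_\rho 1_{\Omega_i}(\mathrm{div}X-\langle X,x\rangle)\langle X,N\rangle\gamma_{\adimn}$.

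Next I would sum over $i$ and account for the fact that the reduced boundary $\redb\Omega_i$ is, up to a set of Hausdorff dimension $\le \sdimn-1$ (hence negligible for these surface integrals, by Lemma \ref{reglem} and the discussion after it), the disjoint union $\bigcup_{j\neq i}\Sigma_{ij}$. On $\Sigma_{ij}$ the exterior normal to $\Omega_i$ is $N_{ij}$ and the exterior normal to $\Omega_j$ is $N_{ji}=-N_{ij}$, so $\langle X,N\rangle$ restricted from the $\Omega_i$ side equals $f_{ij}$ and restricted from the $\Omega_j$ side equals $-f_{ij}$. Thus each unordered interface $\Sigma_{ij}$ ($i<j$) receives contributions from both the $i$-term and the $j$-term. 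For the single integrals, the $\Omega_i$ contribution on $\Sigma_{ij}$ is $(\langle\overline\nabla T_\rho 1_{\Omega_i},X\rangle+T_\rho 1_{\Omega_i}(\mathrm{div}X-\langle X,x\rangle))f_{ij}\gamma_{\adimn}$ and the $\Omega_j$ contribution on the same interface, using $N=N_{ji}=-N_{ij}$ so that its ``$f$'' is $-f_{ij}$ and $(-f_{ij})^2$-type products... more precisely $\langle\overline\nabla T_\rho 1_{\Omega_j},X\rangle\langle X,N_{ji}\rangle=-\langle\overline\nabla T_\rho 1_{\Omega_j},X\rangle f_{ij}$, gives after adding the two the factor $\langle\overline\nabla T_\rho(1_{\Omega_i}-1_{\Omega_j}),X\rangle f_{ij}\gamma_{\adimn}$ and likewise $T_\rho(1_{\Omega_i}-1_{\Omega_j})(\mathrm{div}X-\langle X,x\rangle)f_{ij}\gamma_{\adimn}$, matching the last two lines of \eqref{four30v2}.

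For the double boundary integrals I would be slightly more careful: summing $\sum_i\int_{\redb\Omega_i}\int_{\redb\Omega_i}G(x,y)\langle X(x),N(x)\rangle\langle X(y),N(y)\rangle$ and splitting each $\redb\Omega_i$ into its pieces $\Sigma_{ij}$, the diagonal-in-index contributions pair the same interface with itself and the cross contributions pair $\Sigma_{ij}$ with $\Sigma_{i\ell}$. Collecting all terms in which the outer variable $x$ lies on $\Sigma_{ij}$ and using $N=N_{ij}$ there (so the outer factor is $f_{ij}(x)$), the inner integral runs over $\redb\Omega_i$ with weight $\langle X(y),N(y)\rangle=f_{i\bullet}(y)$ on each of its faces, and also over $\redb\Omega_j$ coming from the $j$-term with the opposite sign, producing exactly the bracket $\big(\int_{\redb\Omega_i}-\int_{\redb\Omega_j}\big)G(x,y)\langle X(y),N(y)\rangle\,\d y$ in the first line of \eqref{four30v2}, where $N(y)$ denotes the exterior normal to $\Omega_i$ (resp. $\Omega_j$) on the respective domain of integration. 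This bookkeeping of signs and interfaces is the only real content; once it is set up, the identity \eqref{four30v2} follows termwise from Theorem \ref{thm4}. The step I expect to require the most care is precisely this combinatorial sign-tracking on the double-boundary term, making sure that the pieces $\redb\Omega_i\setminus\bigcup_{j}\Sigma_{ij}$ (Hausdorff dimension $\le\sdimn-1$) genuinely contribute nothing and that each unordered interface is counted once with the correct orientation convention stated in the lemma.
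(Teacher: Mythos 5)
Your proposal is correct and is essentially the paper's own proof: the paper likewise applies the single-set second variation (Lemma \ref{lemma6}, i.e.\ Theorem \ref{thm4} with $V_i(x,0)=\gamma_{\adimn}(x)T_{\rho}1_{\Omega_i}(x)$) to each $\Omega_i$, sums over $i$, and uses $N_{ij}=-N_{ji}$ to regroup the boundary integrals by interface. The only difference is that you spell out the interface/sign bookkeeping that the paper compresses into the single line ``Summing over $1\leq i\leq m$ and using $N_{ij}=-N_{ji}$ completes the proof,'' and your accounting (including discarding the Hausdorff-dimension-$\leq\sdimn-1$ remainder of $\redb\Omega_i\setminus\bigcup_{j}\Sigma_{ij}$) is accurate.
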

\begin{proof}
From Lemma \ref{lemma6},
\begin{flalign*}
&\frac{1}{2}\frac{\d^{2}}{\d s^{2}}\Big|_{s=0}\int_{\R^{\adimn}} \int_{\R^{\adimn}} 1_{\Omega_{i}^{(s)}}(y)G(x,y) 1_{\Omega_{i}^{(s)}}(x)\,\d x\d y\\
&\qquad\qquad=\int_{\redb\Omega_{i}}\int_{\redb\Omega_{i}}G(x,y)\langle X(x),N(x)\rangle \langle X(y),N(y)\rangle \,\d x\d y\\
&\qquad\qquad+\int_{\redb\Omega_{i}}\langle\overline{\nabla} T_{\rho}1_{\Omega_{i}}(x),X(x)\rangle \langle X(x),N(x)\rangle \gamma_{\adimn}(x)\,\d x\\
&\qquad\qquad+\int_{\redb\Omega_{i}} T_{\rho}1_{\Omega_{i}}(x)\Big(\mathrm{div}(X(x))-\langle X(x),x\rangle\Big)\langle X(x),N(x)\rangle\gamma_{\adimn}(x)\,\d x.
\end{flalign*}
Summing over $1\leq i\leq m$ and using $N_{ij}=-N_{ji}$ completes the proof.
\end{proof}

Below, we need the following combinatorial Lemma, the case $m=3$ being treated in \cite[Proposition 3.3]{hutchings02}.
\begin{lemma}[{\cite[Lemma 4.6]{heilman18b}}]\label{lemma25.3}
Let $m\geq3$.  Let
$$D_{1}\colonequals \{(x_{ij})_{1\leq i\neq j\leq m}\in\R^{\binom{m}{2}}\colon \forall\,1\leq i\neq j\leq m,\quad x_{ij}=-x_{ji},\,\sum_{j\in\{1,\ldots,m\}\colon j\neq i}x_{ij}=0\}.$$
\begin{flalign*}
D_{2}\colonequals \{(x_{ij})_{1\leq i\neq j\leq m}\in\R^{\binom{m}{2}}
&\colon  \forall\,1\leq i\neq j\leq m,\quad x_{ij}=-x_{ji},\\
&\forall\,1\leq i<j<k\leq m\quad x_{ij}+x_{jk}+x_{ki}=0\}.
\end{flalign*}
Let $x\in D_{1}$ and let $y\in D_{2}$.  Then $\sum_{1\leq i<j\leq m}x_{ij}y_{ij}=0$.
\end{lemma}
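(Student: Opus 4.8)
The plan is to recognize the lemma as the orthogonality of the cycle space and the cut space of the complete graph $K_{m}$ on vertex set $\{1,\dots,m\}$: thinking of an antisymmetric array $(x_{ij})$ as a $1$-chain on $K_{m}$, the condition $\sum_{j\neq i}x_{ij}=0$ says the chain is divergence-free (a circulation), while the condition $x_{ij}+x_{jk}+x_{ki}=0$ on every triangle says the array is a discrete gradient (a tension); these two subspaces of $\R^{\binom{m}{2}}$ are orthogonal, which is exactly the claim. I would make this concrete in three short steps.

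First I would upgrade the triangle condition to full symmetry: although $D_{2}$ only imposes $y_{ij}+y_{jk}+y_{ki}=0$ for ordered triples $i<j<k$, the antisymmetry $y_{ij}=-y_{ji}$ forces $y_{ab}+y_{bc}+y_{ca}=0$ for \emph{every} triple of distinct indices, since a cyclic permutation of $(a,b,c)$ leaves this sum unchanged and a transposition negates it. Next I would produce a potential: given $y\in D_{2}$, set $\phi_{1}\colonequals 0$ and $\phi_{i}\colonequals y_{i1}$ for $2\le i\le m$. Applying the symmetric triangle identity to the triple $\{i,j,1\}$ gives $y_{ij}=-y_{j1}-y_{1i}=y_{i1}-y_{j1}=\phi_{i}-\phi_{j}$ whenever $i,j\neq 1$ are distinct, and the case $j=1$ is immediate; hence $y_{ij}=\phi_{i}-\phi_{j}$ for all $i\neq j$.

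Finally, using $x_{ij}y_{ij}=x_{ji}y_{ji}$ to symmetrize, $\sum_{1\le i<j\le m}x_{ij}y_{ij}=\tfrac{1}{2}\sum_{i\neq j}x_{ij}(\phi_{i}-\phi_{j})$; splitting this into two sums, the $\phi_{i}$ part equals $\tfrac{1}{2}\sum_{i}\phi_{i}\sum_{j\neq i}x_{ij}=0$, and the $\phi_{j}$ part equals $-\tfrac{1}{2}\sum_{j}\phi_{j}\sum_{i\neq j}x_{ij}=\tfrac{1}{2}\sum_{j}\phi_{j}\sum_{i\neq j}x_{ji}=0$, both vanishing by the defining condition of $D_{1}$ (the second also using $x_{ij}=-x_{ji}$). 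This yields $\sum_{1\le i<j\le m}x_{ij}y_{ij}=0$.

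I do not expect a genuine obstacle here: the statement is elementary linear algebra once the cycle/cut picture is identified, and the only point demanding care is the index bookkeeping — correctly passing between sums over $i<j$ and sums over all ordered pairs $i\neq j$, and keeping the antisymmetry signs straight in the last display. As a cross-check one can bypass the potential altogether: by Step 1 and Step 2 the space $D_{2}$ is spanned by the elementary tensions $y^{(p)}$ with $y^{(p)}_{ij}=1_{\{i=p\}}-1_{\{j=p\}}$ for $p=1,\dots,m$, and for each such $p$ one computes directly $\sum_{1\le i<j\le m}x_{ij}y^{(p)}_{ij}=\sum_{j\neq p}x_{pj}=0$ from the $D_{1}$ condition, so the pairing vanishes on a spanning set of $D_{2}$ and hence, by bilinearity, on all of $D_{2}$.
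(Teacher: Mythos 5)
Your proof is correct, and it rests on the same underlying fact the paper invokes: the orthogonality of the cut space (your $D_2$, the tensions $y_{ij}=\phi_i-\phi_j$) and the cycle space (your $D_1$, the circulations) of $K_m$. The difference is one of completeness rather than of route: the paper's in-text proof simply asserts that $D_1$ and $D_2$ are orthogonal complements of each other (deferring the verification to the cited Lemma 4.6 of the earlier paper), whereas you actually carry out the verification --- upgrading the triangle identity to all ordered triples, exhibiting the potential $\phi_i=y_{i1}$, and symmetrizing the pairing so that each factor $\sum_{j\neq i}x_{ij}$ vanishes by the $D_1$ condition. All the index bookkeeping in your final display checks out (the second half of the sum needs the antisymmetry of $x$ to convert $\sum_{i\neq j}x_{ij}$ at fixed $j$ into the $D_1$ constraint, which you do correctly), and your spanning-set cross-check via the elementary tensions $y^{(p)}$ is a valid standalone alternative. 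Note that your argument only establishes $D_1\perp D_2$, not the stronger claim $D_1\oplus D_2=\R^{\binom{m}{2}}$ asserted in the paper's proof, but only orthogonality is needed for the lemma as stated.
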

\begin{proof}
$D_{1}$ is defined to be perpendicular to vectors in $D_{2}$, and vice versa.  That is, $D_{1}$ and $D_{2}$ are orthogonal complements of each other, and in terms of vector spaces, $D_{1}\oplus D_{2}=\R^{\binom{m}{2}}$.  Consequently, the inner product of any $x\in D_{1}$ and $y\in D_{2}$ is zero.
\end{proof}

\begin{lemma}[\embolden{Volume Preserving Second Variation of Maximizers, Multiple Sets}]\label{lemma7r}
Let $\Omega_{1},\ldots,\Omega_{m}\subset\R^{\adimn}$ be a partition of $\R^{\adimn}$ into measurable sets such that $\partial\Omega_{i}$ is a locally finite union of $C^{\infty}$ manifolds for all $1\leq i\leq m$.  Let $X\in C_{0}^{\infty}(\R^{\adimn},\R^{\adimn})$.  Let $\{\Omega_{i}^{(s)}\}_{s\in(-1,1)}$ be the corresponding variation of $\Omega_{i}$ for all $1\leq i\leq m$.  Denote $f_{ij}(x)\colonequals\langle X(x),N_{ij}(x)\rangle$ for all $x\in\Sigma_{ij}\colonequals (\redb\Omega_{i})\cap(\redb\Omega_{j}) $.  We let $N$ denote the exterior pointing unit normal vector to $\redb\Omega_{i}$ for any $1\leq i\leq m$.  Then
\begin{equation}\label{four32pv2}
\begin{aligned}
&\frac{1}{2}\frac{\d^{2}}{\d s^{2}}\Big|_{s=0}\sum_{i=1}^{m}\int_{\R^{\adimn}} \int_{\R^{\adimn}} 1_{\Omega_{i}^{(s)}}(y)G(x,y) 1_{\Omega_{i}^{(s)}}(x)\,\d x\d y\\
&\qquad\qquad\qquad=\sum_{1\leq i<j\leq m}\int_{\Sigma_{ij}}\Big[\Big(\int_{\redb\Omega_{i}}-\int_{\redb\Omega_{j}}\Big)G(x,y)\langle X(y),N(y)\rangle \,\d y\Big] f_{ij}(x) \,\d x\\
&\qquad\qquad\qquad\qquad-\int_{\Sigma_{ij}}\vnorm{\overline{\nabla} T_{\rho}(1_{\Omega_{i}}-1_{\Omega_{j}})(x)}(f_{ij}(x))^{2} \gamma_{\adimn}(x)\,\d x.
\end{aligned}
\end{equation}
Also,
\begin{equation}\label{nabeq3}
\overline{\nabla}T_{\rho}(1_{\Omega_{i}}-1_{\Omega_{j}})(x)=-N_{ij}(x)\vnorm{\overline{\nabla}T_{\rho}(1_{\Omega_{i}}-1_{\Omega_{j}})(x)},\qquad\forall\,x\in\Sigma_{ij}.
\end{equation}
Moreover, $\vnorm{\overline{\nabla} T_{\rho}(1_{\Omega_{i}}-1_{\Omega_{j}})(x)}>0$ for all $x\in\Sigma_{ij}$, except on a set of Hausdorff dimension at most $\sdimn-1$.
\end{lemma}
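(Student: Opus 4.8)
The plan is to run, interface by interface, the argument of Lemma~\ref{lemma7p} for the case $m=2$, and to absorb the additional $T_{\rho}(1_{\Omega_{i}}-1_{\Omega_{j}})\big(\mathrm{div}(X)-\langle X,x\rangle\big)f_{ij}$ terms appearing in \eqref{four30v2} by the combinatorial Lemma~\ref{lemma25.3}. Throughout, as the name of the lemma indicates and parallel to Lemma~\ref{lemma7p}, I take $\Omega_{1},\ldots,\Omega_{m}$ to maximize Problem~\ref{prob2}, and I take $X$ whose restrictions $(f_{ij})_{i<j}$ satisfy the balance condition \eqref{eight2}, so that Lemma~\ref{lemma27} applies.

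First I would apply Lemma~\ref{firstvarmaxns}: for each $i<j$ there is $c_{ij}\in\R$ with $T_{\rho}(1_{\Omega_{i}}-1_{\Omega_{j}})\equiv c_{ij}$ on $\Sigma_{ij}$. Consequently the last line of \eqref{four30v2} equals $\sum_{i<j}c_{ij}g_{ij}$, where $g_{ij}\colonequals\int_{\Sigma_{ij}}\big(\mathrm{div}(X)-\langle X,x\rangle\big)f_{ij}(x)\gamma_{\adimn}(x)\,\d x$; I extend by $c_{ji}\colonequals-c_{ij}$ and $g_{ji}\colonequals-g_{ij}$, consistently with the definitions since $\Sigma_{ij}=\Sigma_{ji}$, $f_{ji}=-f_{ij}$, $N_{ij}=-N_{ji}$. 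Next, choosing via Lemma~\ref{lemma27} an extension of $X$ off $\cup_{i<j}\Sigma_{ij}$ for which every $\Omega_{i}^{(s)}$ keeps constant Gaussian volume forces $\frac{\d^{2}}{\d s^{2}}\big|_{s=0}\gamma_{\adimn}(\Omega_{i}^{(s)})=0$; the second-variation part of Lemma~\ref{lemma41} applied to each $\Omega_{i}$ (using $\redb\Omega_{i}=\cup_{j\neq i}\Sigma_{ij}$ up to a null set, with $N=N_{ij}$ on $\Sigma_{ij}$) then gives $\sum_{j\neq i}g_{ij}=0$ for every $i$, i.e.\ $(g_{ij})\in D_{1}$ in the notation of Lemma~\ref{lemma25.3}. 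On the other hand $(c_{ij})\in D_{2}$: antisymmetry is built in, and the cocycle relation $c_{ij}+c_{jk}+c_{ki}=0$ follows from the pointwise identity $T_{\rho}(1_{\Omega_{i}}-1_{\Omega_{j}})+T_{\rho}(1_{\Omega_{j}}-1_{\Omega_{k}})+T_{\rho}(1_{\Omega_{k}}-1_{\Omega_{i}})\equiv 0$ evaluated at a point of $\Sigma_{ij}\cap\Sigma_{jk}\cap\Sigma_{ki}$ whenever all three interfaces carry positive $\sdimn$-dimensional measure; the remaining $c_{ij}$ (those with $g_{ij}=0$ because $\Sigma_{ij}$ is $\sdimn$-null) do not affect $\sum_{i<j}c_{ij}g_{ij}$ and may be chosen to respect the cocycle. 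Lemma~\ref{lemma25.3} now yields $\sum_{i<j}c_{ij}g_{ij}=0$, so the last line of \eqref{four30v2} vanishes.

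It remains to analyze the middle line of \eqref{four30v2}. Constancy of $T_{\rho}(1_{\Omega_{i}}-1_{\Omega_{j}})$ along $\Sigma_{ij}$ makes its gradient normal there, so $\overline{\nabla}T_{\rho}(1_{\Omega_{i}}-1_{\Omega_{j}})(x)=\pm\vnorm{\overline{\nabla}T_{\rho}(1_{\Omega_{i}}-1_{\Omega_{j}})(x)}N_{ij}(x)$ on $\Sigma_{ij}$, consistent with \eqref{zero11}. I would first record that $\vnorm{\overline{\nabla}T_{\rho}(1_{\Omega_{i}}-1_{\Omega_{j}})(x)}>0$ for $\sdimn$-a.e.\ $x\in\Sigma_{ij}$: were it to vanish on a subset of $\Sigma_{ij}$ of positive $\sdimn$-dimensional measure, then $T_{\rho}(1_{\Omega_{i}}-1_{\Omega_{j}})$, which is real analytic (being $T_{\rho}$ of a bounded function), nonconstant, and a solution of the heat-type equation \eqref{oup}, would agree with the constant $c_{ij}$ to infinite order along that set, contradicting the strong unique continuation property already invoked in Lemma~\ref{reglem}; this proves the last assertion of the lemma. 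Substituting the gradient formula into \eqref{four30v2}, the middle line becomes a sum over $i<j$ of terms $\int_{\Sigma_{ij}}(\pm\vnorm{\overline{\nabla}T_{\rho}(1_{\Omega_{i}}-1_{\Omega_{j}})(x)})(f_{ij}(x))^{2}\gamma_{\adimn}(x)\,\d x$, the sign being that in the gradient formula. Suppose the $+$ sign held on a subset $A\subset\Sigma_{ij}$ of positive $\sdimn$-dimensional measure; shrinking $A$ so that it avoids the other interfaces (a lower-dimensional set, by regularity) and meets $\{\vnorm{\overline{\nabla}T_{\rho}(1_{\Omega_{i}}-1_{\Omega_{j}})}>0\}$, I choose $X$ with $f_{ij}$ a nonzero function supported in $A$ with $\int_{A}f_{ij}\gamma_{\adimn}=0$ (so \eqref{eight2} holds) and all other $f_{kl}\equiv 0$. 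The first line of \eqref{four30v2}, equal to $\sum_{k}\int_{\redb\Omega_{k}}\int_{\redb\Omega_{k}}G(x,y)\langle X(x),N(x)\rangle\langle X(y),N(y)\rangle\,\d x\,\d y$, is $\geq0$ by Lemma~\ref{gpsd}, while the middle line is now strictly positive; the second variation is therefore strictly positive, contradicting maximality. Hence the $-$ sign holds $\sdimn$-a.e.\ on every $\Sigma_{ij}$, which is \eqref{nabeq3}, and substituting it into \eqref{four30v2} (whose last line vanishes by the previous step) yields exactly \eqref{four32pv2}.

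I expect the principal obstacle to be the bookkeeping for $(c_{ij})\in D_{2}$: when some triple interface $\Sigma_{ij}\cap\Sigma_{jk}\cap\Sigma_{ki}$ is empty one must argue that only the $c_{ij}$ with $\Sigma_{ij}$ of positive $\sdimn$-dimensional measure enter $\sum_{i<j}c_{ij}g_{ij}$, and that these are consistently the differences of constants attached to the sets $\Omega_{i}$, so that the cocycle identity is available; given Lemma~\ref{lemma25.3}, everything else is a faithful, interface-by-interface repetition of the proof of Lemma~\ref{lemma7p}. A secondary technical point is checking that the localized test vector field in the sign argument is genuinely admissible---supported near a single interface, compatible with \eqref{eight2}, and extendable off the interfaces within $C_{0}^{\infty}$---which is precisely what Lemma~\ref{lemma27} provides.
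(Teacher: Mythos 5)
Your proposal is correct and follows essentially the same route as the paper's proof; indeed you supply more detail than the paper does at the step where Lemma \ref{lemma25.3} is invoked (the paper simply asserts that the last term of \eqref{four30v2} vanishes, while you verify $(g_{ij})\in D_{1}$ via volume preservation and $(c_{ij})\in D_{2}$ via the cocycle identity), and your sign and nondegeneracy arguments match the paper's. The one point you flag---getting $c_{ij}+c_{jk}+c_{ki}=0$ when the triple interface is empty---is also left implicit in the paper; a robust way to obtain it is a cyclic volume-exchange first variation (move equal Gaussian mass $\Omega_{j}\to\Omega_{i}$ across $\Sigma_{ij}$, $\Omega_{k}\to\Omega_{j}$ across $\Sigma_{jk}$, $\Omega_{i}\to\Omega_{k}$ across $\Sigma_{ki}$), which preserves all volumes and whose first variation is proportional to $c_{ij}+c_{jk}+c_{ki}$, hence must vanish at a maximizer.
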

\begin{proof}
From Lemma \ref{firstvarmaxns}, there exist constants $(c_{ij})_{1\leq i<j\leq m}$ such that $T_{\rho}(1_{\Omega_{i}}-1_{\Omega_{j}})(x)=c_{ij}$ for all $1\leq i<j\leq m$, for all $x\in\Sigma_{ij}$.  So, from
Lemma \ref{lemma6v2},
\begin{flalign*}
&\frac{1}{2}\frac{\d^{2}}{\d s^{2}}\Big|_{s=0}\sum_{i=1}^{m}\int_{\R^{\adimn}} \int_{\R^{\adimn}} 1_{\Omega_{i}^{(s)}}(y)G(x,y) 1_{\Omega_{i}^{(s)}}(x)\,\d x\d y\\
&\qquad=\sum_{1\leq i<j\leq m}\int_{\Sigma_{ij}}\Big[\Big(\int_{\redb\Omega_{i}}-\int_{\redb\Omega_{j}}\Big)G(x,y)\langle X(y),N(y)\rangle \,\d y\Big] \langle X(x),N_{ij}(x)\rangle \,\d x\\
&\qquad\qquad\qquad+\int_{\Sigma_{ij}}\langle\overline{\nabla} T_{\rho}(1_{\Omega_{i}}-1_{\Omega_{j}})(x),X(x)\rangle \langle X(x),N_{ij}(x)\rangle \gamma_{\adimn}(x)\,\d x\\
&\qquad\qquad\qquad+c_{ij}\int_{\Sigma_{ij}} \Big(\mathrm{div}(X(x))-\langle X(x),x\rangle\Big)\langle X(x),N_{ij}(x)\rangle\gamma_{\adimn}(x)\,\d x.
\end{flalign*}
The last term then vanishes by Lemma \ref{lemma25.3}.  That is,
\begin{flalign*}
&\frac{1}{2}\frac{\d^{2}}{\d s^{2}}\Big|_{s=0}\sum_{i=1}^{m}\int_{\R^{\adimn}} \int_{\R^{\adimn}} 1_{\Omega_{i}^{(s)}}(y)G(x,y) 1_{\Omega_{i}^{(s)}}(x)\,\d x\d y\\
&\qquad\qquad=\sum_{1\leq i<j\leq m}\int_{\Sigma_{ij}}\Big[\Big(\int_{\redb\Omega_{i}}-\int_{\redb\Omega_{j}}\Big)G(x,y)\langle X(y),N(y)\rangle \,\d y\Big] \langle X(x),N_{ij}(x)\rangle \,\d x\\
&\qquad\qquad\qquad+\int_{\Sigma_{ij}}\langle\overline{\nabla} T_{\rho}(1_{\Omega_{i}}-1_{\Omega_{j}})(x),X(x)\rangle \langle X(x),N_{ij}(x)\rangle \gamma_{\adimn}(x)\,\d x.
\end{flalign*}
Meanwhile, if $1\leq i<j\leq m$ is fixed, it follows from Lemma \ref{firstvarmaxns} that
\begin{equation}\label{nabeq2p}
\overline{\nabla} T_{\rho}(1_{\Omega_{i}}-1_{\Omega_{j}})(x)= \pm N_{ij}(x)\vnorm{\overline{\nabla }T_{\rho}(1_{\Omega_{i}}-1_{\Omega_{j}})(x)},\qquad\forall\,x\in\Sigma_{ij}.
\end{equation}
In fact, we must have a negative sign in \eqref{nabeq2p}, otherwise we could find a vector field $X$ supported near $x\in\Sigma_{ij}$ such that \eqref{nabeq} has a positive sign, and then since $G$ is a positive semidefinite function by Lemma \ref{gpsd}, we would have
\begin{flalign*}
&\frac{1}{2}\frac{\d^{2}}{\d s^{2}}\Big|_{s=0}\sum_{i=1}^{m}\int_{\R^{\adimn}} \int_{\R^{\adimn}} 1_{\Omega_{i}^{(s)}}(y)G(x,y) 1_{\Omega_{i}^{(s)}}(x)\,\d x\d y\\
&\qquad\qquad\geq\int_{\Sigma_{ij}}\langle\overline{\nabla} T_{\rho}(1_{\Omega_{i}}-1_{\Omega_{j}})(x),X(x)\rangle \langle X(x),N(x)\rangle \gamma_{\adimn}(x)\,\d x>0,
\end{flalign*}
a contradiction.  In summary,
\begin{flalign*}
&\frac{1}{2}\frac{\d^{2}}{\d s^{2}}\Big|_{s=0}\sum_{i=1}^{m}\int_{\R^{\adimn}} \int_{\R^{\adimn}} 1_{\Omega_{i}^{(s)}}(y)G(x,y) 1_{\Omega_{i}^{(s)}}(x)\,\d x\d y\\
&\qquad\qquad\qquad=\sum_{1\leq i<j\leq m}\int_{\partial\Sigma_{ij}}\Big[\Big(\int_{\partial\Omega_{i}}-\int_{\partial\Omega_{j}}\Big)G(x,y)\langle X(y),N_{ij}(y)\rangle \,\d y\Big] \langle X(x),N_{ij}(x)\rangle \,\d x\\
&\qquad\qquad\qquad\qquad-\int_{\Sigma_{ij}}\vnorm{\overline{\nabla} T_{\rho}(1_{\Omega_{i}}-1_{\Omega_{j}})(x)}\langle X(x),N_{ij}(x)\rangle^{2} \gamma_{\adimn}(x)\,\d x.
\end{flalign*}
\end{proof}

\section{Almost Eigenfunctions of the Second Variation}

For didactic purposes, we first consider the case $m=2$, and we then later consider the case $m>2$.

\subsection{Two Sets}
Let $\Sigma\colonequals\redb\Omega$.  For any bounded measurable $f\colon\Sigma\to\R$, define the following function (if it exists):
\begin{equation}\label{sdef}
S(f)(x)\colonequals (1-\rho^{2})^{-(\adimn)/2}(2\pi)^{-(\adimn)/2}\int_{\Sigma}f(y)e^{-\frac{\vnorm{y-\rho x}^{2}}{2(1-\rho^{2})}}\,\d y,\qquad\forall\,x\in\Sigma.
\end{equation}

\begin{lemma}[\embolden{Key Lemma, $m=2$, Translations as Almost Eigenfunctions}]\label{treig}
Let $\Omega,\Omega^{c}$ maximize Problem \ref{prob2} for $m=2$.  Let $v\in\R^{\adimn}$.  Then
$$S(\langle v,N\rangle)(x)=\langle v,N(x)\rangle\frac{1}{\rho}\vnorm{\overline{\nabla} T_{\rho}1_{\Omega}(x)},\qquad\forall\,x\in\Sigma.$$
\end{lemma}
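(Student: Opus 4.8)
The plan is to rewrite $S(\langle v,N\rangle)$ as (a constant multiple of) the ambient gradient $\overline{\nabla}T_{\rho}1_{\Omega}$ through an integration by parts, and then feed in the sign information already recorded in Lemma \ref{lemma7p}. First I would record the elementary identity that the kernel $K(x,y)\colonequals e^{-\vnorm{y-\rho x}^{2}/(2(1-\rho^{2}))}$ satisfies $\overline{\nabla}_{x}K(x,y)=-\rho\,\overline{\nabla}_{y}K(x,y)$ for all $x,y\in\R^{\adimn}$, which is a one-line chain-rule computation. Writing $c\colonequals(1-\rho^{2})^{-(\adimn)/2}(2\pi)^{-(\adimn)/2}$, the third expression for $G$ in \eqref{gdef} gives $\overline{\nabla}T_{\rho}1_{\Omega}(x)=c\int_{\Omega}\overline{\nabla}_{x}K(x,y)\,\d y=-\rho c\int_{\Omega}\overline{\nabla}_{y}K(x,y)\,\d y$, where differentiation under the integral is justified by Gaussian decay.

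Next I would apply the Gauss--Green (divergence) theorem coordinatewise to the vector fields $y\mapsto K(x,y)e_{i}$ on $\Omega$, using that $\partial\Omega$ is a locally finite union of $C^{\infty}$ manifolds by Lemma \ref{reglem} and that $\redb\Omega$ differs from $\partial\Omega$ by a set of Hausdorff dimension at most $\sdimn-1$. This yields $\int_{\Omega}\overline{\nabla}_{y}K(x,y)\,\d y=\int_{\Sigma}K(x,y)N(y)\,\d y$, hence $\overline{\nabla}T_{\rho}1_{\Omega}(x)=-\rho c\int_{\Sigma}K(x,y)N(y)\,\d y$. Comparing this with the definition \eqref{sdef} of $S$, for any fixed $v\in\R^{\adimn}$ I get $S(\langle v,N\rangle)(x)=c\int_{\Sigma}\langle v,N(y)\rangle K(x,y)\,\d y=\bigl\langle v,\,c\int_{\Sigma}N(y)K(x,y)\,\d y\bigr\rangle=-\tfrac{1}{\rho}\langle v,\overline{\nabla}T_{\rho}1_{\Omega}(x)\rangle$ for all $x\in\Sigma$. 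Finally I would substitute \eqref{nabeq2} from Lemma \ref{lemma7p}, namely $\overline{\nabla}T_{\rho}1_{\Omega}(x)=-N(x)\vnorm{\overline{\nabla}T_{\rho}1_{\Omega}(x)}$ on $\Sigma$, to get $\langle v,\overline{\nabla}T_{\rho}1_{\Omega}(x)\rangle=-\langle v,N(x)\rangle\vnorm{\overline{\nabla}T_{\rho}1_{\Omega}(x)}$, and therefore $S(\langle v,N\rangle)(x)=\tfrac{1}{\rho}\langle v,N(x)\rangle\vnorm{\overline{\nabla}T_{\rho}1_{\Omega}(x)}$, as claimed.

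The only genuinely substantive input is \eqref{nabeq2} --- the correct sign of $\overline{\nabla}T_{\rho}1_{\Omega}$ relative to the outer normal $N$ --- which uses maximality of $\Omega$ via the second-variation argument and the positive semidefiniteness of $G$ from Lemma \ref{gpsd}; but that is already established. Within this lemma the only real care needed is the justification of the divergence theorem on the possibly unbounded set $\Omega$ whose boundary is merely a locally finite union of smooth pieces: I would handle this by truncating to $\Omega\cap B(0,R)$, applying Gauss--Green there, and letting $R\to\infty$, the spherical-cap term vanishing by the Gaussian decay of $K(x,\cdot)$ and its first derivatives, and the singular part of $\partial\Omega$ being negligible as a set of Hausdorff dimension at most $\sdimn-1$. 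I therefore expect that step, rather than any algebra, to be where the proof must be written carefully.
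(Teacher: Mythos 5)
Your proposal is correct and follows essentially the same route as the paper: the paper also differentiates $T_{\rho}1_{\Omega}$ under the integral, converts the integrand into a divergence in $y$, applies the divergence theorem (justified by the same truncation to $\Omega\cap B(0,r)$ and Gaussian decay, see Remark \ref{drk}) to obtain $\langle v,\overline{\nabla}T_{\rho}1_{\Omega}(x)\rangle=-\rho\,S(\langle v,N\rangle)(x)$, and then invokes \eqref{nabeq2} for the sign. Your reformulation via the kernel identity $\overline{\nabla}_{x}K=-\rho\,\overline{\nabla}_{y}K$ is only a cosmetic repackaging of the same computation.
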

\begin{proof}
Since $T_{\rho}1_{\Omega}(x)$ is constant for all $x\in\partial\Omega$ by Lemma \ref{firstvarmaxns}, $\overline{\nabla} T_{\rho}1_{\Omega}(x)$ is parallel to $N(x)$ for all $x\in\partial\Omega$.  That is \eqref{nabeq2} holds, i.e.
\begin{equation}\label{firstve}
\overline{\nabla} T_{\rho}1_{\Omega}(x)=-N(x)\vnorm{\overline{\nabla} T_{\rho}1_{\Omega}(x)},\qquad\forall\,x\in\Sigma.
\end{equation}
From Definition \ref{oudef}, and then using the divergence theorem,
\begin{equation}\label{gre}
\begin{aligned}
\langle v,\overline{\nabla} T_{\rho}1_{\Omega}(x)\rangle
&=(1-\rho^{2})^{-(\adimn)/2}(2\pi)^{-(\adimn)/2}\Big\langle v,\int_{\Omega} \overline{\nabla}_{x}e^{-\frac{\vnorm{y-\rho x}^{2}}{2(1-\rho^{2})}}\,\d y\Big\rangle\\
&=(1-\rho^{2})^{-(\adimn)/2}(2\pi)^{-(\adimn)/2}\frac{\rho}{1-\rho^{2}}\int_{\Omega} \langle v,\,y-\rho x\rangle e^{-\frac{\vnorm{y-\rho x}^{2}}{2(1-\rho^{2})}}\,\d y\\
&=-(1-\rho^{2})^{-(\adimn)/2}(2\pi)^{-(\adimn)/2}\rho\int_{\Omega} \mathrm{div}_{y}\Big(ve^{-\frac{\vnorm{y-\rho x}^{2}}{2(1-\rho^{2})}}\Big)\,\d y\\
&=-(1-\rho^{2})^{-(\adimn)/2}(2\pi)^{-(\adimn)/2}\rho\int_{\Sigma}\langle v,N(y)\rangle e^{-\frac{\vnorm{y-\rho x}^{2}}{2(1-\rho^{2})}}\,\d y\\
&\stackrel{\eqref{sdef}}{=}-\rho\, S(\langle v,N\rangle)(x).
\end{aligned}
\end{equation}
Therefore,
$$
\langle v,N(x)\rangle\vnorm{\overline{\nabla} T_{\rho}1_{\Omega}(x)}
\stackrel{\eqref{firstve}}{=}-\langle v,\overline{\nabla} T_{\rho}1_{\Omega}(x)\rangle\\
\stackrel{\eqref{gre}}{=}\rho\, S(\langle v,N\rangle)(x).
$$
\end{proof}
\begin{remark}\label{drk}
To justify the use of the divergence theorem in \eqref{gre}, let $r>0$ and note that we can differentiate under the integral sign of  $T_{\rho}1_{\Omega\cap B(0,r)}(x)$ to get
\begin{equation}\label{grep}
\begin{aligned}
\overline{\nabla} T_{\rho}1_{\Omega\cap B(0,r)}(x)
&=(1-\rho^{2})^{-(\adimn)/2}(2\pi)^{-(\adimn)/2}\Big\langle v,\int_{\Omega\cap B(0,r)} \overline{\nabla}_{x}e^{-\frac{\vnorm{y-\rho x}^{2}}{2(1-\rho^{2})}}\,\d y\Big\rangle\\
&=(1-\rho^{2})^{-(\adimn)/2}(2\pi)^{-(\adimn)/2}\frac{\rho}{1-\rho^{2}}\int_{\Omega\cap B(0,r)} \langle v,\,y-\rho x\rangle e^{-\frac{\vnorm{y-\rho x}^{2}}{2(1-\rho^{2})}}\,\d y\\
&=-(1-\rho^{2})^{-(\adimn)/2}(2\pi)^{-(\adimn)/2}\rho\int_{\Omega\cap B(0,r)} \mathrm{div}_{y}\Big(ve^{-\frac{\vnorm{y-\rho x}^{2}}{2(1-\rho^{2})}}\Big)\,\d y\\
&=-(1-\rho^{2})^{-(\adimn)/2}(2\pi)^{-(\adimn)/2}\rho\int_{(\Sigma\cap B(0,r))\cup(\Omega\cap\partial B(0,r))}\langle v,N(y)\rangle e^{-\frac{\vnorm{y-\rho x}^{2}}{2(1-\rho^{2})}}\,\d y.
\end{aligned}
\end{equation}
Fix $r'>0$.  Fix $x\in\R^{\adimn}$ with $\vnorm{x}<r'$.  The last integral in \eqref{grep} over $\Omega\cap \partial B(0,r)$ goes to zero as $r\to\infty$ uniformly over all such $\vnorm{x}<r'$.  Also
$\overline{\nabla} T_{\rho}1_{\Omega}(x)$
exists a priori for all $x\in\R^{\adimn}$, while
\begin{flalign*}
&\vnorm{\overline{\nabla} T_{\rho}1_{\Omega}(x)-\overline{\nabla} T_{\rho}1_{\Omega\cap B(0,r)}(x)}
\stackrel{\eqref{oudef}}{=}\frac{\rho}{\sqrt{1-\rho^{2}}}\vnorm{\int_{\R^{\adimn}} y 1_{\Omega\cap B(0,r)^{c}}(x\rho+y\sqrt{1-\rho^{2}})\gamma_{\adimn}(y)\,\d y}\\
&\qquad\qquad\qquad
\leq\frac{\rho}{\sqrt{1-\rho^{2}}}\sup_{w\in\R^{\adimn}\colon\vnorm{w}=1}\int_{\R^{\adimn}} \abs{\langle w,y\rangle} 1_{B(0,r)^{c}}(x\rho+y\sqrt{1-\rho^{2}})\gamma_{\adimn}(y)\,\d y.
\end{flalign*}
And the last integral goes to zero as $r\to\infty$, uniformly over all $\vnorm{x}<r'$.
\end{remark}

\begin{lemma}[\embolden{Second Variation of Translations}]
Let $v\in\R^{\adimn}$.  Let $\Omega,\Omega^{c}$ maximize Problem \ref{prob2} for $m=2$.  Let $\{\Omega^{(s)}\}_{s\in(-1,1)}$ be the variation of $\Omega$ corresponding to the constant vector field $X\colonequals v$.  Assume that
$$\int_{\Sigma}\langle v,N(x)\rangle \gamma_{\adimn}(x)\,\d x=0.$$
Then
$$
\frac{1}{2}\frac{\d^{2}}{\d s^{2}}\Big|_{s=0}\int_{\R^{\adimn}}1_{\Omega^{(s)}}(x)T_{\rho}1_{\Omega^{(s)}}(x)\gamma_{\adimn}(x)\,\d x\\
=\Big(\frac{1}{\rho}-1\Big)\int_{\Sigma}\vnormf{\overline{\nabla}T_{\rho}1_{\Omega}(x)}\langle v,N(x)\rangle^{2}\gamma_{\adimn}(x)\,\d x.
$$
\end{lemma}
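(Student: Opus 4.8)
The strategy is to recognize that, once the first variation has pinned down the structure of $\Sigma$, the second variation of the noise stability along this (Gaussian-volume-preserving) direction is governed entirely by the operator $S$ of \eqref{sdef}, for which $\langle v,N\rangle$ is an ``almost eigenfunction'' by Lemma \ref{treig}. Concretely, the proof assembles three facts already in hand: the volume-preserving second variation formula \eqref{four32p}, the elementary identity $\int_\Sigma G(x,y)f(y)\,\d y=\gamma_{\adimn}(x)S(f)(x)$, and Lemma \ref{treig}.

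First I would set $f(x):=\langle v,N(x)\rangle$ for $x\in\Sigma$; this is the normal trace on $\Sigma$ of the constant field $X=v$, and the hypothesis $\int_\Sigma f(x)\gamma_{\adimn}(x)\,\d x=0$ says exactly that this boundary data is Gaussian-volume-preserving. Hence, by Lemma \ref{lemma7p} (using that $T_\rho 1_\Omega$ is constant on $\Sigma$ by Lemma \ref{firstvarmaxns} and the orientation \eqref{nabeq2} fixed by maximality), for the variation whose normal trace on $\Sigma$ equals $f$ one has
\[
\frac{1}{2}\frac{\d^2}{\d s^2}\Big|_{s=0}\int_{\R^{\adimn}}1_{\Omega^{(s)}}(x)T_\rho 1_{\Omega^{(s)}}(x)\gamma_{\adimn}(x)\,\d x=\int_\Sigma\int_\Sigma G(x,y)f(x)f(y)\,\d x\,\d y-\int_\Sigma\vnorm{\overline{\nabla}T_\rho 1_\Omega(x)}f(x)^2\gamma_{\adimn}(x)\,\d x.
\]
Since the field $v$ is not compactly supported, I would first run this with the truncations $\chi_r v$ (with $\chi_r\equiv 1$ on $B(0,r)$) and let $r\to\infty$ exactly as in Remark \ref{drk}: the Gaussian factor in $G$ forces every contribution coming from $\overline{\nabla}\chi_r$ or from $\Sigma\setminus B(0,r)$ to vanish in the limit, while the normal trace on $\Sigma\cap B(0,r)$ stays equal to $f$.

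Next I would rewrite the boundary double integral. Reading $G$ in the third form of \eqref{gdef}, $G(x,y)=(1-\rho^2)^{-(\adimn)/2}(2\pi)^{-(\adimn)/2}\gamma_{\adimn}(x)e^{-\frac{\vnorm{y-\rho x}^{2}}{2(1-\rho^{2})}}$, comparison with the definition \eqref{sdef} of $S$ gives $\int_\Sigma G(x,y)f(y)\,\d y=\gamma_{\adimn}(x)S(f)(x)$ for every $x\in\Sigma$, hence $\int_\Sigma\int_\Sigma G(x,y)f(x)f(y)\,\d x\,\d y=\int_\Sigma S(f)(x)f(x)\gamma_{\adimn}(x)\,\d x$. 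Now Lemma \ref{treig}, applied with this same $v$, gives $S(f)(x)=S(\langle v,N\rangle)(x)=\tfrac{1}{\rho}\langle v,N(x)\rangle\vnorm{\overline{\nabla}T_\rho 1_\Omega(x)}=\tfrac{1}{\rho}f(x)\vnorm{\overline{\nabla}T_\rho 1_\Omega(x)}$, so the double integral equals $\tfrac{1}{\rho}\int_\Sigma\vnorm{\overline{\nabla}T_\rho 1_\Omega(x)}f(x)^2\gamma_{\adimn}(x)\,\d x$. Substituting this into the displayed identity collapses the right-hand side to $\bigl(\tfrac{1}{\rho}-1\bigr)\int_\Sigma\vnorm{\overline{\nabla}T_\rho 1_\Omega(x)}\langle v,N(x)\rangle^2\gamma_{\adimn}(x)\,\d x$, which is the claim.

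Everything after the first step is algebra; the one place that requires care is that first step, namely legitimately feeding the non-compactly-supported translation field into the volume-preserving second-variation machinery (Lemmas \ref{lemma27} and \ref{lemma7p}). This is handled by the truncation-and-limit argument of Remark \ref{drk}, which works precisely because $\Omega$ is a maximizer: $T_\rho 1_\Omega$ is then constant on $\Sigma$ and the orientation in \eqref{nabeq2} is fixed, so the only surviving terms are the two shown, and the Gaussian decay kills all boundary contributions at infinity.
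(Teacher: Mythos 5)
Your proposal is correct and follows essentially the same route as the paper: apply the volume-preserving second variation formula of Lemma \ref{lemma7p} to $f=\langle v,N\rangle$, rewrite the boundary double integral as $\int_{\Sigma}S(f)f\,\gamma_{\adimn}\,\d x$, and then invoke the almost-eigenfunction identity of Lemma \ref{treig} to collapse everything to the stated factor $\frac{1}{\rho}-1$. Your extra care about truncating the non-compactly-supported field $v$ (and the paper's closing a priori finiteness check via the divergence theorem) are the only cosmetic differences.
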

\begin{proof}
Let $f(x)\colonequals\langle v,N(x)\rangle$ for all $x\in\Sigma$.  From Lemma \ref{lemma7p},
\begin{flalign*}
&\frac{1}{2}\frac{\d^{2}}{\d s^{2}}\Big|_{s=0}\int_{\R^{\adimn}}1_{\Omega^{(s)}}(x)T_{\rho}1_{\Omega^{(s)}}(x)\gamma_{\adimn}(x)\,\d x\\
&\qquad\qquad\qquad=\int_{\Sigma} \Big(S(f)(x)-\vnormf{\overline{\nabla}T_{\rho}1_{\Omega}(x)} f(x)\Big)f(x)\gamma_{\adimn}(x)\,\d x.
\end{flalign*}
Applying Lemma \ref{treig}, $S(f)(x)=f(x)\frac{1}{\rho}\vnormf{\overline{\nabla}T_{\rho}1_{\Omega}(x)}$ $\forall$ $x\in\Sigma$, proving the Lemma.  Note also that $\int_{\Sigma}\vnormf{\overline{\nabla}T_{\rho}1_{\Omega}(x)}\langle v,N(x)\rangle^{2}\gamma_{\adimn}(x)\,\d x$ is finite priori by the divergence theorem and \eqref{nabeq2}:
\begin{flalign*}
\infty&>\abs{\int_{\Omega}\Big\langle v,-x+\nabla\langle v,\overline{\nabla}T_{\rho}1_{\Omega}(x)\rangle\Big\rangle\gamma_{\adimn}(x)\,\d x}
=\abs{\int_{\Omega}\mathrm{div}\Big(v\langle v,\overline{\nabla}T_{\rho}1_{\Omega}(x)\rangle\gamma_{\adimn}(x)\Big)\,\d x}\\
&=\abs{\int_{\Sigma}\langle v,N(x)\rangle\langle v,\overline{\nabla}T_{\rho}1_{\Omega}(x)\rangle\gamma_{\adimn}(x)\,\d x}
\stackrel{\eqref{nabeq2}}{=}\abs{\int_{\Sigma}\vnormf{\overline{\nabla}T_{\rho}1_{\Omega}(x)}\langle v,N(x)\rangle^{2}\gamma_{\adimn}(x)\,\d x}.
\end{flalign*}
\end{proof}

\subsection{More than Two Sets}

Let $v\in\R^{\adimn}$ and denote $f_{ij}\colonequals\langle v,N_{ij}\rangle$ for all $1\leq i,j\leq m$.  For simplicity of notation in the formulas below, if $1\leq i\leq m$ and if a vector $N(x)$ appears inside an integral over $\partial\Omega_{i}$, then $N(x)$ denotes the unit exterior pointing normal vector to $\Omega_{i}$ at $x\in\redb\Omega_{i}$.  Similarly, for simplicity of notation, we denote $\langle v,N\rangle$ as the collection of functions $(\langle v,N_{ij}\rangle)_{1\leq i<j\leq m}$.  For any $1\leq i<j\leq m$, define
\begin{equation}\label{sdef2}
S_{ij}(\langle v,N\rangle)(x)\colonequals (1-\rho^{2})^{-(\adimn)/2}(2\pi)^{-(\adimn)/2}\Big(\int_{\partial\Omega_{i}}-\int_{\partial\Omega_{j}}\Big)\langle v,N(y)\rangle e^{-\frac{\vnorm{y-\rho x}^{2}}{2(1-\rho^{2})}}\,\d y,\,\forall\,x\in\Sigma_{ij}.
\end{equation}
\begin{lemma}[\embolden{Key Lemma, $m\geq 2$, Translations as Almost Eigenfunctions}]\label{treig2}
Let $\Omega_{1},\ldots,\Omega_{m}$ maximize problem \ref{prob2}.  Fix $1\leq i<j\leq m$.  Let $v\in\R^{\adimn}$.  Then
$$S_{ij}(\langle v,N\rangle)(x)=\langle v,N_{ij}(x)\rangle\frac{1}{\rho}\vnorm{\overline{\nabla} T_{\rho}(1_{\Omega_{i}}-1_{\Omega_{j}})(x)},\qquad\forall\,x\in\Sigma_{ij}.$$
\end{lemma}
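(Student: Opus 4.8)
\section*{Proof proposal}

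The plan is to imitate the proof of Lemma \ref{treig} verbatim, replacing the indicator $1_{\Omega}$ by the difference $1_{\Omega_{i}}-1_{\Omega_{j}}$, and being careful about which boundary components appear when the divergence theorem is applied on the (unbounded) sets $\Omega_{i}$ and $\Omega_{j}$. First I would record the two facts about maximizers that do the work. By Lemma \ref{firstvarmaxns}, $T_{\rho}(1_{\Omega_{i}}-1_{\Omega_{j}})$ is constant on $\Sigma_{ij}$, so $\overline{\nabla}T_{\rho}(1_{\Omega_{i}}-1_{\Omega_{j}})(x)$ is parallel to $N_{ij}(x)$ there, and by \eqref{nabeq3} of Lemma \ref{lemma7r} the sign is negative:
\[
\overline{\nabla}T_{\rho}(1_{\Omega_{i}}-1_{\Omega_{j}})(x)=-N_{ij}(x)\,\vnorm{\overline{\nabla}T_{\rho}(1_{\Omega_{i}}-1_{\Omega_{j}})(x)},\qquad\forall\,x\in\Sigma_{ij}.
\]
Hence it suffices to prove that $\langle v,\overline{\nabla}T_{\rho}(1_{\Omega_{i}}-1_{\Omega_{j}})(x)\rangle=-\rho\,S_{ij}(\langle v,N\rangle)(x)$ for all $x\in\Sigma_{ij}$, since pairing the displayed identity with $v$ and dividing by $\rho$ then gives the claim.

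Next I would compute $\langle v,\overline{\nabla}T_{\rho}1_{\Omega_{k}}(x)\rangle$ for $k\in\{i,j\}$ exactly as in \eqref{gre}: differentiate \eqref{oudef} under the integral sign in $x$, use the identity $\langle v,\overline{\nabla}_{x}e^{-\frac{\vnorm{y-\rho x}^{2}}{2(1-\rho^{2})}}\rangle=-\rho\,\mathrm{div}_{y}\big(v\,e^{-\frac{\vnorm{y-\rho x}^{2}}{2(1-\rho^{2})}}\big)$ (valid because $v$ is constant), and apply the divergence theorem on $\Omega_{k}$. By Lemma \ref{reglem} and the discussion around \eqref{zero11}, $\partial\Omega_{k}$ agrees with $\bigcup_{\ell\neq k}\Sigma_{k\ell}$ up to a set of Hausdorff dimension at most $\sdimn-1$, which does not contribute to the surface integral, so the divergence theorem yields
\[
\langle v,\overline{\nabla}T_{\rho}1_{\Omega_{k}}(x)\rangle=-\rho\,(1-\rho^{2})^{-(\adimn)/2}(2\pi)^{-(\adimn)/2}\int_{\partial\Omega_{k}}\langle v,N(y)\rangle e^{-\frac{\vnorm{y-\rho x}^{2}}{2(1-\rho^{2})}}\,\d y,
\]
with $N$ the exterior unit normal to $\Omega_{k}$. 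Subtracting the $k=j$ identity from the $k=i$ identity and comparing with the definition \eqref{sdef2} of $S_{ij}$ gives precisely $\langle v,\overline{\nabla}T_{\rho}(1_{\Omega_{i}}-1_{\Omega_{j}})(x)\rangle=-\rho\,S_{ij}(\langle v,N\rangle)(x)$, and combined with the gradient identity above,
\[
\langle v,N_{ij}(x)\rangle\vnorm{\overline{\nabla}T_{\rho}(1_{\Omega_{i}}-1_{\Omega_{j}})(x)}=-\langle v,\overline{\nabla}T_{\rho}(1_{\Omega_{i}}-1_{\Omega_{j}})(x)\rangle=\rho\,S_{ij}(\langle v,N\rangle)(x),
\]
which is the assertion of the lemma after dividing by $\rho$.

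The main obstacle, as in Lemma \ref{treig}, is justifying the divergence theorem on the unbounded domains $\Omega_{i},\Omega_{j}$; I would dispatch it exactly as in Remark \ref{drk}. Namely, first apply the divergence theorem on $\Omega_{k}\cap B(0,r)$, which produces an extra boundary term over $\Omega_{k}\cap\partial B(0,r)$; this term tends to $0$ as $r\to\infty$, uniformly for $x$ in a fixed ball, by the Gaussian decay of $e^{-\vnorm{y-\rho x}^{2}/(2(1-\rho^{2}))}$ in $y$; meanwhile $\overline{\nabla}T_{\rho}1_{\Omega_{k}}(x)$ exists a priori and $\overline{\nabla}T_{\rho}1_{\Omega_{k}\cap B(0,r)}(x)\to\overline{\nabla}T_{\rho}1_{\Omega_{k}}(x)$ (uniformly on bounded sets) by dominated convergence, exactly as in the two displays following \eqref{grep}. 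A secondary routine point is confirming that the lower-dimensional singular portion $(\partial\Omega_{i}\cap\partial\Omega_{j})\setminus\Sigma_{ij}$ of $\partial\Omega_{k}$ carries no $\sdimn$-dimensional surface measure, which is immediate from Lemma \ref{reglem}. No new ideas beyond the $m=2$ argument are needed; the only structural input particular to $m>2$ is that $\partial\Omega_{k}$ is the union of the interfaces $\Sigma_{k\ell}$, which is what makes the difference $\int_{\partial\Omega_{i}}-\int_{\partial\Omega_{j}}$ in \eqref{sdef2} match the gradient computation.
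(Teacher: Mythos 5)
Your proposal is correct and follows essentially the same route as the paper: invoke the sign identity \eqref{nabeq3} from Lemma \ref{lemma7r}, compute $\langle v,\overline{\nabla}T_{\rho}1_{\Omega_{k}}(x)\rangle$ for $k\in\{i,j\}$ by converting the $x$-gradient into a $y$-divergence and applying the divergence theorem (justified as in Remark \ref{drk}), then subtract and match with \eqref{sdef2}. No substantive differences from the paper's argument.
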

\begin{proof}
From Lemma \ref{lemma7r}, i.e. \eqref{nabeq3},
\begin{equation}\label{firstve2}
\overline{\nabla} T_{\rho}(1_{\Omega_{i}}-1_{\Omega_{j}})(x)=-N_{ij}(x)\vnorm{\overline{\nabla} T_{\rho}(1_{\Omega_{i}}-1_{\Omega_{j}})(x)},\qquad\forall\,x\in\Sigma_{ij}.
\end{equation}
From Definition \ref{oudef}, and then using the divergence theorem,
\begin{equation}\label{gre2}
\begin{aligned}
\langle v,\overline{\nabla} T_{\rho}1_{\Omega_{i}}(x)\rangle
&=(1-\rho^{2})^{-(\adimn)/2}(2\pi)^{-(\adimn)/2}\Big\langle v,\int_{\Omega_{i}} \overline{\nabla}e^{-\frac{\vnorm{y-\rho x}^{2}}{2(1-\rho^{2})}}\,\d y\Big\rangle\\
&=(1-\rho^{2})^{-(\adimn)/2}(2\pi)^{-(\adimn)/2}\frac{\rho}{1-\rho^{2}}\int_{\Omega_{i}} \langle v,\,y-\rho x\rangle e^{-\frac{\vnorm{y-\rho x}^{2}}{2(1-\rho^{2})}}\,\d y\\
&=-(1-\rho^{2})^{-(\adimn)/2}(2\pi)^{-(\adimn)/2})\rho\int_{\Omega_{i}} \mathrm{div}\Big(ve^{-\frac{\vnorm{y-\rho x}^{2}}{2(1-\rho^{2})}}\Big)\,\d y\\
&=-(1-\rho^{2})^{-(\adimn)/2}(2\pi)^{-(\adimn)/2}\rho\int_{\redb\Omega_{i}}\langle v,N(y)\rangle e^{-\frac{\vnorm{y-\rho x}^{2}}{2(1-\rho^{2})}}\,\d y.
\end{aligned}
\end{equation}
The use of the divergence theorem is justified in Remark \ref{drk}.  Therefore,
\begin{flalign*}
&\langle v,N_{ij}(x)\rangle\vnorm{\overline{\nabla} T_{\rho}(1_{\Omega_{i}}-1_{\Omega_{j}})(x)}
\stackrel{\eqref{firstve2}}{=}-\langle v,\overline{\nabla} T_{\rho}(1_{\Omega_{i}}-1_{\Omega_{j}})(x)\rangle\\
&\qquad\stackrel{\eqref{gre2}}{=}(1-\rho^{2})^{-(\adimn)/2}(2\pi)^{-(\adimn)/2}\rho\Big(\int_{\redb\Omega_{i}}-\int_{\redb\Omega_{j}}\Big)\langle v,N(y)\rangle e^{-\frac{\vnorm{y-\rho x}^{2}}{2(1-\rho^{2})}}\,\d y\\
&\qquad\stackrel{\eqref{sdef2}}{=}\rho\, S_{ij}(\langle v,N\rangle)(x).
\end{flalign*}
\end{proof}

\begin{lemma}[\embolden{Second Variation of Translations, Multiple Sets}]\label{keylem}
Let $v\in\R^{\adimn}$.  Let $\Omega_{1},\ldots,\Omega_{m}$ maximize problem \ref{prob2}.  For each $1\leq i\leq m$, let $\{\Omega_{i}^{(s)}\}_{s\in(-1,1)}$ be the variation of $\Omega_{i}$ corresponding to the constant vector field $X\colonequals v$.  Assume that
$$\int_{\partial\Omega_{i}}\langle v,N(x)\rangle \gamma_{\adimn}(x)\,\d x=0,\qquad\forall\,1\leq i\leq m.$$
Then
\begin{flalign*}
&\frac{1}{2}\frac{\d^{2}}{\d s^{2}}\Big|_{s=0}\sum_{i=1}^{m}\int_{\R^{\adimn}}1_{\Omega_{i}^{(s)}}(x)T_{\rho}1_{\Omega_{i}^{(s)}}(x)\gamma_{\adimn}(x)\,\d x\\
&\qquad\qquad\qquad=\Big(\frac{1}{\rho}-1\Big)\sum_{1\leq i<j\leq m}\int_{\Sigma_{ij}}\vnormf{\overline{\nabla}T_{\rho}(1_{\Omega_{i}}-1_{\Omega_{j}})(x)}\langle v,N_{ij}(x)\rangle^{2}\gamma_{\adimn}(x)\,\d x.
\end{flalign*}
\end{lemma}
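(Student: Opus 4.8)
The plan is to follow the proof of the two–set ``Second Variation of Translations'' lemma verbatim, substituting its two ingredients — the volume–preserving second variation of a maximizer and the ``almost eigenfunction'' identity — by their multi–set counterparts, Lemma~\ref{lemma7r} and Lemma~\ref{treig2}.

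First I would record that, writing $f_{ij}(x)\colonequals\langle v,N_{ij}(x)\rangle$ on $\Sigma_{ij}$ and using that $\partial\Omega_i$ is, up to a set of measure zero, the disjoint union of the interfaces $\Sigma_{ij}$, $j\neq i$, with $N=N_{ij}$ on $\Sigma_{ij}$, the standing hypothesis $\int_{\partial\Omega_i}\langle v,N(x)\rangle\gamma_{\adimn}(x)\,\d x=0$ for all $i$ is precisely the balance condition \eqref{eight2}. Hence by the extension Lemma~\ref{lemma27} the restriction $v|_{\cup_{i<j}\Sigma_{ij}}$ extends to some $X\in C_0^\infty(\R^{\adimn},\R^{\adimn})$ whose associated variations preserve each $\gamma_{\adimn}(\Omega_i^{(s)})$; this is exactly what allows Lemma~\ref{lemma7r} to be applied (therein the $\mathrm{div}(X)-\langle X,x\rangle$ contribution is discarded by the combinatorial Lemma~\ref{lemma25.3} precisely because of this second–order volume preservation). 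Since the right side of \eqref{four32pv2} sees $X$ only through the normal traces $f_{ij}=\langle v,N_{ij}\rangle$ on the interfaces, working with this extension and then rewriting the answer yields the same expression as for the literal constant field $v$; the non–compact support of $v$ is absorbed by a truncation as in Remark~\ref{drk}.

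With Lemma~\ref{lemma7r} in hand, $\tfrac12\tfrac{\d^2}{\d s^2}\big|_{s=0}\sum_i\int_{\R^{\adimn}}1_{\Omega_i^{(s)}}T_\rho 1_{\Omega_i^{(s)}}\gamma_{\adimn}\,\d x$ (which equals the double–integral quantity in \eqref{four32pv2}, cf.\ the identity recorded just after \eqref{gdef}) is the sum of the ``interface term'' $-\sum_{i<j}\int_{\Sigma_{ij}}\vnormf{\overline{\nabla}T_\rho(1_{\Omega_i}-1_{\Omega_j})(x)}f_{ij}(x)^2\gamma_{\adimn}(x)\,\d x$ and the ``Mehler term'' $\sum_{i<j}\int_{\Sigma_{ij}}\big[(\int_{\redb\Omega_i}-\int_{\redb\Omega_j})G(x,y)\langle v,N(y)\rangle\,\d y\big]\,f_{ij}(x)\,\d x$. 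In the Mehler term I would factor $\gamma_{\adimn}(x)$ out of $G$ using the third display of \eqref{gdef}, so that, by definition \eqref{sdef2} (and $\int_{\redb\Omega_k}=\int_{\partial\Omega_k}$ up to null sets),
\[
\Big(\int_{\redb\Omega_i}-\int_{\redb\Omega_j}\Big)G(x,y)\langle v,N(y)\rangle\,\d y=\gamma_{\adimn}(x)\,S_{ij}(\langle v,N\rangle)(x),\qquad x\in\Sigma_{ij},
\]
and then invoke the key Lemma~\ref{treig2}, $S_{ij}(\langle v,N\rangle)(x)=\langle v,N_{ij}(x)\rangle\,\tfrac1\rho\,\vnormf{\overline{\nabla}T_\rho(1_{\Omega_i}-1_{\Omega_j})(x)}$ on $\Sigma_{ij}$. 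This turns the Mehler term into $\tfrac1\rho\sum_{i<j}\int_{\Sigma_{ij}}\vnormf{\overline{\nabla}T_\rho(1_{\Omega_i}-1_{\Omega_j})(x)}\langle v,N_{ij}(x)\rangle^2\gamma_{\adimn}(x)\,\d x$; adding the interface term, which is the negative of the same sum with the $1/\rho$ removed, produces $\big(\tfrac1\rho-1\big)\sum_{i<j}\int_{\Sigma_{ij}}\vnormf{\overline{\nabla}T_\rho(1_{\Omega_i}-1_{\Omega_j})(x)}\langle v,N_{ij}(x)\rangle^2\gamma_{\adimn}(x)\,\d x$, as claimed.

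The algebra above is a routine transcription of the $m=2$ case once Lemmas~\ref{lemma7r} and \ref{treig2} are available — the genuinely new combinatorial input over $m=2$, namely discarding the $\mathrm{div}(X)-\langle X,x\rangle$ term via Lemma~\ref{lemma25.3}, is already packaged inside Lemma~\ref{lemma7r}. I therefore expect the main obstacle to be analytic bookkeeping: justifying the truncation of the constant field $v$ and the differentiation under the integral sign defining $\overline{\nabla}T_\rho(1_{\Omega_i}-1_{\Omega_j})$ uniformly on bounded sets (Remark~\ref{drk}), and checking a priori that each $\int_{\Sigma_{ij}}\vnormf{\overline{\nabla}T_\rho(1_{\Omega_i}-1_{\Omega_j})(x)}\langle v,N_{ij}(x)\rangle^2\gamma_{\adimn}(x)\,\d x$ is finite. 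For the latter, \eqref{nabeq3} rewrites the (nonnegative) integrand as $-\langle v,\overline{\nabla}T_\rho(1_{\Omega_i}-1_{\Omega_j})(x)\rangle\langle v,N_{ij}(x)\rangle$; expanding $T_\rho(1_{\Omega_i}-1_{\Omega_j})=T_\rho 1_{\Omega_i}-T_\rho 1_{\Omega_j}$, regrouping the sum over interfaces into boundary integrals over the individual $\partial\Omega_k$, and applying the divergence theorem to $x\mapsto v\,\langle v,\overline{\nabla}T_\rho 1_{\Omega_k}(x)\rangle\gamma_{\adimn}(x)$ — whose divergence is integrable because $T_\rho 1_{\Omega_k}$ is smooth with bounded derivatives and $\gamma_{\adimn}$ supplies Gaussian decay — bounds the total, hence each summand, exactly as at the end of the $m=2$ argument.
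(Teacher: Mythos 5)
Your proposal is correct and follows essentially the same route as the paper's proof: apply the volume-preserving second variation formula of Lemma \ref{lemma7r} (with the $\mathrm{div}(X)-\langle X,x\rangle$ term discarded via Lemma \ref{lemma25.3}), identify the Mehler term with $S_{ij}(\langle v,N\rangle)$, substitute the almost-eigenfunction identity of Lemma \ref{treig2}, and verify finiteness by the divergence theorem. Your explicit appeal to Lemma \ref{lemma27} for the volume-preserving extension and to Remark \ref{drk} for truncating the constant field is a slightly more careful bookkeeping of steps the paper leaves implicit, but the argument is the same.
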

\begin{proof}
For any $1\leq i<j\leq m$, let $f_{ij}(x)\colonequals\langle v,N_{ij}(x)\rangle$ for all $x\in\Sigma$.  From Lemma \ref{lemma7p},
\begin{flalign*}
&\frac{1}{2}\frac{\d^{2}}{\d s^{2}}\Big|_{s=0}\sum_{i=1}^{m}\int_{\R^{\adimn}}1_{\Omega^{(s)}}(x)T_{\rho}1_{\Omega^{(s)}}(x)\gamma_{\adimn}(x)\,\d x\\
&\qquad=\sum_{1\leq i<j\leq m}\int_{\Sigma_{ij}} \Big(S_{ij}(\langle v,N\rangle)(x)-\vnormf{\overline{\nabla}T_{\rho}(1_{\Omega_{i}}-1_{\Omega_{j}})(x)} f_{ij}(x)\Big)f_{ij}(x)\gamma_{\adimn}(x)\,\d x.
\end{flalign*}
Applying Lemma \ref{treig2}, $S_{ij}(\langle v,N\rangle)(x)=f_{ij}(x)\frac{1}{\rho}\vnormf{\overline{\nabla}T_{\rho}(1_{\Omega_{i}}-1_{\Omega_{j}})(x)}$, proving the Lemma.  Note also that $\sum_{1\leq i<j\leq m}\int_{\Sigma_{ij}}\vnormf{\overline{\nabla}T_{\rho}(1_{\Omega_{i}}-1_{\Omega_{j}})(x)}\langle v,N_{ij}(x)\rangle^{2}\gamma_{\adimn}(x)\,\d x$ is finite priori by the divergence theorem since
\begin{flalign*}
\infty&>\abs{\int_{\Omega_{i}}\Big\langle v,-x+\nabla\langle v,\overline{\nabla}T_{\rho}1_{\Omega_{i}}(x)\rangle\Big\rangle\gamma_{\adimn}(x)\,\d x}
=\abs{\int_{\Omega_{i}}\mathrm{div}\Big(v\langle v,\overline{\nabla}T_{\rho}1_{\Omega_{i}}(x)\rangle\gamma_{\adimn}(x)\Big)\,\d x}\\
&=\abs{\int_{\Omega_{i}}\mathrm{div}\Big(v\langle v,\overline{\nabla}T_{\rho}1_{\Omega_{i}}(x)\rangle\gamma_{\adimn}(x)\Big)\,\d x}
=\abs{\int_{\redb\Omega_{i}}\langle v,\overline{\nabla}T_{\rho}(1_{\Omega_{i}})(x)\rangle\langle v,N(x)\rangle\gamma_{\adimn}(x)\,\d x}.
\end{flalign*}
Summing over $1\leq i\leq m$ then gives
\begin{flalign*}
\infty>&\abs{\sum_{1\leq i<j\leq m}\int_{\Sigma_{ij}}\langle v,\overline{\nabla}T_{\rho}(1_{\Omega_{i}}-1_{\Omega_{j}})(x)\rangle\langle v,N_{ij}(x)\rangle\gamma_{\adimn}(x)\,\d x.}\\
&\stackrel{\eqref{nabeq3}}{=}\abs{\sum_{1\leq i<j\leq m}\int_{\Sigma_{ij}}\vnormf{\overline{\nabla}T_{\rho}(1_{\Omega_{i}}-1_{\Omega_{j}})(x)}\langle v,N_{ij}(x)\rangle^{2}\gamma_{\adimn}(x)\,\d x.}.
\end{flalign*}
\end{proof}

\section{Proof of the Main Structure Theorem}

\begin{proof}[Proof of Theorem \ref{mainthm1}]
Let $m\geq2$.  Let $0<\rho<1$.    Fix $a_{1},\ldots,a_{m}>0$ such that $\sum_{i=1}^{m}a_{i}=1$.  Let $\Omega_{1},\ldots\Omega_{m}\subset\R^{\adimn}$ be measurable sets that partition $\R^{\adimn}$ such that $\gamma_{\adimn}(\Omega_{i})=a_{i}$ for all $1\leq i\leq m$ that maximize Problem \ref{prob2}.  These sets exist by Lemma \ref{existlem} and from Lemma \ref{reglem} their boundaries are locally finite unions of $C^{\infty}$ $\sdimn$-dimensional manifolds.  Define $\Sigma_{ij}\colonequals(\redb\Omega_{i})\cap(\redb\Omega_{j})$ for all $1\leq i<j\leq m$.

By Lemma \ref{firstvarmaxns}, for all $1\leq i<j\leq m$, there exists $c_{ij}\in\R$ such that
$$T_{\rho}(1_{\Omega_{i}}-1_{\Omega_{j}})(x)=c_{ij},\qquad\forall\,x\in\Sigma_{ij}.$$
By this condition, the regularity Lemma \ref{reglem}, and the last part of Lemma \ref{lemma7r},
$$\overline{\nabla}T_{\rho}(1_{\Omega_{i}}-1_{\Omega_{j}})(x)= -N_{ij}(x)\vnorm{\overline{\nabla}T_{\rho}(1_{\Omega_{i}}-1_{\Omega_{j}})(x)},\qquad\forall\,x\in\Sigma_{ij}.$$
Moreover, by the last part of Lemma \ref{lemma7r}, except for a set $\sigma_{ij}$ of Hausdorff dimension at most $\sdimn-1$, we have
\begin{equation}\label{nine1}
\vnorm{\overline{\nabla}T_{\rho}(1_{\Omega_{i}}-1_{\Omega_{j}})(x)}>0,\qquad\forall\,x\in\Sigma_{ij}\setminus\sigma_{ij}.
\end{equation}

Fix $v\in\R^{\adimn}$, and consider the variation of $\Omega_{1},\ldots,\Omega_{m}$ induced by the constant vector field $X\colonequals v$.  For all $1\leq i<j\leq m$, define $S_{ij}$ as in \eqref{sdef2}.  Define
$$
V\colonequals\Big\{v\in\R^{\adimn}\colon \sum_{j\in\{1,\ldots,m\}\setminus\{i\}}\int_{\Sigma_{ij}}\langle v,N_{ij}(x)\rangle \gamma_{\adimn}(x)\,\d x=0,\qquad\forall\,1\leq i\leq m\Big\}.
$$
From Lemma \ref{keylem},
\begin{flalign*}
v\in V\,\Longrightarrow&\,\,\,\frac{1}{2}\frac{\d^{2}}{\d s^{2}}\Big|_{s=0}\sum_{i=1}^{m}\int_{\R^{\adimn}}1_{\Omega_{i}^{(s)}}(x)T_{\rho}1_{\Omega_{i}^{(s)}}(x)\gamma_{\adimn}(x)\,\d x\\
&\qquad\qquad=\Big(\frac{1}{\rho}-1\Big)\sum_{1\leq i<j\leq m}\int_{\Sigma_{ij}}\vnormf{\overline{\nabla}T_{\rho}(1_{\Omega_{i}}-1_{\Omega_{j}})(x)}\langle v,N_{ij}(x)\rangle^{2}\gamma_{\adimn}(x)\,\d x.
\end{flalign*}
Since $0<\rho<1$, \eqref{nine1} implies
\begin{equation}\label{nine2}
v\in V\,\Longrightarrow\,\langle v,N_{ij}(x)\rangle=0,\qquad\forall\,x\in\Sigma_{ij},\,\forall\,1\leq i<j\leq m.
\end{equation}
The set $V$ has dimension at least $\sdimn+2-m$, by the rank-nullity theorem, since $V$ is the null space of the linear operator $M\colon \R^{\adimn}\to\R^{m}$ defined by
$$
(M(v))_{i}\colonequals \sum_{j\in\{1,\ldots,m\}\setminus\{i\}}\int_{\Sigma_{ij}}\langle v,N_{ij}(x)\rangle \gamma_{\adimn}(x)\,\d x,\qquad\forall\,1\leq i\leq m
$$ 
and $M$ has rank at most $m-1$ (since $\sum_{i=1}^{m}(M(v))_{i}=0$ for all $v\in\R^{\adimn}$).  So, by \eqref{nine2}, after rotating $\Omega_{1},\ldots,\Omega_{m}$, we conclude that there exist measurable $\Omega_{1}',\ldots,\Omega_{m}'\subset\R^{m-1}$ such that
$$\Omega_{i}=\Omega_{i}'\times\R^{\sdimn+2-m},\qquad\forall\,1\leq i\leq m.$$
\end{proof}

\section{The Case of Negative Correlation}\label{negsec}

In this section, we consider the case that $\rho<0$ in Problem \ref{prob2}.  When $\rho<0$ and $h\colon\R^{\adimn}\to[-1,1]$ is measurable, then quantity
$$\int_{\R^{\adimn}}h(x)T_{\rho}h(x)\gamma_{\adimn}(x)\,\d x$$
could be negative, so a few parts of the above argument do not work, namely the existence Lemma \ref{existlem}.  We therefore replace the noise stability with a more general bilinear expression, guaranteeing existence of the corresponding problem.  The remaining parts of the argument are essentially identical, mutatis mutandis.  We indicate below where the arguments differ in the bilinear case.

When $\rho<0$, we look for a minimum of noise stability, rather than a maximum.  Correspondingly, we expect that the plurality function minimizes noise stability when $\rho<0$.  If $\rho<0$, then \eqref{oudef} implies that
$$\int_{\R^{\adimn}}h(x)T_{\rho}h(x)\gamma_{\adimn}(x)\,\d x
=\int_{\R^{\adimn}}h(x)T_{(-\rho)}h(-x)\gamma_{\adimn}(x)\,\d x.$$
So, in order to understand the minimum of noise stability for negative correlations, it suffices to consider the following bilinear version of the standard simplex problem with positive correlation.

\begin{prob}[\embolden{Standard Simplex Problem, Bilinear Version, Positive Correlation}, {\cite{isaksson11}}]\label{prob2n}
Let $m\geq3$.  Fix $a_{1},\ldots,a_{m}>0$ such that $\sum_{i=1}^{m}a_{i}=1$.  Fix $0<\rho<1$.  Find measurable sets $\Omega_{1},\ldots\Omega_{m},\Omega_{1}',\ldots\Omega_{m}'\subset\R^{\adimn}$ with $\cup_{i=1}^{m}\Omega_{i}=\cup_{i=1}^{m}\Omega_{i}'=\R^{\adimn}$ and $\gamma_{\adimn}(\Omega_{i})=\gamma_{\adimn}(\Omega_{i}')=a_{i}$ for all $1\leq i\leq m$ that minimize
$$\sum_{i=1}^{m}\int_{\R^{\adimn}}1_{\Omega_{i}}(x)T_{\rho}1_{\Omega_{i}'}(x)\gamma_{\adimn}(x)\,\d x,$$
subject to the above constraints.
\end{prob}

\begin{conj}[\embolden{Standard Simplex Conjecture, Bilinear Version, Positive Correlation} {\cite{isaksson11}}]\label{conj2n}
Let $\Omega_{1},\ldots\Omega_{m},\Omega_{1}',\ldots\Omega_{m}'\subset\R^{\adimn}$ minimize Problem \ref{prob2}.  Assume that $m-1\leq\adimn$.  Fix $0<\rho<1$.  Let $z_{1},\ldots,z_{m}\in\R^{\adimn}$ be the vertices of a regular simplex in $\R^{\adimn}$ centered at the origin.  Then $\exists$ $w\in\R^{\adimn}$ such that, for all $1\leq i\leq m$,
$$\Omega_{i}=-\Omega_{i}'=w+\{x\in\R^{\adimn}\colon\langle x,z_{i}\rangle=\max_{1\leq j\leq m}\langle x,z_{j}\rangle\}.$$
\end{conj}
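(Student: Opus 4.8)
The plan is to prove the bilinear analogue of the Main Structure Theorem (Theorem \ref{mainthm1}): after a rotation and a modification on a Lebesgue null set, every minimizer $\Omega_{1},\ldots,\Omega_{m},\Omega_{1}',\ldots,\Omega_{m}'$ of Problem \ref{prob2n} has the product form $\Omega_{i}=A_{i}\times\R^{\sdimn+2-m}$ and $\Omega_{i}'=A_{i}'\times\R^{\sdimn+2-m}$ with $A_{i},A_{i}'\subset\R^{m-1}$ measurable. Granting this, Conjecture \ref{conj2n} is reduced to the base case $\adimn=m-1$; for $m=3$ (hence $\adimn=2$) and $0<\rho<\rho_{0}$ this remaining two-dimensional problem is resolved by the results of \cite{heilman12} (the bilinear form being reached from the noise-stability form via the reflection identity $\int_{\R^{\adimn}}h(x)T_{\rho}h(x)\gamma_{\adimn}(x)\,\d x=\int_{\R^{\adimn}}h(x)T_{-\rho}h(-x)\gamma_{\adimn}(x)\,\d x$), so one obtains Conjecture \ref{conj2n} unconditionally in that range; the corresponding negative-correlation form of Conjecture \ref{conj2} then follows by the same identity. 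For $m\geq4$ the conjecture is only reduced, not closed, since the $(m-1)$-dimensional base case is itself open.

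The bilinear structure theorem is proved by repeating, mutatis mutandis, the argument used for Theorem \ref{mainthm1}. First I would establish existence of a minimizer: although $\sum_{i}\int 1_{\Omega_{i}}T_{\rho}1_{\Omega_{i}'}\gamma_{\adimn}$ is not convex, it is affine separately in the block $\{1_{\Omega_{i}}\}_{i}$ and in the block $\{1_{\Omega_{i}'}\}_{i}$; running the weak-compactness plus extreme-point argument of Lemma \ref{existlem} in each block in turn, a minimizer over the convex relaxation can be taken to consist of two genuine partitions. Next I would upgrade regularity as in Lemma \ref{reglem}. The first variation in the unprimed partition (cf. Lemma \ref{firstvarmaxns}) now shows that $T_{\rho}(1_{\Omega_{i}'}-1_{\Omega_{j}'})$ is constant on the interface $\Sigma_{ij}\colonequals(\redb\Omega_{i})\cap(\redb\Omega_{j})$, and symmetrically $T_{\rho}(1_{\Omega_{i}}-1_{\Omega_{j}})$ is constant on $\Sigma_{ij}'\colonequals(\redb\Omega_{i}')\cap(\redb\Omega_{j}')$; hence each interface is a level set of an Ornstein--Uhlenbeck evolution of the \emph{other} partition, so by strong unique continuation it is a locally finite union of $C^{\infty}$ manifolds, $N_{ij}$ on $\Sigma_{ij}$ is parallel to $\overline{\nabla}T_{\rho}(1_{\Omega_{i}'}-1_{\Omega_{j}'})$, and the sign is fixed (as in \eqref{nabeq3}) by the positive semidefiniteness of $G$.

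The heart of the matter is the coupled second variation under the reflection-respecting translation $\Omega_{i}\mapsto\Omega_{i}+sv$, $\Omega_{i}'\mapsto\Omega_{i}'-sv$, which preserves the relation $\Omega_{i}'=-\Omega_{i}$ expected at the optimum. Repeating Lemmas \ref{lemma6v2}, \ref{lemma7r}, \ref{treig2} and \ref{keylem}, the divergence terms vanish by the constancy of the $T_{\rho}(\cdot)$ on the interfaces together with the combinatorial Lemma \ref{lemma25.3}, and $\langle v,N_{ij}\rangle$ is once more an almost eigenfunction of the pertinent second-variation operator (now bilinearly coupling $\Sigma_{ij}$ and $\Sigma_{ij}'$) with eigenvalue $1/\rho$. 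The crucial point is the sign: translating the two partitions in opposite directions turns the ``eigenvalue larger than $1$'' phenomenon into an obstruction for a minimizer, so the second variation equals a nonpositive multiple of $\sum_{1\leq i<j\leq m}\int_{\Sigma_{ij}}\vnormf{\overline{\nabla}T_{\rho}(1_{\Omega_{i}'}-1_{\Omega_{j}'})(x)}\langle v,N_{ij}(x)\rangle^{2}\gamma_{\adimn}(x)\,\d x$, which must therefore vanish. Since $\vnormf{\overline{\nabla}T_{\rho}(1_{\Omega_{i}'}-1_{\Omega_{j}'})}>0$ off a set of Hausdorff dimension at most $\sdimn-1$, this forces $\langle v,N_{ij}\rangle\equiv0$ on $\Sigma_{ij}$ (and symmetrically on $\Sigma_{ij}'$) for every $v$ in the subspace $V$ of directions satisfying the $m$ volume-preservation constraints. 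By rank--nullity $\dim V\geq\adimn-(m-1)=\sdimn+2-m$, and the asserted product structure follows after a rotation.

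I expect the main obstacle to be the bilinear coupling between the two partitions rather than any individual computation: the normal to $\Sigma_{ij}$ is governed by the primed sets and vice versa, so one must check that the almost-eigenfunction identity of Lemma \ref{treig2} still closes up when the two partitions are a priori unrelated, and must carefully verify the sign bookkeeping that converts the maximizer obstruction of Theorem \ref{mainthm1} into a minimizer obstruction here. Establishing existence without convexity is, by contrast, a minor point once separate affineness is exploited, and regularity goes through essentially verbatim because the functions $T_{\rho}(1_{\Omega_{i}'}-1_{\Omega_{j}'})$ are smooth and enjoy strong unique continuation. As noted, the remaining $(m-1)$-dimensional problem is genuinely open for $m\geq4$, so this plan settles Conjecture \ref{conj2n} unconditionally only for $m=3$ and small $\rho$.
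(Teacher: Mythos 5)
First, a framing point: the statement you are proving is stated in the paper as a \emph{conjecture}; the paper itself only establishes the bilinear dimension-reduction result (Theorem \ref{mainthm1n}) and does not claim any case of Conjecture \ref{conj2n}, not even $m=3$ with small $\rho$. Your plan follows the same reduction strategy, but it contains two concrete errors, one of which is fatal to your conclusion. The decisive one is the dimension count. In the bilinear problem a translation must preserve the Gaussian volumes of \emph{both} partitions, so the admissible subspace $V$ is cut out by $2m$ linear conditions (one for each $\Omega_{i}$ and one for each $\Omega_{i}'$), whose rank is at most $2(m-1)$. Rank--nullity therefore gives $\dim V\geq \adimn-(2m-2)=\sdimn+3-2m$, not $\sdimn+2-m$, and the resulting product structure has base dimension $2m-2$; this is exactly Theorem \ref{mainthm1n}, which accordingly assumes $2m\leq\sdimn+3$. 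For $m=3$ the reduction is to $\R^{4}$, not $\R^{2}$, so \cite{heilman12} cannot be invoked to close the case; and even a reduction to $\R^{2}$ would leave the genuinely bilinear question of two a priori unrelated planar partitions together with the identity $\Omega_{i}'=-\Omega_{i}$, which \cite{heilman12} does not address. Your claimed unconditional resolution of Conjecture \ref{conj2n} for $m=3$ and small $\rho$ therefore does not follow.

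Second, your ``reflection-respecting'' variation has the wrong sign. With $X=v$ on the unprimed partition and $X'=-v$ on the primed one, the two cross terms in \eqref{four32pv2n} each acquire an extra minus sign; combined with Lemma \ref{treig2n}, where $S_{ij}(\langle v,N\rangle)=-\tfrac{1}{\rho}\langle v,N_{ij}'\rangle\vnormf{\overline{\nabla}T_{\rho}(1_{\Omega_{i}}-1_{\Omega_{j}})}$ (the minus sign coming from the \emph{positive} sign in \eqref{nabeq3n}), the second variation becomes $\bigl(1+\tfrac{1}{\rho}\bigr)$ times a nonnegative quantity. That is $\geq0$, which is exactly what a minimizer permits, so no information is obtained. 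The obstruction appears only when both partitions are translated in the \emph{same} direction $X=X'=v$, which yields the coefficient $\bigl(1-\tfrac{1}{\rho}\bigr)<0$ as in Lemma \ref{keylemn}. This sign error is repairable, but the dimension count is not: carried out correctly, your argument proves Theorem \ref{mainthm1n} and nothing more, and Conjecture \ref{conj2n} remains open even for $m=3$.
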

In the case that $a_{i}=1/m$ for all $1\leq i\leq m$, it is assumed that $w=0$ in Conjecture \ref{conj2n}.

Since we consider a bilinear version of noise stability in Problem \ref{prob2n}, existence of an optimizer is easier than in Problem \ref{prob2}.

\begin{lemma}[\embolden{Existence of a Minimizer}]\label{existlemn}
Let $0<\rho<1$ and let $m\geq2$.  Then there exist measurable sets $\Omega_{1},\ldots\Omega_{m},\Omega_{1}',\ldots\Omega_{m}'$ that minimize Problem \ref{prob2n}.
\end{lemma}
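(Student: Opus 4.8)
The plan is to follow the proof of Lemma \ref{existlem}, replacing the quadratic functional by a bilinear one, so that convexity is replaced by affineness in each variable separately. First I would relax to simplex-valued functions: let $\Delta_{m}$ be as in \eqref{deltadef} and let $D$ be the set of pairs $(f,g)$ of measurable maps $f,g\colon\R^{\adimn}\to\Delta_{m}$ satisfying $\int_{\R^{\adimn}}f_{i}(x)\gamma_{\adimn}(x)\,\d x=\int_{\R^{\adimn}}g_{i}(x)\gamma_{\adimn}(x)\,\d x=a_{i}$ for all $1\le i\le m$. Viewed inside $L^{\infty}(\gamma_{\adimn})^{m}\times L^{\infty}(\gamma_{\adimn})^{m}=(L^{1})^{*}$, the set $D$ is bounded, convex, and weak-$*$ closed (the simplex constraint and the finitely many volume constraints are weak-$*$ closed), hence weak-$*$ compact; since $L^{1}(\gamma_{\adimn})$ is separable this topology is metrizable on $D$. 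Writing $D=D^{0}\times D^{0}$ with $D^{0}\colonequals\{f\colon\R^{\adimn}\to\Delta_{m}\ \text{measurable},\ \int_{\R^{\adimn}}f_{i}(x)\gamma_{\adimn}(x)\,\d x=a_{i}\ \forall\,i\}$, I would study $C(f,g)\colonequals\sum_{i=1}^{m}\int_{\R^{\adimn}}f_{i}(x)T_{\rho}g_{i}(x)\gamma_{\adimn}(x)\,\d x$ on $D$.

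The key analytic input is weak-$*$ continuity of $C$ on $D$: on this bounded set, weak-$*$ convergence $(f^{(k)},g^{(k)})\to(f,g)$ implies $f^{(k)}_{i}\rightharpoonup f_{i}$ and $g^{(k)}_{i}\rightharpoonup g_{i}$ weakly in $L^{2}(\gamma_{\adimn})$, and since $T_{\rho}$ is a compact operator on $L^{2}(\gamma_{\adimn})$ for $\abs{\rho}<1$ (its Hermite eigenvalues $\rho^{\abs{k}}$ tend to $0$, cf.\ the Mehler expansion used in Lemma \ref{gpsd}), $T_{\rho}g^{(k)}_{i}\to T_{\rho}g_{i}$ in $L^{2}$-norm; a weak-times-strong pairing then gives $C(f^{(k)},g^{(k)})\to C(f,g)$. (Alternatively one writes $\int f_{i}T_{\rho}g_{i}\gamma_{\adimn}=\int (T_{\sqrt{\rho}}f_{i})(T_{\sqrt{\rho}}g_{i})\gamma_{\adimn}$ and uses compactness of $T_{\sqrt{\rho}}$ on both factors.) Hence $C$ attains its minimum over $D$ at some $(\widetilde f,\widetilde g)$.

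Next I would reduce to extreme points using bilinearity rather than convexity. For fixed $\widetilde g$, the map $f\mapsto C(f,\widetilde g)$ is affine and weak-$*$ continuous on the weak-$*$ compact convex set $D^{0}$, hence attains its minimum at an extreme point $f^{*}$ of $D^{0}$ (an affine function on a compact convex set attains its extrema at extreme points). Then, for fixed $f^{*}$, the map $g\mapsto C(f^{*},g)$ is likewise affine, attaining its minimum at an extreme point $g^{*}$ of $D^{0}$. Since $C(f^{*},g^{*})\le C(f^{*},\widetilde g)\le C(\widetilde f,\widetilde g)=\min_{D}C\le C(f^{*},g^{*})$, all these are equal, so $(f^{*},g^{*})\in D$ is a minimizer of $C$ consisting of extreme points. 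Finally I would identify the extreme points of $D^{0}$: exactly as implicit in Lemma \ref{existlem}, and absorbing the finitely many volume constraints by a Lyapunov/bang-bang argument (if $f$ takes non-vertex values of $\Delta_{m}$ on a set of positive $\gamma_{\adimn}$-measure, build a tangential perturbation $\pm\phi$ with $\sum_{i}\phi_{i}=0$, $\int_{\R^{\adimn}}\phi_{i}\gamma_{\adimn}=0$ for all $i$, and $f\pm\phi\in D^{0}$, contradicting extremality), the extreme points of $D^{0}$ are, up to $\gamma_{\adimn}$-null sets, the $\{e_{1},\dots,e_{m}\}$-valued maps. Thus $f^{*}=(1_{\Omega_{1}},\dots,1_{\Omega_{m}})$ and $g^{*}=(1_{\Omega_{1}'},\dots,1_{\Omega_{m}'})$ for partitions with $\gamma_{\adimn}(\Omega_{i})=\gamma_{\adimn}(\Omega_{i}')=a_{i}$; since $C$ restricted to such pairs is precisely the objective of Problem \ref{prob2n} and $\min_{D}C$ lower-bounds that objective, these sets minimize Problem \ref{prob2n}.

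The main obstacle, relative to Lemma \ref{existlem}, is that the bilinear $C$ is not convex, so one cannot directly invoke ``an extremum of a convex function on a compact convex set is attained at an extreme point''; the remedy is the two-step alternating minimization above, each step an affine optimization. A secondary technical point, already present (if terse) in Lemma \ref{existlem}, is the characterization of the extreme points of the volume-constrained simplex-valued function space $D^{0}$, which I would handle by the Lyapunov-type perturbation argument just sketched.
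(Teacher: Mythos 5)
Your proposal is correct and follows essentially the same route as the paper's proof: relax to $\Delta_{m}$-valued functions, use weak compactness and weak continuity of the bilinear form $C$, pass to extreme points via bilinearity, and identify extreme points with indicator functions of partitions. Your alternating (affine-in-each-variable) minimization, the compactness-of-$T_{\rho}$ justification of weak continuity, and the Lyapunov-type characterization of extreme points of the volume-constrained set are precisely the details the paper's terser argument leaves implicit.
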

\begin{proof}
Define $\Delta_{m}$ as in \eqref{deltadef}.  Let $f,g\colon\R^{\adimn}\to\Delta_{m}$.  The set $D_{0}\colonequals\{f\colon\R^{\adimn}\to\Delta_{m}\}$ is norm closed, bounded, and convex, therefore it is weakly compact and convex.  Consider the function
$$C(f,g)\colonequals\sum_{i=1}^{m}\int_{\R^{\adimn}}f_{i}(x)T_{\rho}g_{i}(x)\gamma_{\adimn}(x)\,\d x.$$
This function is weakly continuous on $D_{0}\times D_{0}$, and $D_{0}\times D_{0}$ is weakly compact, so there exists $\widetilde{f},\widetilde{g}\in D_{0}$ such that $C(\widetilde{f},\widetilde{g})=\min_{f,g\in D}C(f,g)$. Since $C$ is bilinear and $D_{0}$ is convex, the minimum of $C$ must be achieved at an extreme point of $D_{0}\times D_{0}$.  Let $e_{1},\ldots,e_{m}$ denote the standard basis of $\R^{m}$, so that $f,g$ takes their values in $\{e_{1},\ldots,e_{m}\}$. Then, for any $1\leq i\leq m$, define $\Omega_{i}\colonequals\{x\in\R^{\adimn}\colon f(x)=e_{i}\}$ and $\Omega_{i}'\colonequals\{x\in\R^{\adimn}\colon g(x)=e_{i}\}$.  Note that $f_{i}=1_{\Omega_{i}}$ and $g_{i}=1_{\Omega_{i}'}$ for all $1\leq i\leq m$.
\end{proof}

\begin{lemma}[\embolden{Regularity of a Minimizer}]\label{reglemn}
Let $\Omega_{1},\ldots,\Omega_{m},\Omega_{1}',\ldots,\Omega_{m}'\subset\R^{\adimn}$ be the measurable sets minimizing Problem \ref{prob2}, guaranteed to exist by Lemma \ref{existlemn}.  Then the sets $\Omega_{1},\ldots,\Omega_{m},\Omega_{1}',\ldots,\Omega_{m}'$ have locally finite surface area.  Moreover, for all $1\leq i\leq m$ and for all $x\in\partial\Omega_{i}$, there exists a neighborhood $U$ of $x$ such that $U\cap \partial\Omega_{i}$ is a finite union of $C^{\infty}$ $\sdimn$-dimensional manifolds.  The same holds for $\Omega_{1}',\ldots,\Omega_{m}'$.
\end{lemma}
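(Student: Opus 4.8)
The plan is to mirror the proof of Lemma \ref{reglem}, exploiting the fact that the objective in Problem \ref{prob2n} is \emph{bilinear} in the pair $\big((1_{\Omega_i})_i,(1_{\Omega_i'})_i\big)$, which makes the first-variation step even cleaner than in the maximization case. First I would hold the sets $\Omega_1',\ldots,\Omega_m'$ fixed and note that, as a function of $\Omega_1,\ldots,\Omega_m$ alone, the quantity $\sum_{i=1}^m\int_{\Omega_i}T_\rho 1_{\Omega_i'}(x)\gamma_{\adimn}(x)\,\d x$ is \emph{linear}. I would then run a bang-bang argument by contradiction exactly as in the proof of Lemma \ref{reglem}: if for some $j\ne k$, some $c\in\R$, and some subsets $U_j\subseteq\Omega_j$, $U_k\subseteq\Omega_k$ of equal positive Gaussian measure one has $T_\rho(1_{\Omega_j'}-1_{\Omega_k'})>c$ on $U_j$ and $<c$ on $U_k$, then swapping $U_j$ and $U_k$ between $\Omega_j$ and $\Omega_k$ changes the objective by exactly $\int_{\R^{\adimn}}(1_{U_k}-1_{U_j})T_\rho(1_{\Omega_j'}-1_{\Omega_k'})(x)\gamma_{\adimn}(x)\,\d x$ --- with \emph{no} quadratic correction term, since the sets $\Omega_i'$ are held fixed --- and this is strictly negative, contradicting minimality. (Continuity of $T_\rho(1_{\Omega_j'}-1_{\Omega_k'})$ together with the Lebesgue density theorem produce such $U_j,U_k$, just as in Lemma \ref{reglem}.) Hence, after a Lebesgue-null modification of the sets, there are constants $c_{ij}$ with
\[
\Omega_i\supseteq\{x\in\R^{\adimn}:T_\rho 1_{\Omega_i'}(x)<T_\rho 1_{\Omega_j'}(x)+c_{ij}\ \ \forall\,j\ne i\},\qquad\forall\,1\le i\le m.
\]

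Next I would use that $T_\rho$ is self-adjoint with respect to $\gamma_{\adimn}$, so that $\sum_i\int 1_{\Omega_i}T_\rho 1_{\Omega_i'}\gamma_{\adimn}=\sum_i\int 1_{\Omega_i'}T_\rho 1_{\Omega_i}\gamma_{\adimn}$; thus holding the $\Omega_i$ fixed and minimizing over the $\Omega_i'$ is the same linear problem with the roles reversed, and the identical argument yields constants $c_{ij}'$ with $\Omega_i'\supseteq\{x\in\R^{\adimn}:T_\rho 1_{\Omega_i}(x)<T_\rho 1_{\Omega_j}(x)+c_{ij}'\ \forall\,j\ne i\}$. Consequently, for each $1\le i<j\le m$, up to a set of Hausdorff dimension at most $\sdimn-1$,
\[
(\redb\Omega_i)\cap(\redb\Omega_j)\subseteq\{x:T_\rho(1_{\Omega_i'}-1_{\Omega_j'})(x)=c_{ij}\},\qquad (\redb\Omega_i')\cap(\redb\Omega_j')\subseteq\{x:T_\rho(1_{\Omega_i}-1_{\Omega_j})(x)=c_{ij}'\}.
\]

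Finally I would invoke exactly the level-set regularity input used in Lemma \ref{reglem}. The function $T_\rho(1_{\Omega_i'}-1_{\Omega_j'})$ is smooth and solves the Gaussian heat equation \eqref{oup}, so by the strong unique continuation property \cite{lin90} it does not vanish to infinite order at any point of $\R^{\adimn}$; then by the argument of \cite[Lemma 1.9]{hardt89} (see also \cite[Proposition 1.2]{lin94} and \cite[Theorem 2.1]{chen98}) its level sets are, in a neighborhood of each of their points, finite unions of $C^{\infty}$ $\sdimn$-dimensional manifolds, of Hausdorff dimension at most $\sdimn$ by \cite[Theorem 2.3]{chen98}. Applying this to all pairs $i<j$ for both the primed and unprimed families, and using that $\partial\Omega_i=\bigcup_{j\ne i}\big((\partial\Omega_i)\cap(\partial\Omega_j)\big)$ up to dimension $\sdimn-1$, gives the claimed local structure of each $\partial\Omega_i$ and $\partial\Omega_i'$ and, in particular, locally finite surface area.

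I do not anticipate a genuine obstacle here: the bilinearity actually removes the nonnegative quadratic term that had to be controlled in Lemma \ref{reglem}. The two points that need care are: (a) the swap in the bang-bang step must preserve all $m$ Gaussian-volume constraints, which forces $\gamma_{\adimn}(U_j)=\gamma_{\adimn}(U_k)$, exactly as in Lemma \ref{reglem}; and (b) one should verify that self-adjointness of $T_\rho$ genuinely reduces the ``primed'' case to the ``unprimed'' case verbatim, so that no separate computation is required.
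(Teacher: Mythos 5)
Your proposal is correct and matches the paper's intent: the paper omits an explicit proof of this lemma, stating only that the argument is "essentially identical, mutatis mutandis" to Lemma \ref{reglem}, and your write-up is exactly that adaptation (bang-bang swap via the first variation, Lebesgue density, then strong unique continuation and the Hardt--Simon/Lin level-set regularity). Your observation that bilinearity eliminates the quadratic correction term, and that self-adjointness of $T_{\rho}$ reduces the primed case to the unprimed one, are precisely the simplifications the paper is implicitly relying on.
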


We denote $\Sigma_{ij}\colonequals(\redb\Omega_{i})\cap(\redb\Omega_{j}), \Sigma_{ij}'\colonequals(\redb\Omega_{i}')\cap(\redb\Omega_{j}')$ for all $1\leq i<j\leq m$.

\begin{lemma}[\embolden{The First Variation for Minimizers}]\label{firstvarmaxnsn}
Suppose $\Omega_{1},\ldots,\Omega_{m},\Omega_{1}',\ldots,\Omega_{m}'\subset\R^{\adimn}$ minimize Problem \ref{prob2n}.  Then for all $1\leq i<j\leq m$, there exists $c_{ij},c_{ij}'\in\R$ such that
$$T_{\rho}(1_{\Omega_{i}}-1_{\Omega_{j}})(x)=c_{ij},\qquad\forall\,x\in\Sigma_{ij}.$$
$$T_{\rho}(1_{\Omega_{i}'}-1_{\Omega_{j}'})(x)=c_{ij}',\qquad\forall\,x\in\Sigma_{ij}.$$
\end{lemma}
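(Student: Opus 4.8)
The plan is to transcribe the proof of Lemma \ref{firstvarmaxns}, mutatis mutandis, to the bilinear functional of Problem \ref{prob2n}, using the symmetry $G(x,y)=G(y,x)$ of the kernel in \eqref{gdef} to treat the primed and unprimed families on the same footing. First I would record, as in Section \ref{secnoise}, that for each $1\le i\le m$
$$\int_{\R^{\adimn}}1_{\Omega_{i}}(x)T_{\rho}1_{\Omega_{i}'}(x)\gamma_{\adimn}(x)\,\d x=\int_{\Omega_{i}}\int_{\Omega_{i}'}G(x,y)\,\d x\,\d y,\qquad \int_{\Omega_{i}'}G(x,y)\,\d y=\gamma_{\adimn}(x)T_{\rho}1_{\Omega_{i}'}(x),$$
so the objective of Problem \ref{prob2n} equals $\sum_{i=1}^{m}\int_{\Omega_{i}}\int_{\Omega_{i}'}G(x,y)\,\d x\,\d y$. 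By Lemma \ref{reglemn}, each $\partial\Omega_{i}$ and $\partial\Omega_{i}'$ is a locally finite union of $C^{\infty}$ manifolds, so the first variation formula \eqref{Bone6} of Lemma \ref{latelemma3} is available, and the extension Lemma \ref{lemma27} supplies constraint-preserving variations.

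Next, fixing $X\in C_{0}^{\infty}(\R^{\adimn},\R^{\adimn})$, I would take the induced variation $\{\Omega_{i}^{(s)}\}$ of the unprimed family only, holding $\Omega_{1}',\ldots,\Omega_{m}'$ fixed. Writing the objective as $\sum_{i}\int_{\Omega_{i}'}\int_{\Omega_{i}^{(s)}}G(x,y)\,\d x\,\d y$ and applying \eqref{Bone6} to the inner integral for each fixed $y$, then integrating in $y$ over $\Omega_{i}'$, one gets that $\frac{\d}{\d s}\big|_{s=0}$ of the $i$-th summand equals $\int_{\partial\Omega_{i}}\gamma_{\adimn}(x)T_{\rho}1_{\Omega_{i}'}(x)\langle X(x),N(x)\rangle\,\d x$ ($N$ the outer unit normal to $\Omega_{i}$). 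Summing over $i$ and gathering the contributions of $\partial\Omega_{i}$ and $\partial\Omega_{j}$ along the interface $\Sigma_{ij}$ via $N_{ij}=-N_{ji}$, exactly as in Lemma \ref{firstvarmaxns}, yields
$$\frac{\d}{\d s}\Big|_{s=0}\sum_{i=1}^{m}\int_{\R^{\adimn}}1_{\Omega_{i}^{(s)}}(x)T_{\rho}1_{\Omega_{i}'}(x)\gamma_{\adimn}(x)\,\d x=\sum_{1\le i<j\le m}\int_{\Sigma_{ij}}T_{\rho}(1_{\Omega_{i}'}-1_{\Omega_{j}'})(x)\,\langle X(x),N_{ij}(x)\rangle\,\gamma_{\adimn}(x)\,\d x.$$
Since $(\Omega_{1},\ldots,\Omega_{m},\Omega_{1}',\ldots,\Omega_{m}')$ minimize Problem \ref{prob2n}, this derivative must vanish for every variation that preserves the constraints $\gamma_{\adimn}(\Omega_{i})=a_{i}$; by Lemma \ref{lemma27} it is enough that $\sum_{j\neq i}\int_{\Sigma_{ij}}\langle X,N_{ij}\rangle\gamma_{\adimn}=0$ for all $i$. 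If $T_{\rho}(1_{\Omega_{i}'}-1_{\Omega_{j}'})$ were nonconstant on some $\Sigma_{ij}$, then exactly as around \eqref{zero9.0} in Lemma \ref{reglem} and in the proof of Lemma \ref{firstvarmaxns} one can choose $f_{ij}$ supported on $\Sigma_{ij}$ with $\int_{\Sigma_{ij}}f_{ij}\gamma_{\adimn}=0$ (so the $m$ balance conditions hold, $f_{ij}$ vanishing on the other interfaces) and $\int_{\Sigma_{ij}}T_{\rho}(1_{\Omega_{i}'}-1_{\Omega_{j}'})f_{ij}\gamma_{\adimn}\neq0$, realize it via Lemma \ref{lemma27} as $\langle X,N_{ij}\rangle$, and contradict stationarity. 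Hence there is $c_{ij}'\in\R$ with $T_{\rho}(1_{\Omega_{i}'}-1_{\Omega_{j}'})\equiv c_{ij}'$ on $\Sigma_{ij}$.

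Finally, since $G(x,y)=G(y,x)$, the identical argument applied to a variation of the primed family $\Omega_{1}',\ldots,\Omega_{m}'$ (with the unprimed family fixed) produces $c_{ij}\in\R$ with $T_{\rho}(1_{\Omega_{i}}-1_{\Omega_{j}})$ constant on $\Sigma_{ij}'$; interchanging the names of the two families turns this pair of conclusions into the two displayed identities of the lemma. I do not anticipate a real obstacle here: the computation is a verbatim adaptation of Lemma \ref{firstvarmaxns}, and the only points worth checking are (i) that varying one family leaves the other family's integral---hence the relevant inner integrand---untouched, so no spurious boundary term over $\partial\Omega_{i}'$ appears in the displayed derivative (which is why, unlike in Lemma \ref{firstvarmaxns}, there is no factor $1/2$), and (ii) that minimality, exactly like maximality, enters only through the first-order stationarity condition, so replacing ``max'' by ``min'' changes nothing.
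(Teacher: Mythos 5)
Your proposal is correct and is precisely the argument the paper intends: the paper states this lemma without proof, deferring to Lemma \ref{firstvarmaxns} ``mutatis mutandis,'' and your adaptation (writing the bilinear objective as $\sum_{i}\int_{\Omega_{i}}\int_{\Omega_{i}'}G(x,y)\,\d y\,\d x$ and varying one family at a time while the other stays fixed) is exactly the right way to carry that out. One small caution: your derivation correctly yields $T_{\rho}(1_{\Omega_{i}}-1_{\Omega_{j}})$ constant on $\Sigma_{ij}'$ and $T_{\rho}(1_{\Omega_{i}'}-1_{\Omega_{j}'})$ constant on $\Sigma_{ij}$ --- which is the version used later in \eqref{nabeq3n} --- so the first display of the lemma as printed (with $\Sigma_{ij}$ in place of $\Sigma_{ij}'$) appears to be a typo, and no renaming of the two families makes both conclusions live on $\Sigma_{ij}$ as your last paragraph suggests.
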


We denote $N_{ij}(x)$ as the unit exterior normal vector to $\Sigma_{ij}$ for all $1\leq i<j\leq m$.  Also denote $N_{ij}'(x)$ as the unit exterior normal vector to $\Sigma_{ij}'$ for all $1\leq i<j\leq m$.  Let $\Omega_{1},\ldots,\Omega_{m},\Omega_{1}',\ldots,\Omega_{m}'\subset\R^{\adimn}$ be a partition of $\R^{\adimn}$ into measurable sets such that $\partial\Omega_{i},\partial\Omega_{i}'$ are a locally finite union of $C^{\infty}$ manifolds for all $1\leq i\leq m$.  Let $X,X'\in C_{0}^{\infty}(\R^{\adimn},\R^{\adimn})$.  Let $\{\Omega_{i}^{(s)}\}_{s\in(-1,1)}$ be the variation of $\Omega_{i}$ corresponding to $X$ for all $1\leq i\leq m$.  Let $\{\Omega_{i}^{'(s)}\}_{s\in(-1,1)}$ be the variation of $\Omega_{i}'$ corresponding to $X'$ for all $1\leq i\leq m$.  Denote $f_{ij}(x)\colonequals\langle X(x),N_{ij}(x)\rangle$ for all $x\in\Sigma_{ij}$ and $f_{ij}'(x)\colonequals\langle X'(x),N_{ij}'(x)\rangle$ for all $x\in\Sigma_{ij}'$.  We let $N$ denote the exterior pointing unit normal vector to $\redb\Omega_{i}$ for any $1\leq i\leq m$ and we let $N'$ denote the exterior pointing unit normal vector to $\redb\Omega_{i}'$ for any $1\leq i\leq m$.

\begin{lemma}[\embolden{Volume Preserving Second Variation of Minimizers, Multiple Sets}]\label{lemma7rn}
Let $\Omega_{1},\ldots,\Omega_{m},\Omega_{1}',\ldots,\Omega_{m}'\subset\R^{\adimn}$ be two partitions of $\R^{\adimn}$ into measurable sets such that $\partial\Omega_{i},\partial\Omega_{i}'$ are a locally finite union of $C^{\infty}$ manifolds for all $1\leq i\leq m$.  Then
\begin{equation}\label{four32pv2n}
\begin{aligned}
&\frac{\d^{2}}{\d s^{2}}\Big|_{s=0}\sum_{i=1}^{m}\int_{\R^{\adimn}} \int_{\R^{\adimn}} 1_{\Omega_{i}^{(s)}}(y)G(x,y) 1_{\Omega_{i}^{'(s)}}(x)\,\d x\d y\\
&\qquad\qquad\qquad=\sum_{1\leq i<j\leq m}\int_{\Sigma_{ij}'}\Big[\Big(\int_{\redb\Omega_{i}}-\int_{\redb\Omega_{j}}\Big)G(x,y)\langle X(y),N(y)\rangle \,\d y\Big] f_{ij}'(x) \,\d x\\
&\qquad\qquad\qquad\qquad+\sum_{1\leq i<j\leq m}\int_{\Sigma_{ij}}\Big[\Big(\int_{\redb\Omega_{i}'}-\int_{\redb\Omega_{j}'}\Big)G(x,y)\langle X'(y),N'(y)\rangle \,\d y\Big] f_{ij}(x) \,\d x\\
&\qquad\qquad\qquad\qquad\qquad+\int_{\Sigma_{ij}'}\vnormf{\overline{\nabla} T_{\rho}(1_{\Omega_{i}}-1_{\Omega_{j}})(x)}(f_{ij}'(x))^{2} \gamma_{\adimn}(x)\,\d x\\
&\qquad\qquad\qquad\qquad\qquad+\int_{\Sigma_{ij}}\vnormf{\overline{\nabla} T_{\rho}(1_{\Omega_{i}'}-1_{\Omega_{j}'})(x)}(f_{ij}(x))^{2} \gamma_{\adimn}(x)\,\d x.
\end{aligned}
\end{equation}
Also,
\begin{equation}\label{nabeq3n}
\begin{aligned}
\overline{\nabla}T_{\rho}(1_{\Omega_{i}}-1_{\Omega_{j}})(x)&=N_{ij}'(x)\vnormf{\overline{\nabla}T_{\rho}(1_{\Omega_{i}}-1_{\Omega_{j}})(x)},\qquad\forall\,x\in\Sigma_{ij}'.\\
\overline{\nabla}T_{\rho}(1_{\Omega_{i}'}-1_{\Omega_{j}'})(x)&=N_{ij}(x)\vnormf{\overline{\nabla}T_{\rho}(1_{\Omega_{i}'}-1_{\Omega_{j}'})(x)},\qquad\forall\,x\in\Sigma_{ij}.\\
\end{aligned}
\end{equation}
Moreover, $\vnormf{\overline{\nabla} T_{\rho}(1_{\Omega_{i}}-1_{\Omega_{j}})(x)}>0$ for all $x\in\Sigma_{ij}'$, except on a set of Hausdorff dimension at most $\sdimn-1$, and $\vnormf{\overline{\nabla} T_{\rho}(1_{\Omega_{i}'}-1_{\Omega_{j}'})(x)}>0$ for all $x\in\Sigma_{ij}$, except on a set of Hausdorff dimension at most $\sdimn-1$.
\end{lemma}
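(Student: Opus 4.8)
The plan is to run the proof of Lemma \ref{lemma7r} in the bilinear setting, the one genuinely new feature being a sign change forced by minimizing instead of maximizing. Since the second-variation formulas of Theorem \ref{thm4} and Lemma \ref{lemma6v2} are stated only when one region moves against itself, I first record a bilinear version. Put $F(s,t):=\sum_{i=1}^{m}\int_{\R^{\adimn}}\int_{\R^{\adimn}}1_{\Omega_{i}^{(s)}}(y)\,G(x,y)\,1_{\Omega_{i}^{'(t)}}(x)\,\d x\,\d y$, so that the quantity in \eqref{four32pv2n} is $\frac{\d^{2}}{\d s^{2}}\big|_{s=0}F(s,s)=F_{ss}+2F_{st}+F_{tt}$ at $(0,0)$. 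The mixed term $F_{st}$ comes from applying the first variation (Lemma \ref{latelemma3}) once in each variable; regrouping its double boundary integral over the interfaces $\Sigma_{ij}$, and symmetrically over the $\Sigma_{ij}'$, via $N_{ij}=-N_{ji}$, yields the two sums on the first two lines of the right-hand side of \eqref{four32pv2n} (these agree by Schwarz's theorem), and $2F_{st}=F_{st}+F_{st}$ accounts for both. For $F_{ss}$ one freezes the $\Omega_{i}'$, so that $F(s,0)=\sum_{i}\int_{\Omega_{i}^{(s)}}\gamma_{\adimn}(y)\,T_{\rho}1_{\Omega_{i}'}(y)\,\d y$ is a Gaussian volume with a fixed weight; the weighted analogue of Lemma \ref{lemma41} gives $F_{ss}=\sum_{1\le i<j\le m}\int_{\Sigma_{ij}}\mathrm{div}\!\big(\gamma_{\adimn}\,T_{\rho}(1_{\Omega_{i}'}-1_{\Omega_{j}'})\,X\big)f_{ij}\,\d x$, and $F_{tt}$ is the same with $X,\Omega_{i},\Sigma_{ij}$ replaced by $X',\Omega_{i}',\Sigma_{ij}'$. (Alternatively this identity can be read off from Lemma \ref{lemma6v2} by polarization.)

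I then bring in the hypothesis, implicit in \eqref{nabeq3n} and the final clause of the statement, that the two partitions minimize Problem \ref{prob2n}. By the first variation for minimizers (Lemma \ref{firstvarmaxnsn}), varying the $\Omega_{i}$ forces $T_{\rho}(1_{\Omega_{i}'}-1_{\Omega_{j}'})$ to be constant on $\Sigma_{ij}$, and varying the $\Omega_{i}'$ forces $T_{\rho}(1_{\Omega_{i}}-1_{\Omega_{j}})$ to be constant on $\Sigma_{ij}'$, for all $i<j$. Expanding the divergence in $F_{ss}$, the part proportional to the now constant value of $T_{\rho}(1_{\Omega_{i}'}-1_{\Omega_{j}'})$ on $\Sigma_{ij}$ equals $\sum_{i<j}(\mathrm{const})\int_{\Sigma_{ij}}(\mathrm{div}X-\langle X,x\rangle)f_{ij}\,\gamma_{\adimn}\,\d x$, which vanishes for volume-preserving variations (built as in Lemma \ref{lemma27}) by the combinatorial Lemma \ref{lemma25.3}, exactly as the last term of Lemma \ref{lemma6v2} is disposed of in the proof of Lemma \ref{lemma7r}; this leaves $F_{ss}=\sum_{i<j}\int_{\Sigma_{ij}}\langle X,\overline{\nabla}T_{\rho}(1_{\Omega_{i}'}-1_{\Omega_{j}'})\rangle f_{ij}\,\gamma_{\adimn}\,\d x$ and its analogue for $F_{tt}$. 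Constancy also forces $\overline{\nabla}T_{\rho}(1_{\Omega_{i}'}-1_{\Omega_{j}'})$ to be parallel to $N_{ij}$ on $\Sigma_{ij}$ and $\overline{\nabla}T_{\rho}(1_{\Omega_{i}}-1_{\Omega_{j}})$ parallel to $N_{ij}'$ on $\Sigma_{ij}'$, i.e.\ \eqref{nabeq3n} up to an undetermined sign.

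To pin down the sign I probe $F_{tt}$ in isolation by taking $X\equiv 0$: then $\Omega_{i}^{(s)}=\Omega_{i}$ for all $s$, the mixed term vanishes, and $\frac{\d^{2}}{\d s^{2}}\big|_{s=0}F(s,s)=F_{tt}=\pm\sum_{i<j}\int_{\Sigma_{ij}'}\vnormf{\overline{\nabla}T_{\rho}(1_{\Omega_{i}}-1_{\Omega_{j}})}\,(f_{ij}')^{2}\,\gamma_{\adimn}\,\d x$. Since the partitions minimize Problem \ref{prob2n} this must be $\ge 0$ for every volume-preserving $X'$, and choosing $X'$ supported near a point of $\Sigma_{ij}'$ where the gradient does not vanish excludes the minus sign; taking $X'\equiv 0$ instead gives the $+$ sign in the other line of \eqref{nabeq3n}. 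This is exactly where minimizing rather than maximizing flips the sign relative to Lemma \ref{lemma7r}; note that, unlike there, one does not need the positive semidefiniteness of $G$ from Lemma \ref{gpsd}, because switching off one vector field removes the bilinear term outright. Substituting \eqref{nabeq3n} into the simplified $F_{ss}$ and $F_{tt}$ converts them into $\sum_{i<j}\int_{\Sigma_{ij}}\vnormf{\overline{\nabla}T_{\rho}(1_{\Omega_{i}'}-1_{\Omega_{j}'})}\,(f_{ij})^{2}\,\gamma_{\adimn}$ and $\sum_{i<j}\int_{\Sigma_{ij}'}\vnormf{\overline{\nabla}T_{\rho}(1_{\Omega_{i}}-1_{\Omega_{j}})}\,(f_{ij}')^{2}\,\gamma_{\adimn}$, which together with $2F_{st}$ is precisely \eqref{four32pv2n}. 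Finally, the ``moreover'' is the real-analyticity argument of Lemma \ref{reglem} and the end of Lemma \ref{lemma7r}: since $\gamma_{\adimn}(\Omega_{i}),\gamma_{\adimn}(\Omega_{j})\in(0,1)$ the function $1_{\Omega_{i}}-1_{\Omega_{j}}$ is not a.e.\ constant, so $T_{\rho}(1_{\Omega_{i}}-1_{\Omega_{j}})$ is real-analytic and, by the strong unique continuation property for the heat equation, never vanishes to infinite order, whence inside the level set $\{T_{\rho}(1_{\Omega_{i}}-1_{\Omega_{j}})=c_{ij}\}\supset\Sigma_{ij}'$ the points at which its gradient also vanishes form a stratum of Hausdorff dimension at most $\sdimn-1$; likewise for $T_{\rho}(1_{\Omega_{i}'}-1_{\Omega_{j}'})$ on $\Sigma_{ij}$.

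The main obstacle is bookkeeping rather than ideas: deriving the bilinear second-variation identity carefully (or polarizing without losing the cross term), and keeping every normal orientation, every $i\leftrightarrow j$ antisymmetry, and the $X\leftrightarrow X'$ symmetrization consistent, so that the factor $2$ on $F_{st}$ splits correctly into the two boundary sums of \eqref{four32pv2n}. The one conceptual point is the sign determination, and it is in fact cleaner here than in the symmetric case precisely because $F_{ss}$ and $F_{tt}$ can be isolated by switching off one vector field.
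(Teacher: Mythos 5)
Your proposal is correct and organizes the computation exactly as the paper intends: polarize the second variation as $F_{ss}+2F_{st}+F_{tt}$, identify $2F_{st}$ with the two cross terms of \eqref{four32pv2n}, treat $F_{ss}$ and $F_{tt}$ as second variations of weighted volumes, kill the term proportional to the constants $c_{ij},c_{ij}'$ using volume preservation together with Lemma \ref{lemma25.3}, and convert $\langle\overline{\nabla}T_{\rho}(\cdot),X\rangle f_{ij}$ into $\vnormf{\overline{\nabla}T_{\rho}(\cdot)}f_{ij}^{2}$ via the parallel-gradient condition. Where you diverge from the paper is in the two sub-arguments it flags as ``requiring a slightly different argument'': for the sign in \eqref{nabeq3n} the paper runs a two-field scaling argument (choose $X'$ to annihilate one quadratic term, choose $X$ to make the cross terms negative, then shrink $X$ so its quadratic contribution is dominated), whereas you simply set $X\equiv0$ so that the cross and one quadratic term vanish identically and nonnegativity of $F_{tt}$ at a minimizer forces the $+$ sign; this is cleaner, and your observation that Lemma \ref{gpsd} becomes unnecessary in the bilinear setting is exactly right. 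For the final non-degeneracy clause the paper again argues variationally, while you invoke the strong unique continuation/stratification machinery already used in Lemma \ref{reglem}, applied to the level set $\{T_{\rho}(1_{\Omega_{i}}-1_{\Omega_{j}})=c_{ij}\}\supset\Sigma_{ij}'$ furnished by the first variation in the primed sets; both routes are legitimate, and yours has the advantage of not requiring a careful quantitative comparison of the competing terms in \eqref{four32pv2n}. One bookkeeping point worth making explicit in a write-up: the localized $X'$ used to fix the sign must still be volume-preserving, which is arranged as in Lemma \ref{lemma27} by taking $f_{ij}'$ to be a mean-zero bump near the chosen point (the conclusion is unaffected since only $(f_{ij}')^{2}$ enters).
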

Equation \eqref{nabeq3n} and the last assertion require a slightly different argument than previously used.  To see the last assertion, note that if there exists $1\leq i<j\leq m$ such that $\vnorm{\overline{\nabla} T_{\rho}(1_{\Omega_{i}}-1_{\Omega_{j}})(x)}=0$ on an open set in $\Sigma_{ij}'$, then choose $X'$ supported in this open set so that the third term of \eqref{four32pv2n} is zero.  Then, choose $Y$ such that sum of the first two terms in \eqref{four32pv2n} is negative.  Multiplying then $X$ by a small positive constant, and noting that the fourth term in \eqref{four32pv2n} has quadratic dependence on $X$, we can create a negative second derivative of the noise stability, giving a contradiction.  We can similarly justify the positive signs appearing in \eqref{nabeq3n} (as opposed to the negative signs from \eqref{nabeq3}).

%

Let $v\in\R^{\adimn}$.  For simplicity of notation, we denote $\langle v,N\rangle$ as the collection of functions $(\langle v,N_{ij}\rangle)_{1\leq i<j\leq m}$ and we denote $\langle v,N'\rangle$ as the collection of functions $(\langle v,N_{ij}'\rangle)_{1\leq i<j\leq m}$.  For any $1\leq i<j\leq m$, define
\begin{equation}\label{sdef2n}
\begin{aligned}
S_{ij}(\langle v,N\rangle)(x)
&\colonequals (1-\rho^{2})^{-(\adimn)/2}(2\pi)^{-(\adimn)/2}\Big(\int_{\partial\Omega_{i}}-\int_{\partial\Omega_{j}}\Big)\langle v,N(y)\rangle e^{-\frac{\vnorm{y-\rho x}^{2}}{2(1-\rho^{2})}}\,\d y,
\,\forall\,x\in\Sigma_{ij'}\\
S_{ij}'(\langle v,N'\rangle)(x)
&\colonequals (1-\rho^{2})^{-(\adimn)/2}(2\pi)^{-(\adimn)/2}\Big(\int_{\partial\Omega_{i}'}-\int_{\partial\Omega_{j}'}\Big)\langle v,N'(y)\rangle e^{-\frac{\vnorm{y-\rho x}^{2}}{2(1-\rho^{2})}}\,\d y
\,\forall\,x\in\Sigma_{ij}.
\end{aligned}
\end{equation}

\begin{lemma}[\embolden{Key Lemma, $m\geq 2$, Translations as Almost Eigenfunctions}]\label{treig2n}
Let $\Omega_{1},\ldots,\Omega_{m},\Omega_{1}',\ldots,\Omega_{m}'\subset\R^{\adimn}$ minimize problem \ref{prob2n}.  Fix $1\leq i<j\leq m$.  Let $v\in\R^{\adimn}$.  Then
\begin{flalign*}
S_{ij}(\langle v,N\rangle)(x)
&=-\langle v,N_{ij}'(x)\rangle\frac{1}{\rho}\vnormf{\overline{\nabla} T_{\rho}(1_{\Omega_{i}}-1_{\Omega_{j}})(x)},\qquad\forall\,x\in\Sigma_{ij}'.\\
S_{ij}'(\langle v,N'\rangle)(x)
&=-\langle v,N_{ij}(x)\rangle\frac{1}{\rho}\vnormf{\overline{\nabla} T_{\rho}(1_{\Omega_{i}'}-1_{\Omega_{j}'})(x)},\qquad\forall\,x\in\Sigma_{ij}.\\
\end{flalign*}
\end{lemma}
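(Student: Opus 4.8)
The plan is to follow the proof of Lemma \ref{treig2} line for line, the only genuine change being that the ``almost eigenvalue'' picks up an extra minus sign, coming from the \emph{plus} signs in \eqref{nabeq3n} (in contrast to the minus signs in \eqref{nabeq3} used in the maximization setting). I would prove the first identity; the second then follows by swapping the roles of the primed and unprimed families of sets throughout.

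First I would record the divergence-theorem identity for $\langle v,\overline{\nabla}T_{\rho}1_{\Omega_{i}}\rangle$. Exactly as in the chain of equalities culminating in \eqref{gre2} --- which uses only the definition \eqref{oudef} of $T_{\rho}$ and Gauss's theorem, the latter justified on the noncompact region by the truncation argument of Remark \ref{drk} applied verbatim --- one gets, for every $x\in\R^{\adimn}$,
$$
\langle v,\overline{\nabla}T_{\rho}1_{\Omega_{i}}(x)\rangle=-(1-\rho^{2})^{-(\adimn)/2}(2\pi)^{-(\adimn)/2}\rho\int_{\redb\Omega_{i}}\langle v,N(y)\rangle e^{-\frac{\vnorm{y-\rho x}^{2}}{2(1-\rho^{2})}}\,\d y.
$$
Subtracting the analogous identity for $\Omega_{j}$ and comparing with the definition of $S_{ij}$ in \eqref{sdef2n}, this yields
$$
\langle v,\overline{\nabla}T_{\rho}(1_{\Omega_{i}}-1_{\Omega_{j}})(x)\rangle=-\rho\,S_{ij}(\langle v,N\rangle)(x),\qquad\forall\,x\in\Sigma_{ij}'.
$$
Next, for $x\in\Sigma_{ij}'$ I would invoke the first line of \eqref{nabeq3n} from Lemma \ref{lemma7rn}, namely $\overline{\nabla}T_{\rho}(1_{\Omega_{i}}-1_{\Omega_{j}})(x)=N_{ij}'(x)\vnormf{\overline{\nabla}T_{\rho}(1_{\Omega_{i}}-1_{\Omega_{j}})(x)}$, and pair both sides with $v$ to get $\langle v,\overline{\nabla}T_{\rho}(1_{\Omega_{i}}-1_{\Omega_{j}})(x)\rangle=\langle v,N_{ij}'(x)\rangle\vnormf{\overline{\nabla}T_{\rho}(1_{\Omega_{i}}-1_{\Omega_{j}})(x)}$. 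Combining the last two displays gives
$$
S_{ij}(\langle v,N\rangle)(x)=-\frac{1}{\rho}\langle v,N_{ij}'(x)\rangle\vnormf{\overline{\nabla}T_{\rho}(1_{\Omega_{i}}-1_{\Omega_{j}})(x)},\qquad\forall\,x\in\Sigma_{ij}',
$$
which is the first claimed identity. Running the identical computation with $(\Omega_{i},\Omega_{j},N,\Sigma_{ij})$ and $(\Omega_{i}',\Omega_{j}',N',\Sigma_{ij}')$ interchanged, and using the second line of \eqref{nabeq3n}, produces the second identity.

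I do not expect a genuinely new obstacle here: everything reduces to the same divergence-theorem bookkeeping as in Lemma \ref{treig2}, and the convergence issue is handled by Remark \ref{drk} exactly as before. The one point that must be watched is the sign in \eqref{nabeq3n}: unlike in the maximization problem, here $\overline{\nabla}T_{\rho}(1_{\Omega_{i}}-1_{\Omega_{j}})$ is aligned with (not anti-aligned with) the normal $N_{ij}'$ on $\Sigma_{ij}'$, and this is precisely what produces the extra minus sign in the conclusion. That sign is already pinned down in Lemma \ref{lemma7rn} by the bilinear second-variation argument (supporting $X'$ near a putative bad point so a cross term vanishes, choosing the other field to make the remaining cross terms negative, and rescaling it so the quadratic term is negligible, contradicting optimality). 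Granting that input, the present lemma is routine.
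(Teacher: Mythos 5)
Your proposal is correct and is exactly the argument the paper intends: the paper does not write out a proof of Lemma \ref{treig2n}, remarking only that it follows from the proof of Lemma \ref{treig2} with the sign flip coming from \eqref{nabeq3n}, and your write-up supplies precisely that computation (the divergence-theorem identity of \eqref{gre2} combined with the aligned, rather than anti-aligned, normal in \eqref{nabeq3n}). No discrepancy with the paper's approach.
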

When compared to Lemma \ref{treig2}, Lemma \ref{treig2n} has a negative sign on the right side of the equality, resulting from the positive sign in \eqref{nabeq3n} (as opposed to the negative sign on the right side of \eqref{nabeq3}).  Lemmas \ref{lemma7rn} and \ref{treig2n} then imply the following.

\begin{lemma}[\embolden{Second Variation of Translations, Multiple Sets}]\label{keylemn}
Let $0<\rho<1$.  Let $v\in\R^{\adimn}$.  Let $\Omega_{1},\ldots,\Omega_{m}$ minimize problem \ref{prob2}.  For each $1\leq i\leq m$, let $\{\Omega_{i}^{(s)}\}_{s\in(-1,1)}$ be the variation of $\Omega_{i}$ corresponding to the constant vector field $X\colonequals v$.  Assume that
$$\int_{\partial\Omega_{i}}\langle v,N(x)\rangle \gamma_{\adimn}(x)\,\d x=\int_{\partial\Omega_{i}'}\langle v,N(x)\rangle \gamma_{\adimn}(x)\,\d x=0,\qquad\forall\,1\leq i\leq m.$$
Then
\begin{flalign*}
&\frac{\d^{2}}{\d s^{2}}\Big|_{s=0}\sum_{i=1}^{m}\int_{\R^{\adimn}}1_{\Omega_{i}^{(s)}}(x)T_{\rho}1_{\Omega_{i}^{'(s)}}(x)\gamma_{\adimn}(x)\,\d x\\
&\qquad\qquad\qquad=\Big(-\frac{1}{\rho}+1\Big)\sum_{1\leq i<j\leq m}\int_{\Sigma_{ij}}\vnormf{\overline{\nabla}T_{\rho}(1_{\Omega_{i}'}-1_{\Omega_{j}'})(x)}\langle v,N_{ij}(x)\rangle^{2}\gamma_{\adimn}(x)\,\d x\\
&\qquad\qquad\qquad\,\,+\Big(-\frac{1}{\rho}+1\Big)\sum_{1\leq i<j\leq m}\int_{\Sigma_{ij}'}\vnormf{\overline{\nabla}T_{\rho}(1_{\Omega_{i}}-1_{\Omega_{j}})(x)}\langle v,N_{ij}'(x)\rangle^{2}\gamma_{\adimn}(x)\,\d x.
\end{flalign*}
\end{lemma}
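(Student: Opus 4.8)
The plan is to run the proof of Lemma~\ref{keylem} essentially verbatim, replacing its two inputs by their bilinear counterparts: Lemma~\ref{lemma7rn} in place of Lemma~\ref{lemma7r}, and Lemma~\ref{treig2n} in place of Lemma~\ref{treig2}. The one thing to watch is the extra minus sign carried by \eqref{nabeq3n} and by Lemma~\ref{treig2n}, which is ultimately responsible for the coefficient $1-\tfrac1\rho$ here (versus $\tfrac1\rho-1$ in Lemma~\ref{keylem}).

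First I would apply Lemma~\ref{lemma7rn} with the constant fields $X\colonequals v$ and $X'\colonequals v$. Since $v$ is not compactly supported, this is done by first taking $X,X'$ equal to $v$ on a ball $B(0,r)$ and cut off outside, applying the lemma, and letting $r\to\infty$; the error terms tend to $0$ uniformly on compacts exactly as in Remark~\ref{drk}. The hypotheses $\int_{\partial\Omega_i}\langle v,N\rangle\gamma_{\adimn}=\int_{\partial\Omega_i'}\langle v,N\rangle\gamma_{\adimn}=0$ for all $i$ are precisely what Lemma~\ref{lemma27}, applied to the two partitions separately, needs in order to extend $v|_{\cup_{i<j}\Sigma_{ij}}$ (resp.\ $v|_{\cup_{i<j}\Sigma_{ij}'}$) to a field whose variation preserves every $\gamma_{\adimn}(\Omega_i)$ (resp.\ every $\gamma_{\adimn}(\Omega_i')$); this second-order volume preservation is what lets the constant-coefficient ($c_{ij}$, $c_{ij}'$) terms in the second variation be dropped via the combinatorial Lemma~\ref{lemma25.3}, as in Lemma~\ref{lemma7rn}. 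The outcome is \eqref{four32pv2n} with $f_{ij}=\langle v,N_{ij}\rangle$ on $\Sigma_{ij}$ and $f_{ij}'=\langle v,N_{ij}'\rangle$ on $\Sigma_{ij}'$.

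Next I would identify the two ``cross'' terms of \eqref{four32pv2n} with the operators $S_{ij}$, $S_{ij}'$: by the expression \eqref{gdef} for $G$ and the definition \eqref{sdef2n}, for $x\in\Sigma_{ij}'$ one has $\big(\int_{\redb\Omega_i}-\int_{\redb\Omega_j}\big)G(x,y)\langle v,N(y)\rangle\,\d y=\gamma_{\adimn}(x)\,S_{ij}(\langle v,N\rangle)(x)$, and symmetrically for the primed data on $\Sigma_{ij}$. Inserting Lemma~\ref{treig2n}, namely $S_{ij}(\langle v,N\rangle)(x)=-\tfrac1\rho\langle v,N_{ij}'(x)\rangle\,\vnormf{\overline\nabla T_\rho(1_{\Omega_i}-1_{\Omega_j})(x)}$ on $\Sigma_{ij}'$ (and its primed analogue on $\Sigma_{ij}$), turns the first cross term into $-\tfrac1\rho\sum_{i<j}\int_{\Sigma_{ij}'}\vnormf{\overline\nabla T_\rho(1_{\Omega_i}-1_{\Omega_j})}\langle v,N_{ij}'\rangle^2\gamma_{\adimn}$, which adds to the third term of \eqref{four32pv2n} (coefficient $+1$) to produce the factor $-\tfrac1\rho+1$; the second cross term combines with the fourth term of \eqref{four32pv2n} in the same way. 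Collecting the two pieces gives exactly the claimed identity. Finiteness of the two integrals on the right-hand side is established a priori, as in Lemma~\ref{keylem}: apply the divergence theorem to $x\mapsto v\langle v,\overline\nabla T_\rho 1_{\Omega_i}(x)\rangle\gamma_{\adimn}(x)$ and to $x\mapsto v\langle v,\overline\nabla T_\rho 1_{\Omega_i'}(x)\rangle\gamma_{\adimn}(x)$, use \eqref{nabeq3n}, and sum over $i$.

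The main obstacle is not conceptual but technical, and is the same as in the positive-correlation case: rigorously handling the non-compactly-supported constant field $v$ (the truncation/limit argument of Remark~\ref{drk}, now run for the two families at once, together with the a priori finiteness of the boundary integrals), and keeping careful track of the sign in \eqref{nabeq3n}/Lemma~\ref{treig2n} so that the coefficient comes out $1-\tfrac1\rho$ rather than $\tfrac1\rho-1$.
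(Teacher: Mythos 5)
Your proof is correct and follows exactly the route the paper intends: the paper gives no separate argument for Lemma \ref{keylemn} beyond stating that Lemmas \ref{lemma7rn} and \ref{treig2n} imply it, and your combination of the bilinear second-variation formula \eqref{four32pv2n} with the almost-eigenfunction identity \eqref{sdef2n}--Lemma \ref{treig2n} (plus the truncation argument of Remark \ref{drk} and the a priori finiteness check) is precisely that derivation, with the signs tracked correctly to yield the coefficient $1-\tfrac1\rho$.
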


Since $\rho\in(0,1)$, $-\frac{1}{\rho}+1<0$.  (The analogous inequality in Lemma \ref{keylem} was $\frac{1}{\rho}-1>0$.)  Repeating the argument of Theorem \ref{mainthm1} then gives the following.

\begin{theorem}[\embolden{Main Structure Theorem/ Dimension Reduction, Negative Correlation}]\label{mainthm1n}
Fix $0<\rho<1$.  Let $m\geq2$ with $2m\leq \sdimn+3$.  Let $\Omega_{1},\ldots\Omega_{m},\Omega_{1}',\ldots\Omega_{m}'\subset\R^{\adimn}$ minimize Problem \ref{prob2n}.  Then, after rotating the sets $\Omega_{1},\ldots\Omega_{m},\Omega_{1}',\ldots\Omega_{m}'$ and applying Lebesgue measure zero changes to these sets, there exist measurable sets $\Theta_{1},\ldots\Theta_{m},\Theta_{1}',\ldots\Theta_{m}'\subset\R^{2m-2}$ such that,
$$\Omega_{i}=\Theta_{i}\times\R^{\sdimn-2m+3},\,\,\Omega_{i}'=\Theta_{i}'\times\R^{\sdimn-2m+3}\qquad\forall\, 1\leq i\leq m.$$  
\end{theorem}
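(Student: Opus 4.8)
The plan is to repeat the variational argument behind Theorem~\ref{mainthm1}, now applied to the bilinear minimization Problem~\ref{prob2n}, carrying along the two families of interfaces and tracking the sign reversals caused by minimizing rather than maximizing. By Lemma~\ref{existlemn} a minimizer $\Omega_{1},\ldots,\Omega_{m},\Omega_{1}',\ldots,\Omega_{m}'$ of Problem~\ref{prob2n} exists, and by Lemma~\ref{reglemn} each of $\partial\Omega_{i},\partial\Omega_{i}'$ is a locally finite union of $C^{\infty}$ $\sdimn$-dimensional manifolds; set $\Sigma_{ij}\colonequals(\redb\Omega_{i})\cap(\redb\Omega_{j})$ and $\Sigma_{ij}'\colonequals(\redb\Omega_{i}')\cap(\redb\Omega_{j}')$. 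By Lemma~\ref{firstvarmaxnsn}, $T_{\rho}(1_{\Omega_{i}}-1_{\Omega_{j}})$ is constant on $\Sigma_{ij}$ and $T_{\rho}(1_{\Omega_{i}'}-1_{\Omega_{j}'})$ is constant on $\Sigma_{ij}'$, and by Lemma~\ref{lemma7rn} the gradient identities \eqref{nabeq3n} hold with the positive sign, while $\vnormf{\overline{\nabla}T_{\rho}(1_{\Omega_{i}}-1_{\Omega_{j}})}>0$ on $\Sigma_{ij}'$ off a set of Hausdorff dimension $\le\sdimn-1$, and $\vnormf{\overline{\nabla}T_{\rho}(1_{\Omega_{i}'}-1_{\Omega_{j}'})}>0$ on $\Sigma_{ij}$ off such a set.

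Next, fix $v\in\R^{\adimn}$ and take the constant vector fields $X=X'=v$, generating the simultaneous translations $\Omega_{i}^{(s)}=\Omega_{i}+sv$ and $\Omega_{i}^{'(s)}=\Omega_{i}'+sv$. Define
$$
V\colonequals\Big\{v\in\R^{\adimn}\colon \sum_{j\neq i}\int_{\Sigma_{ij}}\langle v,N_{ij}\rangle\gamma_{\adimn}(x)\,\d x=0=\sum_{j\neq i}\int_{\Sigma_{ij}'}\langle v,N_{ij}'\rangle\gamma_{\adimn}(x)\,\d x,\ \ \forall\,1\le i\le m\Big\}.
$$
This is the kernel of a linear map $\R^{\adimn}\to\R^{m}\times\R^{m}$ whose two $\R^{m}$-valued blocks each have rank at most $m-1$ (the coordinates of a block sum to zero, since $N_{ij}=-N_{ji}$), so $\dim V\ge\adimn-(2m-2)=\sdimn-2m+3$, which is nonnegative by the hypothesis $2m\le\sdimn+3$. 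For $v\in V$ the first variation of every $\gamma_{\adimn}(\Omega_{i})$ and $\gamma_{\adimn}(\Omega_{i}')$ vanishes, so Lemma~\ref{keylemn} applies, giving
$$
\frac{\d^{2}}{\d s^{2}}\Big|_{s=0}\sum_{i=1}^{m}\int_{\R^{\adimn}}1_{\Omega_{i}^{(s)}}(x)T_{\rho}1_{\Omega_{i}^{'(s)}}(x)\gamma_{\adimn}(x)\,\d x=\Big(1-\tfrac{1}{\rho}\Big)\big(A+B\big),
$$
where $A\colonequals\sum_{i<j}\int_{\Sigma_{ij}}\vnormf{\overline{\nabla}T_{\rho}(1_{\Omega_{i}'}-1_{\Omega_{j}'})}\langle v,N_{ij}\rangle^{2}\gamma_{\adimn}(x)\,\d x\ge0$ and $B\colonequals\sum_{i<j}\int_{\Sigma_{ij}'}\vnormf{\overline{\nabla}T_{\rho}(1_{\Omega_{i}}-1_{\Omega_{j}})}\langle v,N_{ij}'\rangle^{2}\gamma_{\adimn}(x)\,\d x\ge0$.

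Finally, since $0<\rho<1$ we have $1-\tfrac1\rho<0$, whereas minimality forces the left-hand side above to be $\ge0$; hence $A=B=0$. Together with the a.e.\ positivity of the gradient norms from Lemma~\ref{lemma7rn}, this yields $\langle v,N_{ij}(x)\rangle=0$ for a.e.\ $x\in\Sigma_{ij}$ and $\langle v,N_{ij}'(x)\rangle=0$ for a.e.\ $x\in\Sigma_{ij}'$, for every $v\in V$ and all $1\le i<j\le m$. Thus the $(\sdimn-2m+3)$-dimensional space $V$ is tangent to every interface of both partitions, so after rotating $\R^{\adimn}$ so that $V=\{0\}^{2m-2}\times\R^{\sdimn-2m+3}$ and discarding Lebesgue-null sets, there exist measurable $\Theta_{i},\Theta_{i}'\subset\R^{2m-2}$ with $\Omega_{i}=\Theta_{i}\times\R^{\sdimn-2m+3}$ and $\Omega_{i}'=\Theta_{i}'\times\R^{\sdimn-2m+3}$, which is the claim.

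I expect the only genuine subtleties to be bookkeeping ones. The first is the sign reversal: minimizing rather than maximizing, together with the eigenvalue $-1/\rho$ of Lemma~\ref{treig2n} in place of the $+1/\rho$ of Lemma~\ref{treig2}, still forces rigidity, because the negative coefficient $1-\tfrac1\rho$ multiplies a nonnegative quadratic form whose sign is pinned by the minimality inequality pointing the opposite way. The second is the dimension count, where two families of sets and two families of volume constraints raise the codimension of $V$ from $m-1$ (as in Theorem~\ref{mainthm1}) to $2m-2$. The genuinely new analytic input---the positive signs in \eqref{nabeq3n} and the a.e.\ positivity of the gradient norms, obtained by pairing an $X'$-linear term (which positive semidefiniteness of $G$ alone does not sign) with an $X$-quadratic term to produce a negative second derivative---is already packaged in Lemma~\ref{lemma7rn} and the remark following it, so it is not an obstacle to the present assembly.
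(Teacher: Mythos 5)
Your proposal is correct and follows essentially the same route as the paper, which itself derives Theorem \ref{mainthm1n} by ``repeating the argument of Theorem \ref{mainthm1}'' with Lemmas \ref{existlemn}--\ref{keylemn} in place of their one-partition counterparts. You have correctly identified and handled the two places where the repetition is not literal: the sign reversal (minimality forces the second variation to be nonnegative while the coefficient $1-\tfrac{1}{\rho}$ is negative, pinning $A=B=0$) and the doubled codimension of $V$ coming from the two families of volume constraints, which is exactly where the hypothesis $2m\leq\sdimn+3$ enters.
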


\medskip

\bibliographystyle{amsalpha}
\bibliography{12162011}

\end{document}